\newtheorem{theorem}{Theorem}[section]
\newtheorem*{theorem*}{theorem}
\newtheorem{proposition}[theorem]{Proposition}
\newtheorem{lemma}[theorem]{Lemma}
\newtheorem{corollary}[theorem]{Corollary}
\theoremstyle{definition}
\newtheorem{definition}[theorem]{Definition}
\newtheorem{remark}[theorem]{Remark}
\def\Z{{\mathbb Z}}
\def\N{{\mathbb N}}
\def\T{{\mathbb T}}
\def\cC{{\mathcal C}}
\def\cM{{\mathcal M}}
\def\cW{{\mathcal W}}
\def\cO{{\mathcal O}}
\def \id {{\rm id}}
\def\Per{{\rm Per}}
\newcommand{\set}[1]{\left\{ #1 \right\}}
\newcommand{\tm}{\subseteq}
\newcommand{\mt}{\supseteq}
\newcommand{\kl}{\left(}
\newcommand{\kr}{\right)}
\newcommand{\ol}[1]{\overline{#1}}
\author[M.I. Cortez]{Mar{\'i}a Isabel Cortez}
\address{Facultad de Matem\'aticas\\ Pontificia Universidad Cat\'olica de Chile\\Avda. Vicuña Mackenna 4860, Macul, Chile }
\email{maria.cortez@uc.cl}
\author[J. Drewlo]{Jamal Drewlo}
\address{Faculty of Mathematic and Computer Science\\Friedrich Schiller University Jena\\Inselplatz 5, 07743 Jena, Germany}
\email{jamal.drewlo@uni-jena.de}
\author[J. G{\'o}mez]{Jaime G{\'o}mez}
\address{Mathematical Institute\\
University of Leiden\\
Einsteinweg 55, 2333 CC, Leiden, Netherlands}
\email{j.a.gomez.ortiz@math.leidenuniv.nl}
\author[T. J{\"a}ger]{Tobias J{\"a}ger}
\address{Faculty of Mathematic and Computer Science\\Friedrich Schiller University Jena\\Inselplatz 5, 07743 Jena, Germany}
\email{tobias.jaeger@uni-jena.de}
\begin{document}

\title[Constructing Toeplitz arrays via cut and project schemes]{Constructing Toeplitz arrays via cut and project schemes} 

\date{}

\begin{abstract}
    We show a one-to-one correspondance between Toeplitz arrays over residually finite topological groups and model sets obtained via specific cut and project schemes, built from the odometer associated to the Toeplitz array. 

    As an application, we construct irregular Toeplitz arrays which are extensions of maximal rank $k$ over their maximal equicontinuous factor (given by the associated odometer) and have exactly $k$ different ergodic measures. A modification of the construction also allows to obtain examples with the same measure-theoretic structure, but infinite maximal rank. 
\end{abstract}

\maketitle 

\section{Introduction}

Toeplitz sequences and their associated Toeplitz flows have been a classical object of study in ergodic theory and topological dynamics for many decades. Introduced by Jacobs and Keane \cite{JK69} (preceded by specific examples by Oxtoby \cite{Ox52}), this class of symbolic systems has been investigated by numerous authors. For instance, Oxtoby used them to demonstrate that a topological dynamical system may be minimal without being uniquely ergodic \cite{Ox52}, and Williams later showed that the set of ergodic measures may have any prescribed cardinality \cite{Wi84}. Going further, Downarowicz showed that any Choquet simplex can be realised as the set of invariant measures of a Toeplitz flow \cite{Dow91}, and in collaboration with Lacroix they proved that any subgroup of the circle may be realised as dynamical point spectrum \cite{DL96}. We refer to \cite{Do05} for an overview and to \cite{Dow88,IL94,MP79} for further results illustrating the richness and wide range of possible dynamical behaviour in this class of symbolic dynamical systems.

More recently, efforts have been made to extend these results to shift actions of more general groups. In particular, Toeplitz arrays over residually finite countable groups have been introduced by Krieger \cite{Kr10} and studied further by Cortez and Petite \cite{CoPe08,CoPe14} (see also \cite{CeCoGo23,CoGo24} for the most recent advances). Apart from their intrinsic interest, such endeavours are well-motivated in the context of aperiodic order and the mathematical theory of quasicrystals. Here, Toeplitz sequences serve as prototypical examples of aperiodic structures with strong long-range order, and higher-dimensional models are much more natural from the physics viewpoint. 
An additional link between the two topics is given by work of Baake, J\"ager and Lenz~\cite{BaJäLe16}, who showed that Toeplitz sequences can also be obtained as model sets via the cut and project method. The latter was originally introduced by Meyer \cite{Mey72} and has since become a fundamental tool for describing Delone sets that model quasicrystals (see \cite{BaLeMo07, DJL, JLO19, LP03, Mo97}). 

The aim of this article is two-fold: first, we extend the result in \cite{BaJäLe16} on the representation of Toeplitz sequences by model sets to the setting of Toeplitz arrays over residually finite countable groups. Thereby, we work in the more general setting of almost automorphic extensions of metric compactifications of the acting group. Secondly, we use recent methods developed for the construction of irregular model sets \cite{FuGlJäOe21, JLO19} in order to provide novel examples of such Toeplitz arrays. \smallskip

To be more precise, we consider cut and project schemes of the form \((G, H, \mathcal{L})\), where \(G\) is an infinite countable group, \(H\) is a compact metrizable group, and the lattice \(\mathcal{L}\) is given by  $\{(g, \tau(g)) : g \in G\}$ with an injective homomorphism \(\tau: G \to H\) whose image is dense in \(H\). In this setting, given $W\subseteq H$, we say that $x\in \{0,1\}^G$ is a model set with respect to the window $W$ if $\{g\in G: x(g)=1\}$ coincides with $\{g\in G: \tau(g)\in W\}$. 
Then we establish a bijective correspondence between almost automorphic subshifts in $\{0,1\}^G$ whose maximal equicontinuous factor is $H$, and model sets associated with windows in $H$ that have special properties. Recall that an element $x \in \{0,1\}^G$ is called {\it almost automorphic on $H$} if there exists a factor map $\beta$, from the orbit closure of $x$ to the equicontinuous system defined by the left action of $G$ on $H$, such that $\beta^{-1}(\{1_H\}) = \{x\}$. Here $1_H$ denotes the unit element of a group $H$. A subset (\textit{window}) $W\subseteq H$ is called \textit{proper} if $\overline{\mathrm{int}(W)}=W$, \textit{irredundant} if $Wh=W$ implies $h=1_H$ and \textit{generic (with respect to $\tau(G)$)} if $\partial W\cap \tau(G)=\emptyset$.  \newpage

\begin{theorem}\label{thm: chara almost automorphic (introduction)}
  Let $G$ be a countable, discrete group, $(H, \tau)$ a metrizable compactification of $G$, and let $\mathcal{W}(H)$ denote the collection of all windows $W \subseteq H$ that are proper, irredundant, and generic. Then the map
\[
\Lambda: \mathcal{W}(H) \to \left\{x \in \{0,1\}^G : x \text{ is almost automorphic on } H\right\}
\]
defined by
\[
\Lambda(W)(g) = 1 \iff \tau(g) \in W, \quad \text{for every } g \in G \text{ and } W \in \mathcal{W}(H),
\]
is well defined and bijective.

Moreover, if $\beta: \overline{O_G(x)} \to H$ is the factor map satisfying $\beta^{-1}(\{1_H\}) = \{x\}$, then the window $W$ associated with $x$ is given by
\[
W = \left\{ \xi \in H : \text{ there exists } y \in \beta^{-1}(\{\xi\}) \text{ such that } y(1_G) = 1 \right\},
\]
and its boundary is
\[
\partial W = W \setminus \left\{ \xi \in H : \forall y \in \beta^{-1}(\{\xi\}), \; y(1_G) = 1 \right\}.
\]
\end{theorem}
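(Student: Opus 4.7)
The plan is to prove the four assertions---well-definedness of $\Lambda$, injectivity, surjectivity, and the boundary formula---by constructing the factor map $\beta$ in both directions and exploiting the interplay between the three window properties and the density of $\tau(G)$ in $H$.

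For well-definedness of $\Lambda$, I fix $W\in\mathcal{W}(H)$, set $x:=\Lambda(W)$, and build $\beta\colon\overline{O_G(x)}\to H$ by sending $y=\lim_n g_n\cdot x$ to an accumulation point of $(\tau(g_n))$, which exists by compactness of $H$. The delicate step is to show this accumulation point is unique, so that $\beta$ is well-defined on $\overline{O_G(x)}$: two distinct accumulation points $\xi_1,\xi_2$ yielding the same $y$ would force the $W$-translates by $\xi_1,\xi_2$ to coincide on the dense set $\tau(G)$, and then properness ($W=\overline{\mathrm{int}(W)}$) implies the two translates are equal as closed sets, after which irredundancy yields $\xi_1=\xi_2$. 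Continuity and $G$-equivariance of $\beta$ are routine. The identity $\beta^{-1}(1_H)=\{x\}$ reduces to the pointwise claim that $\beta(y)=1_H$ forces $y(g)=\mathbf{1}_W(\tau(g))$ for all $g\in G$, which is exactly what genericity ($\partial W\cap\tau(G)=\emptyset$) provides. Injectivity is then immediate: $\Lambda(W_1)=\Lambda(W_2)$ gives $W_1\cap\tau(G)=W_2\cap\tau(G)$, genericity reduces both sides to $\mathrm{int}(W_i)\cap\tau(G)$, and density of $\tau(G)$ combined with properness forces $W_1=W_2$.

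For surjectivity together with the boundary formula, given $x$ almost automorphic on $H$ with factor map $\beta$, I define $W$ as in the statement. Since $\beta$ is continuous and $\{y:y(1_G)=1\}$ is clopen, $W$ is compact and the set $A:=\{\xi:\forall y\in\beta^{-1}(\xi),\;y(1_G)=1\}$ is open; the inclusion $A\subseteq W$ is immediate, and a net argument using continuity of $\beta$ yields $\mathrm{int}(W)\subseteq A$, so $A=\mathrm{int}(W)$ and the boundary formula follows. The identity $\Lambda(W)=x$ rests on the singleton fiber $\beta^{-1}(\tau(g))=\{g\cdot x\}$, which follows from $G$-equivariance together with $\beta^{-1}(1_H)=\{x\}$; evaluating at $1_G$ then gives $\tau(g)\in W \iff x(g)=1$. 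Properness and genericity of $W$ follow from the equality $W\cap\tau(G)=\mathrm{int}(W)\cap\tau(G)=\{\tau(g):x(g)=1\}$ combined with density of $\tau(G)$; for irredundancy, suppose $Wh=W$ and pick $z\in\beta^{-1}(h^{-1})$, so that $Wh=W$ together with genericity forces $z$ and $x$ to agree on all of $G$, contradicting $\beta(z)=h^{-1}\neq\beta(x)$ unless $h=1_H$.

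The technical core, and hence the main obstacle, is the well-definedness of $\beta$ in the forward direction, where all three window properties enter in a subtle combination to rule out multiple accumulation points. The irredundancy check in the surjective direction is also subtle, because $\beta^{-1}(h^{-1})$ need not be a singleton, and one must carefully leverage $Wh=W$ and genericity to compare $z$ with $x$ through the dense subgroup $\tau(G)$.
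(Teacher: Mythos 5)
Your proposal is correct and follows essentially the same route as the paper: the forward direction constructs $\beta$ by sending limits of $g_n\cdot x_W$ to accumulation points of $\tau(g_n)$, with uniqueness forced by the sandwich condition, properness (via $\overline{\tau(G)\cap\mathrm{int}(W)}=W$) and irredundancy, and genericity yielding the trivial fiber over $1_H$; the backward direction defines $W$ by the fiber condition and verifies properness, genericity and irredundancy by the same density arguments through $\tau(G)$. The only places where you are terser than the paper (the uniform-continuity extension of $\beta$ to the orbit closure, and the two-sided inclusion argument in the irredundancy check) are filled in exactly as you indicate.
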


As a consequence, we characterize the Toeplitz subshifts \(X \subseteq \{0,1\}^G\) whose maximal equicontinuous factor is a $G$-odometer \(\overleftarrow{G}\) with group structure, as the model sets arising from irredundant, generic, and proper windows in \(\overleftarrow{G}\). 

\begin{corollary}\label{cor: chara Toeplitz model (introduction)}
    Let $G$ be a residually finite, discrete, countable group and $(\overleftarrow{G},G)$ be a free odometer such that $\overleftarrow{G}$ has a group structure. The following statements are equivalent:
\begin{enumerate}
\item The array $x\in\{0,1\}^G$ is a Toeplitz array such that $(\overleftarrow{G},G)$ is the maximal e\-qui\-con\-ti\-nuous factor of $(\overline{O_G(x)},G)$.


\item The array $x\in\{0,1\}^G$ is the model set associated to a proper, irredundant, generic window $W\subseteq \overleftarrow{G}$.
\end{enumerate}
\end{corollary}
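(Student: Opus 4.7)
The plan is to apply Theorem~\ref{thm: chara almost automorphic (introduction)} with the specific compactification $(H,\tau)=(\overleftarrow{G},\tau)$, where $\tau:G\to\overleftarrow{G}$ is the canonical dense homomorphism produced by the $G$-odometer. Since the odometer is assumed free and $\overleftarrow{G}$ carries a compatible group structure, $\tau$ is an injective homomorphism with dense image, so $(\overleftarrow{G},\tau)$ is indeed a metrizable compactification of $G$ in the sense of the theorem. Under this choice, $\mathcal{W}(\overleftarrow{G})$ is precisely the collection of windows appearing in~(2), and $\Lambda(W)$ is exactly the model-set array described there. Consequently, the corollary reduces to showing that, for a free $G$-odometer with group structure, an array $x\in\{0,1\}^G$ is a Toeplitz array whose maximal equicontinuous factor is $(\overleftarrow{G},G)$ if and only if $x$ is almost automorphic on $\overleftarrow{G}$ in the sense used in Theorem~\ref{thm: chara almost automorphic (introduction)}.

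For $(1)\Rightarrow(2)$, I would assume $x$ is a Toeplitz array with maximal equicontinuous factor $(\overleftarrow{G},G)$, and invoke the classical characterisation of Toeplitz arrays over residually finite countable groups as almost one-to-one extensions of free $G$-odometers (Krieger~\cite{Kr10}, Cortez--Petite~\cite{CoPe08,CoPe14}). This gives an almost one-to-one factor map $\overline{O_G(x)}\to\overleftarrow{G}$; after pre-composing with a suitable translation of $\overleftarrow{G}$ (which does not change the odometer as a $G$-system), we may arrange the singleton fibre to lie above $1_H$ and equal $\{x\}$. Hence $x$ is almost automorphic on $\overleftarrow{G}$, and Theorem~\ref{thm: chara almost automorphic (introduction)} produces a unique $W\in\mathcal{W}(\overleftarrow{G})$ with $x=\Lambda(W)$.

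For $(2)\Rightarrow(1)$, given a proper, irredundant, generic window $W\subseteq\overleftarrow{G}$ and $x=\Lambda(W)$, Theorem~\ref{thm: chara almost automorphic (introduction)} furnishes a factor map $\beta:\overline{O_G(x)}\to\overleftarrow{G}$ with $\beta^{-1}(\{1_H\})=\{x\}$, i.e.\ an almost one-to-one extension of the equicontinuous free odometer. The same classical correspondence then guarantees that every element of $\overline{O_G(x)}$, and in particular $x$ itself, is a Toeplitz array. Moreover, an almost one-to-one extension of a minimal equicontinuous system has that system as its maximal equicontinuous factor, so $(\overleftarrow{G},G)$ is indeed the maximal equicontinuous factor of $\overline{O_G(x)}$.

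The main obstacle is not a new substantial difficulty but the careful book-keeping needed to match the two languages: the almost-automorphy condition in Theorem~\ref{thm: chara almost automorphic (introduction)} pins down a specific fibre over $1_H$, whereas the Toeplitz condition is a purely symbolic periodicity statement about $x$. Bridging the two requires the dictionary between Toeplitz arrays and almost one-to-one extensions of free $G$-odometers from~\cite{CoPe08,CoPe14,Kr10}, together with the rigidity fact that almost one-to-one extensions of minimal equicontinuous systems coincide with their own maximal equicontinuous factors; once these two ingredients are in place, the corollary follows from Theorem~\ref{thm: chara almost automorphic (introduction)} by direct translation.
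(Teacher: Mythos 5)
Your overall strategy coincides with the paper's: both reduce the corollary to Theorem~\ref{thm: chara almost automorphic (introduction)} applied with $H=\overleftarrow{G}$, via the identification ``$x$ is Toeplitz with maximal equicontinuous factor $(\overleftarrow{G},G)$'' $\iff$ ``$x$ is almost automorphic on $\overleftarrow{G}$'', which the paper obtains from Proposition~\ref{MEF_Toeplitz} together with the semicocycle characterization (Proposition~\ref{semicocycles_toeplitz} and Theorem~\ref{thm: chara. Toeplitz semicocycle}); your translation trick for normalising the distinguished fibre to lie over $1_{\overleftarrow{G}}$ is a legitimate way to handle the book-keeping.

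However, in the direction $(2)\Rightarrow(1)$ you justify ``$x$ is Toeplitz'' by claiming that \emph{every} element of $\overline{O_G(x)}$ is a Toeplitz array. That claim is false in general and in fact contradicts the main constructions of the paper: for an irregular window, $\nu$-almost every fibre of $\beta$ has cardinality $k\geq 2$, and by \eqref{Sec2:Toeplitzpoints} the Toeplitz elements of the orbit closure are \emph{exactly} those lying in singleton fibres. So most elements of $\overline{O_G(x)}$ are not Toeplitz. The conclusion you need is still true, but for a different reason: since $\beta^{-1}(\{1_{\overleftarrow{G}}\})=\{x\}$ and $\beta$ is continuous and equivariant, for any neighbourhood $U$ of $x$ there is $n$ with $\beta^{-1}([1_{\overleftarrow{G}}]_n)\subseteq U$, and then $\gamma x\in U$ for all $\gamma\in\Gamma_n$; hence $x$ is regularly recurrent, and Proposition~\ref{semicocycles_toeplitz} (equivalently, the results of \cite{CoPe08}) yields that $x$ is Toeplitz. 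Replace the false justification by this regular-recurrence argument and the proof is complete.
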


This generalizes the characterization of Toeplitz \(\mathbb{Z}\)-subshifts given in \cite{BaJäLe16} to the action of residually finite groups. 
\medskip 

As an application of Theorem~\ref{thm: chara almost automorphic (introduction)}, we adapt and refine methods from \cite{FuGlJäOe21, JLO19} in order to provide novel examples of Toeplitz arrays. These can be seen as a generalisation of William's construction for the case $G=\Z$ in \cite{Wi84}.  

\begin{theorem} \label{thm: existence of k-1 Toeplitz arrays}
Let \( G \) be a countably infinite, amenable and residually finite group. Consider a  $G$-odometer \( \overleftarrow{G} \) with group structure, equipped with its normalized Haar measure $\nu$. Then, for every integer \( k \geq 2 \), there exist an irregular Toeplitz array \( x_0 \in \{0,1\}^G \)  and an almost one-to-one  factor map $\beta: \overline{O_G(x_0)} \to \overleftarrow{G}$ verifying $\beta^{-1}(\{1_{\overleftarrow{G}}\})=\{x_0\}$, such that
\begin{enumerate}
\item For every $\xi\in \overleftarrow{G}$ the fiber $\beta^{-1}(\{\xi\})$ contains at most $k$ elements.
\item For $\nu$-almost every $\xi\in \overleftarrow{G}$ the fiber $\beta^{-1}(\{\xi\})$ contains exactly $k$
elements.
\item The resulting subshift \( (\overline{O_G(x_0)}, G) \) admits exactly \( k \) ergodic invariant probability measures, each of which is measure-theoretically isomorphic to the Haar measure \( \nu \) on \( \overleftarrow{G} \) via the map $\beta$.
\end{enumerate}
\end{theorem}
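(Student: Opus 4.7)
By Corollary~\ref{cor: chara Toeplitz model (introduction)}, constructing an irregular Toeplitz array $x_0$ and the desired factor map $\beta$ is equivalent to constructing a proper, irredundant, generic window $W\subseteq\overleftarrow{G}$; one then sets $x_0=\Lambda(W)$, and Theorem~\ref{thm: chara almost automorphic (introduction)} furnishes $\beta$ with $\beta^{-1}(\{1_{\overleftarrow{G}}\})=\{x_0\}$. The plan is to engineer $W$ so that (a) $\nu(\partial W)>0$, which forces irregularity of $x_0$; (b) the asymptotic structure of $\partial W$ along $\tau(G)$-orbits admits exactly $k$ globally consistent completions of $x_0$ for $\nu$-almost every $\xi$ and at most $k$ for every $\xi$; and (c) the corresponding $k$ completions assemble into $k$ measurable $G$-equivariant sections of $\beta$, from which the $k$ ergodic measures will be obtained by push-forward of $\nu$.

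Fix a decreasing tower of finite-index normal subgroups $\Gamma_n\trianglelefteq G$ with $\bigcap_n\Gamma_n=\{1_G\}$ realising $\overleftarrow{G}=\varprojlim G/\Gamma_n$; amenability provides a compatible F{\o}lner sequence for the density arguments below. Write $\pi_n:\overleftarrow{G}\to G/\Gamma_n$ for the projections and $A_n(g\Gamma_n):=\pi_n^{-1}(g\Gamma_n)$ for the clopen level-$n$ atoms. Adapting Williams' construction~\cite{Wi84} and the irregular model set techniques of~\cite{FuGlJäOe21, JLO19}, I would build $W$ as a limit of clopen sets $W_n$ by an inductive procedure: at stage $n$, partition $G/\Gamma_n$ into a positive part $P_n^+$, a negative part $P_n^-$, and an ambiguity part $U_n$, arranging that $\pi_n^{-1}(P_n^+)$ and $\pi_n^{-1}(P_n^-)$ will eventually lie in $\mathrm{int}(W)$ and $\mathrm{int}(W^c)$ respectively, while $\nu(\pi_n^{-1}(U_n))$ decreases to a prescribed value $c\in(0,1)$. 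Setting $W:=\overline{\bigcup_n \pi_n^{-1}(P_n^+)}$ then yields a proper window with $\partial W\subseteq\bigcap_n\pi_n^{-1}(U_n)$, and forcing $\tau(G)\subseteq\bigcup_n \pi_n^{-1}(P_n^+\cup P_n^-)$ during the induction secures genericity together with $\nu(\partial W)=c>0$. Irredundancy is preserved by the same growth argument that excludes finite-level windows in the limit.

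The $k$-fold fiber structure is built into the inductive step: within each atom of $\pi_n^{-1}(U_n)$, one of $k$ prescribed \emph{templates} dictates how that atom refines at level $n+1$ into children lying in $P_{n+1}^+$, $P_{n+1}^-$, or $U_{n+1}$. The templates are calibrated so that any $y\in\overline{O_G(x_0)}$ with $\beta(y)=\xi$ must globally follow exactly one of the $k$ templates along the orbit $\tau(G)\xi$, in the sense that its values on the ambiguity positions $\{g:\tau(g)\xi\in\partial W\}$ coincide with that template's prescription. This orbit-closure constraint caps $|\beta^{-1}(\xi)|\leq k$ for every $\xi$, giving (1); for $\nu$-a.e.\ $\xi$ the $k$ templates yield $k$ genuinely distinct completions, giving (2); and at $\xi=1_{\overleftarrow{G}}$ the genericity $\partial W\cap\tau(G)=\emptyset$ collapses the fiber to the singleton $\{x_0\}$.

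For (3), the $k$ template-choices define measurable $G$-equivariant sections $y_1,\ldots,y_k:\overleftarrow{G}\to\overline{O_G(x_0)}$ of $\beta$ (defined $\nu$-a.e.), and we set $\mu_i:=(y_i)_*\nu$. Each $\mu_i$ is $G$-invariant by equivariance, ergodic because $\nu$ is, and $\beta$ restricted to the invariant set $y_i(\overleftarrow{G})$ realises a measure-theoretic isomorphism onto $(\overleftarrow{G},\nu)$. Any ergodic measure on $\overline{O_G(x_0)}$ projects to $\nu$ by unique ergodicity of the odometer, so its disintegration is supported on $\{y_1(\xi),\ldots,y_k(\xi)\}$; $G$-equivariance together with ergodicity of $\nu$ forces the conditional weights to be constant in $\xi$, so the extremal invariant measures are exactly $\mu_1,\ldots,\mu_k$. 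The main obstacle throughout is the simultaneous combinatorial control of all four requirements on $W$ — properness, irredundancy, the delicate genericity $\partial W\cap\tau(G)=\emptyset$ while keeping $\nu(\partial W)>0$, and the exact $k$-fold template structure — within a single inductive scheme, with the templates calibrated so that no additional completions survive and none of the $k$ intended completions collapses on a set of positive $\nu$-measure.
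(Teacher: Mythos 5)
Your overall strategy --- reduce to constructing a proper, irredundant, generic window with $\nu(\partial W)>0$ via the model-set correspondence, control the fibres combinatorially, and obtain the $k$ ergodic measures as push-forwards of $\nu$ under measurable equivariant sections --- is exactly the paper's strategy, and your argument for part (3) (constant conditional weights by ergodicity of $\nu$, hence the extremal measures are the $\mu_i$) matches what the paper delegates to \cite[Lemma 3.6]{HLSY21}. However, there is a genuine gap at the heart of the proposal: the ``template'' mechanism that is supposed to deliver (1) and (2) is asserted, not constructed, and this is precisely where all the difficulty sits. Two separate problems are hidden there. First, the \emph{upper} bound $\#\beta^{-1}(\{\xi\})\leq k$ requires showing that the values of any $y\in\beta^{-1}(\{\xi\})$ on the ambiguity set $\{g:\tau(g)\xi\in\partial W\}$ are rigidly coupled; the paper achieves this by introducing the relation $\preccurlyeq_\xi$ on $\partial(W\xi^{-1})\cap\tau(G)$ and proving Lemma~\ref{lem: if l_1 in Gamma then l_2} ($\tau(g_1)\preccurlyeq_\xi\tau(g_2)$ and $y(g_1)=1$ force $y(g_2)=1$), so that a totally ordered chain of $k-1$ similarity classes caps the fibre at $k$ (Proposition~\ref{prop: k-self similar with total order implies k+1 elements in fiber}). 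Your proposal contains no analogue of this coupling, and without it nothing prevents additional completions from surviving in the orbit closure. Second, the \emph{lower} bound is not automatic either: a candidate array satisfying the sandwich condition $\tau(G)\cap\mathrm{int}(W)\xi^{-1}\subseteq\{\tau(g):y(g)=1\}\subseteq\tau(G)\cap W\xi^{-1}$ need not lie in $\overline{O_G(x_0)}$ at all. The paper needs the membership criterion of Proposition~\ref{prop: criterion being in fiber} (non-emptiness of $\overline{T_W(N,M)\cap\tau(G)}$ near $\xi$ for all finite $N,M$) together with Lemma~\ref{lem: partition yields fiber element} to certify that each of the $k$ intended completions is genuinely in the fibre. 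Your proposal does not address this verification step.

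Moreover, the construction of a window with the required rigid boundary structure is not a routine refinement argument: establishing that boundary points in the same class are similar requires the non-commutative expansion and carry-over machinery of Section~\ref{subsect: expansion} together with the F\o lner condition $K_n\cdot C_n\subseteq D_n$ (Lemma~\ref{perfectly-self-similar} and Proposition~\ref{prop: W_perf}), and the strict ordering between classes is then produced by removing different families of cylinder sets over a clopen partition $\{H_1,\ldots,H_k\}$ while keeping $\partial W$ fixed (Lemma~\ref{lem: modified window boundary}, Lemma~\ref{self-similar-k}). You correctly identify ``simultaneous combinatorial control of all four requirements'' as the main obstacle, but the proposal stops exactly there; as written it is a plausible plan rather than a proof, and the missing ingredient is the self-similarity framework (or some equivalent rigidity mechanism) that turns the template picture into actual upper and lower bounds on the fibres.
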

We note that examples of Toeplitz arrays with a similar measure-theoretic structure have been constructed in \cite{CeCoGo23}. However, the techniques we employ to prove Theorem~\ref{thm: existence of k-1 Toeplitz arrays} differ entirely from those in \cite{CeCoGo23}. In particular, they allow for a precise control of the fibre structure and cardinality, and in contrast to \cite{CeCoGo23} our approach does not require any modification of the alphabet size to construct Toeplitz subshifts with the desired properties. Moreover, a modification of these new techniques allow us to construct Toeplitz subshifts with the same measure-theoretic structure as in the previous theorem, but with almost all fibers being infinite, as stated in the following result. 

\begin{theorem} \label{thm: existence of k-1 Toeplitz arrays with infinite max rank}
    Let \( G \) be a countably infinite, amenable and residually finite group. Consider a  $G$-odometer \( \overleftarrow{G} \) with group structure, equipped with its normalized Haar measure $\nu$. Then, for every $k\geq 2$, there exists an irregular Toeplitz array $x_0\in \{0,1\}^G$ and an almost one-to-one factor map $\beta: \ol{O_G(x_0)}\to \overleftarrow{G}$ which still satisfies (3) as in Theorem~\ref{thm: existence of k-1 Toeplitz arrays}, but has (countably) infinite maximal rank. Moreover, \mbox{$\nu$-almost} every fiber is of countably infinite cardinality. 
\end{theorem}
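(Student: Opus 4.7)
The plan is to adapt the window construction underlying Theorem~\ref{thm: existence of k-1 Toeplitz arrays} so that the $\tau(G)$-orbit of a generic point intersects $\partial W$ infinitely often, while preserving the existence of exactly $k$ ergodic measures. By Corollary~\ref{cor: chara Toeplitz model (introduction)}, producing the desired Toeplitz array reduces to exhibiting a proper, irredundant and generic window $W\subseteq \overleftarrow{G}$ whose boundary behaviour under $\tau(G)$ realises the prescribed measure-theoretic and combinatorial structure, since by Theorem~\ref{thm: chara almost automorphic (introduction)} the fiber cardinalities of $\beta$ are encoded in the way $\partial W$ meets the orbits of $\tau(G)$.

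Starting from the window $W^{(k)}$ produced in the proof of Theorem~\ref{thm: existence of k-1 Toeplitz arrays}, whose generic orbits hit $\partial W^{(k)}$ only finitely often and yield fibers of size $k$, I would enrich the boundary at each inductive stage $n$ by attaching a countable family of additional pieces placed on Haar-null sets but with large enough density to guarantee, through a Borel--Cantelli-type estimate in the tower decomposition of $\overleftarrow{G}$, that the orbit of $\nu$-a.e.\ $\xi\in\overleftarrow{G}$ meets infinitely many of them. At each stage the added pieces are chosen so that their closures avoid the countable set $\tau(G)$ (preserving genericity), introduce no translational stabiliser (preserving irredundancy $Wh=W\Rightarrow h=1_H$), and leave the open interior intact (preserving properness $\overline{\mathrm{int}(W)}=W$). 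The resulting window has boundary of Haar measure zero, yet visited infinitely often by generic orbits.

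The main obstacle is then to argue that the number of ergodic invariant probability measures on $\overline{O_G(x_0)}$ remains exactly $k$, in spite of the now infinite maximal rank. Since every $G$-invariant probability measure on $\overline{O_G(x_0)}$ projects under $\beta$ to the unique Haar measure $\nu$ on $\overleftarrow{G}$, I would disintegrate any ergodic measure $\mu$ along $\beta$ and show that, because the enriching boundary is $\nu$-null, the fiber measures $\mu_\xi$ must be concentrated, $\nu$-a.e., on the same $k$ distinguished ``macroscopic branches'' of the fiber tree inherited from the original window $W^{(k)}$. The countably many additional preimages produced by the enrichment are not distinguishable in a $G$-invariant way and therefore cannot carry any ergodic invariant mass. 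This step requires a careful coupling between the disintegration and the inductive construction, and is the technically delicate part of the argument.

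Finally, countable infinity of $\nu$-a.e.\ fiber is immediate from the infinitely many orbit-boundary intersections, each of which splits the fiber further. Irregularity of $x_0$ follows since $\beta$ is no longer one-to-one on a positive-measure set, while by construction $\beta^{-1}(\{1_{\overleftarrow{G}}\})=\{x_0\}$ is maintained, so $\beta$ remains almost one-to-one. Property (3) of Theorem~\ref{thm: existence of k-1 Toeplitz arrays} for the new system then follows from the disintegration argument above, as each of the $k$ ergodic measures corresponds to a measurable selection onto one of the $k$ macroscopic branches and is thereby measure-theoretically isomorphic to $\nu$ via $\beta$.
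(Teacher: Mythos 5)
There is a genuine gap, and it stems from a misreading of what controls fiber cardinality in this framework. You assert that for $W^{(k)}$ ``generic orbits hit $\partial W^{(k)}$ only finitely often and yield fibers of size $k$'', and you then try to force infinite fibers by making a.e.\ orbit meet the boundary infinitely often. But in the construction of $W^{(k)}$ the boundary already has \emph{positive} Haar measure, and by the pointwise ergodic theorem the $\tau(G)$-orbit of $\nu$-a.e.\ $\xi$ already meets $\partial W^{(k)}\xi^{-1}$ infinitely often (indeed it meets each $\partial W^{(k)}\cap H_j$ infinitely often); the fibers are nevertheless finite because all those infinitely many boundary points fall into only $k$ totally ordered similarity classes, and by Lemma~\ref{lem: if l_1 in Gamma then l_2} and Proposition~\ref{prop: k-self similar with total order implies k+1 elements in fiber} it is the number and order structure of similarity classes --- not the number of boundary intersections --- that determines $\#\beta^{-1}(\{\xi\})$. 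Consequently your closing claim that ``each of the infinitely many orbit--boundary intersections splits the fiber further'' is false, and the mechanism you propose cannot by itself produce infinite fibers.

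The concrete construction you sketch also fails on its own terms. Pieces of boundary ``placed on Haar-null sets'' are invisible to a.e.\ orbit: $\bigcup_{g\in G}\tau(g)^{-1}A$ is a countable union of null sets when $\nu(A)=0$, so a Borel--Cantelli argument cannot deliver infinitely many visits --- it delivers none, for a.e.\ $\xi$. Moreover, if the resulting window really had ``boundary of Haar measure zero'' it would be regular, hence by Proposition~\ref{prop: regular Toeplitz flows} the subshift would be uniquely ergodic, contradicting the required $k\geq 2$ ergodic measures. The paper instead keeps the boundary untouched ($\partial\widetilde{W}^{(k)}=\partial W_{\mathrm{perf}}$, of positive measure) and deletes one cylinder $E_n\subseteq X_n$ from the \emph{interior} for each $n\in N_k$; this destroys the mutual similarity of the boundary points lying over $H_k$, so that each of the infinitely many elements of $S_k^\xi$ becomes its own incomparable singleton similarity class (Lemma~\ref{lem: order structure tilde(W)}), yielding the infinitely many fiber elements $\widetilde{x}_{k,l}$. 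Your disintegration argument for the count of ergodic measures is also unnecessarily delicate: in the paper's construction each $\widetilde{x}_{k,l}$ differs from $\widetilde{x}_k$ in a single coordinate, so the new fiber elements are asymptotic to $\widetilde{x}_k$ and the measure-theoretic structure is unchanged for essentially free.
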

{\textit{ Structure of the article.}}\quad We first introduce basic notions and preliminaries in Section~\ref{Background}. In Section~\ref{odometers} we discuss $G$-odometers for countably infinite residually finite groups $G$, while Section~\ref{sec: Toeplitz} provides the required background on Toeplitz arrays and their relation to $G$-odometers. In Section~\ref{sec: almost auto as model set}, we then turn to cut and project to provide the proof of Theorem~\ref{thm: chara almost automorphic (introduction)}. As mentioned, we work in the abstract setting of metric compactifications of groups in this context.
However, since a $G$-odometer with group structure naturally provides a metric compactification of $G$, the application to Toeplitz flows is immediate. In Section~\ref{sec: self similarity}, we introduce the concept of self-similar windows of cut-and-project scheme and show how this allows to obtain a precise control of the fibre structure and cardinality of the resulting model sets. This is applied in Section~\ref{sect: self-similar windows}, where we provide an explicit construction of windows with the  respective self-similarity properties and thereby provide a proof of Theorem~\ref{thm: existence of k-1 Toeplitz arrays}. The necessary modifications in order to obtain Theorem~\ref{thm: existence of k-1 Toeplitz arrays with infinite max rank} are then discussed in Section~\ref{sect: infinite max rank}.

\section{Definitions and background}\label{Background}
By a \textbf{dynamical system} we mean a pair $(X, G)$, where $X$ is a compact metric space and $G$ is a countably infinite group acting on $X$ by homeomorphisms. We say that $(X,G)$ is a \textbf{Cantor system} if $X$ is a Cantor set. For each $g \in G$ and $x \in X$, the action of $g$ on $x$ is denoted by $g \cdot x$ (or simply $gx$). Given a subset $A \subseteq X$, we define $g \cdot A = \{g \cdot x : x \in A\}$. A subset $A \subseteq X$ is said to be \textbf{$G$-invariant} if $g \cdot A = A$ for every $g \in G$. The \textbf{orbit} of a point $x \in X$ under the action of $G$ is the set $O_G(x) = \{g \cdot x : g \in G\}$. The dynamical system $(X, G)$ is said to be \textbf{minimal} if the orbit $O_G(x)$ is dense in $X$ for every $x \in X$. Equivalently, minimality means that the only non-empty closed $G$-invariant subset of $X$ is $X$ itself. The dynamical system $(X,G)$ is \textbf{free} if the stabilizer of every point is trivial i.e., $gx=x$ implies $g=1_G$ for every $x\in X$.

 An \textbf{invariant probability measure} of $(X,G)$ is a Borel probability measure $\mu$ on $X$ such that $\mu(g\cdot A)=\mu(A)$, for every $g\in G$ and every Borel subset $A$ of $X$. We denote by $\mathcal{M}(X,G)$ the set of all invariant probability measures of $(X,G)$. It is known that  $G$ is amenable if and only if for every dynamical system $(X,G)$ the set $\mathcal{M}(X,G)$ is non-empty (see, for example, \cite{KL16}). If $\mathcal{M}(X,G)$ has only one element, we say that $(X,G)$ is uniquely ergodic.

Let $(X, G)$ and $(Y, G)$ be dynamical systems. A \textbf{factor map} from $(X, G)$ to $(Y, G)$ is a continuous surjective map $\varphi: X \to Y$ such that $\varphi(gx) = g\varphi(x)$ for all $ x \in X$  and  $g \in G$. The systems $(X, G)$ and $(Y, G)$ are said to be \textbf{conjugate} if there exists a bijective factor map $\varphi: X \to Y$. In this case, $\varphi$ is called a \textbf{conjugacy}. A factor map $\varphi: X \to Y$ is said to be \textbf{almost one-to-one} if the set $\{x \in X : \varphi^{-1}(\{\varphi(x)\}) = \{x\}\}$ is dense in $X$. In that case, we say that $(X, G)$ is an \textbf{almost one-to-one extension} of $(Y, G)$.

\subsection{Almost automorphic systems and equicontinuity}\label{sec_equicontinuous} A dynamical system $(X, G)$ is said to be \textbf{equicontinuous} if for every $\varepsilon > 0$, there exists $\delta > 0$ such that whenever $d(x, y) \leq \delta$, we have $d(g \cdot x, g \cdot y) \leq \varepsilon$  for all  $g \in G.$ Every minimal equicontinuous system $(X, G)$ is conjugate to a system of the form $(E/F, G)$, where $E$ is a compact topological group, $F$ is a closed subgroup of $E$, and there exists a homomorphism $\varphi: G \to E$ such that $\varphi(G)$ is dense in $E$. The action of $G$ on the quotient space $E/F$ is given by
$g \cdot (hF) = \varphi(g) h F,$ for all  $g \in G$  and  $hF \in E/F$ (see~\cite[Theorem 3, Chapter 3]{Au88}). Every topological dynamical system $(X,G)$ on a compact metric space has a  \textbf{maximal equicontinuous factor (MEF)}, that is, a factor $(Y,G)$ with factor map $\pi:X\to Y$ such that for every other equicontinuous factor $(Z,G)$ of $(X,G)$ with factor map $\varphi:X\to Z$ there exists a factor map $\psi:Y\to Z$ such that $\varphi=\psi\circ \pi$ \cite{Au88,Do05}. 

A \textbf{metric compactification} of a group $G$ is a pair $(H, \tau)$, where $H$ is a compact metrizable group and $\tau: G \to H$ is an injective homomorphism such that $\tau(G)$ is dense in $H$. We will also refer to $H$ as a metric compactification of $G$. Without loss of generality, we may assume that $H$ is equipped with a bi-invariant metric inducing its topology; that is, a metric $d$ satisfying $d(ha, hb) = d(a, b) = d(ah, bh)$, for all  $a, b, h \in H$ (see~\cite[Proposition 8.43]{HofMo20}).
The (discrete countable) groups that admit metric compactifications are called \textbf{maximally almost periodic}, and are precisely those that admit (minimal) free, equicontinuous dynamical systems. Indeed, if $(H, \tau)$ is a metric compactification of $G$, then $G$ acts naturally on $H$ by left multiplication, defined by $g \cdot h = \tau(g)h$, for all $g \in G,\, h \in H$. Since each $g \in G$ acts as an isometry, the resulting dynamical system $(H, G)$ is equicontinuous. The density of the orbit of the identity element $1_H$ ensures minimality, and the injectivity of $\tau$ guarantees that the action is free. Conversely, if $(X, G)$ is a minimal, equicontinuous, and free dynamical system, then the associated Ellis semigroup (which is a group in this case) provides a metric compactification of $G$ (see~\cite{Au88}).

A group $G$ is said to be \textbf{residually finite} if, for every non-identity element $g \in G$, there exists a finite group $F$ and a homomorphism $\phi: G \to F$ such that $\phi(g) \neq 1_F$. It is known that countable residually finite groups are precisely those that admit totally disconnected metric compactifications (see~\cite[Lemma~2.1]{CeCoGo23}).  On the other hand, it is known that a finitely generated group is maximally almost periodic if and only if it is residually finite.

A topological dynamical system is said to be \textbf{almost automorphic} if its maximal equicontinuous factor is minimal and the associated factor map $\pi$ is almost one-to-one. Every almost automorphic system contains a unique minimal system (see \cite{FK18}). In general, if $(X,G)$ is any dynamical system, a point $x\in X$   is referred to as \textbf{almost automorphic on $Y$} if 
the maximal equicontinouous factor  of $(\overline{O_G(x)},G)$ is $(Y,G)$, and the fiber of $x$ under the factor map $\pi$ is trivial; that is, $\pi^{-1}(\{\pi(x)\}) = \{x\}$.  

In this work, we concentrate on almost automorphic systems $(X,G)$ whose maximal equicontinuous factor arises from the natural action of the group $G$ on a metric com\-pac\-ti\-fi\-ca\-tion of itself.

\section{Odometers.}\label{odometers}
Let $G$ be an infinite countable group. The group $G$ is residually finite if and only if there exists a decreasing sequence $(\Gamma_n)_{n \in \mathbb{N}}$ of finite-index subgroups of $G$ whose intersection is trivial. Moreover, since every finite-index subgroup contains a finite-index normal subgroup, it follows that $G$ is residually finite if and only if there exists such a sequence $(\Gamma_n)_{n \in \mathbb{N}}$ in which each $\Gamma_n$ is normal (see \cite{CeCo18} for more details).

 \subsection{Representation of odometers} \label{subsect: Representation of odometers}
 
Let $(\Gamma_n)_{n\in\mathbb{N}}$ be a decreasing sequence of finite index subgroups of $G$. 
The {\bf $G$-odometer} associated to $(\Gamma_n)_{n\in\mathbb{N}}$ is defined as the inverse limit of the inverse system $(G/\Gamma_n,\phi_n)$, where $\phi_n:G/\Gamma_{n+1}\to G/\Gamma_n$ is the canonical projection, for every $n\in \mathbb{N}$. In other words, we let 
\begin{align*}
    \overleftarrow{G}:=\varprojlim_n(G/\Gamma_n,\phi_n)=\left\{\left.(g_n\Gamma_n)_{n\in\mathbb{N}}\in\prod_{n\in\mathbb{N}}G/\Gamma_n\ \right| \:\forall n \in \N:\: g_{n+1}\Gamma_n=g_n\Gamma_n\right\},
\end{align*}
where $G/\Gamma_n$ denotes the set of left cosets of $\Gamma_n$ in $G$, for every $n\in\mathbb{N}$.
If every set $G/\Gamma_n$ is endowed with the discrete topology and $\prod_{n\in\mathbb{N}}G/\Gamma_n$ with the product topology, we obtain that $\overleftarrow{G}$ with the topology inherited from $\prod_{n\in\mathbb{N}}G/\Gamma_n$ is metrizable and totally disconnected. 
Note that $\overleftarrow{G}$ is a Cantor set when it is infinite, and this is the case when there exists an increasing sequence $(n_k)_{k\in\mathbb{N}}$ in $\mathbb{N}$ such that $\Gamma_{n_{k+1}}$ is strictly contained in $\Gamma_{n_k}$, for every $k\in \mathbb{N}$. The topology in $\overleftarrow{G}$ is induced by the metric $d$ defined as follows: for $x=(g_n\Gamma_n)_{n\in\mathbb{N}}$ and $y=(h_n\Gamma_n)_{n\in\mathbb{N}}$ in $\overleftarrow{G}$,   we set $d(x,y)=0$ if $x=y$, otherwise
\begin{equation}\label{eq: metric odometer}
d(x,y)=\frac{1}{2^{\min\{k\in\mathbb{N}: g_k\Gamma_k\neq h_k\Gamma_k\}}}.
\end{equation}
According to this metric, for every $k\geq 0$, the ball of radius $\frac{1}{2^k}$ and center $x=(g_n\Gamma_n)_{n\in\mathbb{N}}$ in $\overleftarrow{G}$ is given by
$$
B_{\frac{1}{2^k}}(x)=\{(h_n\Gamma_n)_{n\in\mathbb{N}}\in \overleftarrow{G}: h_k\Gamma_k=g_k\Gamma_k\}.
$$
We also use the notation $[x]_k$ instead of $B_\frac{1}{2^k}(x)$ and refer to these sets as \textbf{cylinder sets}.

The group $G$ acts on $\overleftarrow{G}$ by pointwise left multiplication, i.e., $g\cdot(g_n\Gamma_n)_{n\in\mathbb{N}}=(gg_n\Gamma_n)_{n\in\mathbb{N}}$ for every $g\in G$ and $(g_n\Gamma_n)_{n\in\mathbb{N}} \in \overleftarrow{G}$. Observe that the homeomorphism induced by the action of $g\in G$ is an isometry with respect to $d$, which implies that the system $(\overleftarrow{G},G)$ is equicontinuous. Furthermore, since the orbit of $x_0=(1_G\Gamma_n)_{n\in\mathbb{N}}$ is dense in $\overleftarrow{G}$, we conclude that the system $(\overleftarrow{G},G)$ is minimal (see \cite[Page 37, Lemma 3]{Au88}). The minimal equicontinuous system $(\overleftarrow{G},G)$ is also referred to as \textbf{$G$-odometer}. If $\mu$ is an invariant probability measure of $(\overleftarrow{G},G)$, then  
\begin{equation}\label{measure_definition}
\mu\left(B_{\frac{1}{2^k}}(x) \right)=\frac{1}{[G:\Gamma_k]}, \mbox{ for every } k\geq 0 \mbox{ and } x\in \overleftarrow{G}.
\end{equation}
Since every clopen subset of $\overleftarrow{G}$ is a finite disjoint union of these balls, we deduce that $\mu$ is completely determined by   (\ref{measure_definition}). On the other hand, every measure $\mu$ that satisfies (\ref{measure_definition}) extends to a unique invariant probability measure, showing  that $(\overleftarrow{G},G)$ is uniquely ergodic (see \cite{CoPe08} for more details). The infinite $G$-odometers correspond exactly to minimal equicontinuous actions of $G$ on the Cantor set (see \cite[Proposition 12]{CoGo24}).
 


If we assume that every $\Gamma_n$ is a normal subgroup, then every $G/\Gamma_n$ is a group, which implies that $\overleftarrow{G}$ is a subgroup of $\prod_{n\in\mathbb{N}}G/\Gamma_n$. In this case, the metric $d$ defined above is bi-invariant and the map $\tau:G\to \overleftarrow{G}$ given by $\tau(g)=(g\Gamma_n)_{n\in\mathbb{N}}$ for every $g\in G$, is a group homomorphism such that $\tau(G)$ is dense in $\overleftarrow{G}$. If in addition $\bigcap_{n\in\mathbb{N}}\Gamma_n=\{1_G\}$, then $\tau(G)$ is isomorphic to $G$ and  we call the odometer $(\overleftarrow{G},G)$ {\bf free}.
In this case, when $\overleftarrow{G}$ has group structure and $\tau$ is one-to-one, then ($\overleftarrow{G},\tau)$ is a  totally disconnected metric compactification of $G$. Conversely, every totally disconnected metric compactification of $G$ is a $G$-odometer with group structure, as shown in \cite{CeCoGo23}.

\begin{proposition}[{\cite[Lemma 2.1]{CeCoGo23}}]\label{prop: totally disconnected compactification is odometer}
    Let $G$ be a residually finite group. 
    If $\overleftarrow{G}$ is a totally disconnected metric compactification of $G$, then there exists a decreasing sequence $(\Gamma_n)_{n\in\mathbb{N}}$ of normal subgroups of finite index of $G$ with trivial intersection such that $\overleftarrow{G}$ is conjugate to the $G$-odometer associated to $(\Gamma_n)_{n\in\mathbb{N}}$. 
\end{proposition}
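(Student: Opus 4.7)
The plan is to exploit the structure theory of profinite groups. Since $\overleftarrow{G}$ is compact, metrizable, Hausdorff and totally disconnected, it is a profinite group and therefore admits a neighborhood base at the identity consisting of clopen subgroups. A standard compactness argument (averaging $U$ over conjugates, using that $\overleftarrow{G}/U$ is finite) shows that each such neighborhood in fact contains a clopen \emph{normal} subgroup, and metrizability allows us to choose a decreasing sequence $(U_n)_{n\in\N}$ of clopen normal subgroups of $\overleftarrow{G}$ that forms a neighborhood base at $1_{\overleftarrow{G}}$ and satisfies $\bigcap_n U_n=\{1_{\overleftarrow{G}}\}$.

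Given such a sequence, I would define $\Gamma_n:=\tau^{-1}(U_n)$. Normality of $U_n$ in $\overleftarrow{G}$ together with the homomorphism property of $\tau$ immediately give that $\Gamma_n$ is normal in $G$, and $\Gamma_{n+1}\subseteq \Gamma_n$ by construction. To see that $[G:\Gamma_n]<\infty$, note that $\tau$ induces an injective group homomorphism $G/\Gamma_n\hookrightarrow \overleftarrow{G}/U_n$, and this map is also surjective: for any $h\in \overleftarrow{G}$, the open set $hU_n$ meets the dense subgroup $\tau(G)$, so $hU_n=\tau(g)U_n$ for some $g\in G$. Hence $G/\Gamma_n\cong \overleftarrow{G}/U_n$, which is finite because $\overleftarrow{G}$ is compact and $U_n$ is open. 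Moreover, $g\in\bigcap_n \Gamma_n$ means $\tau(g)\in\bigcap_n U_n=\{1_{\overleftarrow{G}}\}$, and since $\tau$ is injective (as $(\overleftarrow{G},\tau)$ is a compactification) we get $g=1_G$.

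It remains to produce a $G$-equivariant homeomorphism $\Phi:\overleftarrow{G}\to\varprojlim_n(G/\Gamma_n,\phi_n)$. Because the $U_n$ form a neighborhood base at the identity, the canonical map $\overleftarrow{G}\to\varprojlim_n \overleftarrow{G}/U_n$ is a topological group isomorphism (this is the standard profinite reconstruction: it is a continuous homomorphism from a compact space, injective by $\bigcap_n U_n=\{1\}$, and surjective by a straightforward diagonal/compactness argument). Composing with the inverse limit of the isomorphisms $\overleftarrow{G}/U_n\to G/\Gamma_n$ from the previous paragraph yields $\Phi$. For any $g\in G$, both actions on the respective spaces correspond to left multiplication by $\tau(g)$ respectively by the coherent sequence $(g\Gamma_n)_n$, so $\Phi$ intertwines them.

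The main subtlety is the first step: producing a countable decreasing base of clopen \emph{normal} subgroups at $1_{\overleftarrow{G}}$. Everything else is essentially bookkeeping with inverse limits and density of $\tau(G)$. Once this structural fact about profinite metrizable groups is in hand, the rest of the argument proceeds cleanly by transferring the filtration $(U_n)$ from $\overleftarrow{G}$ to $G$ via $\tau^{-1}$ and identifying the two inverse limits.
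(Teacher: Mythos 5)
Your argument is correct. Note that the paper does not actually prove this proposition; it is quoted verbatim from \cite[Lemma 2.1]{CeCoGo23}, so there is no in-text proof to compare against. Your route is the standard one for such a statement: identify $\overleftarrow{G}$ as a profinite group, extract a decreasing neighborhood base $(U_n)_{n\in\N}$ of clopen normal subgroups at the identity (using the finite-index core argument to pass from open subgroups to open normal subgroups, and metrizability to get a countable decreasing base with trivial intersection), pull back via $\tau$ to obtain the $\Gamma_n$, and identify $\overleftarrow{G}\cong\varprojlim \overleftarrow{G}/U_n\cong\varprojlim G/\Gamma_n$ equivariantly. All the individual steps check out: normality and finite index of $\Gamma_n=\tau^{-1}(U_n)$ follow as you say (density of $\tau(G)$ plus openness of $U_n$ gives surjectivity of $G/\Gamma_n\to\overleftarrow{G}/U_n$), triviality of $\bigcap_n\Gamma_n$ uses injectivity of $\tau$, and the profinite reconstruction map is a topological group isomorphism by the usual compactness argument. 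The only point worth making explicit in a written version is the compatibility of the isomorphisms $\overleftarrow{G}/U_n\cong G/\Gamma_n$ with the bonding maps of the two inverse systems, which is immediate from $U_{n+1}\subseteq U_n$ and $\Gamma_{n+1}\subseteq\Gamma_n$, and the observation that the resulting homeomorphism intertwines left multiplication by $\tau(g)$ with the odometer action, which you do address.
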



\subsection{An analogue of $p$-adic expansion for residually finite groups} \label{subsect: expansion}
Recall that for a fixed number $p\in\N$, any positive integer $n\in\N$ has a unique $p$-adic expansion 
\begin{equation}\label{e.p-adic_expansion}
n = \sum_{j=1}^\infty n_jp^{j-1},
\end{equation} where $n_j \in \set{0,\ldots,p-1}$ and $n_j=0$ for all but finitely many $j$. If we set $D_n = \set{0,\ldots, p^n-1}$ and $\Gamma_n = p^n\Z$, then $D_j$ is a fundamental domain of $\Z/\Gamma_j$ and $\pi_j(n)=n_jp^{j-1}$is an element of $D_j \cap \Gamma_{j-1}$. The purpose of this subsection is to provide an analogue of this expansion in the setting of residually finite countable groups, which will be needed in the later sections. 
In particular, we will need to have a good understanding of the respective carry-over rule, which is slightly more intricate in the non-commutative setting. 
\medskip 

Suppose that $G$ is an infinite, countable, amenable, residually finite group and let $(\Gamma_n)_{n\in\N}$ be a sequence of normal finite index subgroups $\Gamma_n\leq G$ such that $\bigcap_{n\in\N} \Gamma_n = \{1_G\}$.
We set $\Gamma_0=G$. Our aim is to establish a unique extension of group elements $g\in G$ of the form  ``$g = \prod_{j=1}^\infty \pi_j(g)$'' with $\pi_j(g) \in D_j \cap \Gamma_{j-1}$, where the $D_j$ are suitably chosen fundamental domains of $G/\Gamma_j$. Thereby, we rely on the following result from \cite{CeCoGo23}.

\begin{lemma}[{\cite[Lemma 2.9]{CeCoGo23}}]\label{lem: ex. of seq. of fundamental domains}
Let $G$ be an infinite residually finite group, and let $(\Gamma_n)_{n\in \mathbb{N}}$ be a   decreasing sequence of finite index normal subgroups of $G$ with trivial intersection.     Then there exist  an increasing sequence $(n_i)_{i\in \mathbb{N}}\subseteq \mathbb{N}$ and a sequence $(D_i)_{i\in \mathbb{N}}$  of finite subsets of $G$, such that for every $i\in \mathbb{N}$
\begin{enumerate}
    \item[$\mathrm{(D1)}$] $1_G\in D_1$ and $D_i\subseteq D_{i+1}$.
    \item[$\mathrm{(D2)}$] $D_i$ contains exactly one element of each class in $G/\Gamma_{n_i}$.
    \item[$\mathrm{(D3)}$] $G= \bigcup_{i\in \mathbb{N}} D_i$.
  \item[$\mathrm{(D4)}$] $D_{i+1}=\bigcup_{v\in D_{i+1}\cap \Gamma_{n_j}}D_jv$, for every $j\leq i$.  

  \end{enumerate}
  \end{lemma}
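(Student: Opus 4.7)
My plan is to construct the sequences $(n_i)$ and $(D_i)$ recursively, with $D_{i+1}$ obtained from $D_i$ by multiplying on the right with a carefully chosen fundamental domain $T_i$ of $\Gamma_{n_i}/\Gamma_{n_{i+1}}$. The normality of each $\Gamma_n$ will be what makes this ``product of two fundamental domains'' into a new fundamental domain of $G/\Gamma_{n_{i+1}}$, which is the core algebraic fact driving the argument.

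First I would fix an enumeration $G=\{g_1,g_2,\ldots\}$, set $n_1=1$, and take $D_1$ to be any fundamental domain of $G/\Gamma_{n_1}$ with $1_G\in D_1$. Inductively, assuming $n_1<\cdots<n_i$ and $D_1\subseteq\cdots\subseteq D_i$ have been built, I would pick the first $g_k\notin D_i$ in the enumeration and write it uniquely as $g_k=dv$ with $d\in D_i$ and $v=d^{-1}g_k\in\Gamma_{n_i}$; since $v\neq 1_G$ and $\bigcap_n\Gamma_n=\{1_G\}$, I can choose $n_{i+1}>n_i$ with $v\notin\Gamma_{n_{i+1}}$. Then I would take $T_i$ to be a fundamental domain of $\Gamma_{n_i}/\Gamma_{n_{i+1}}$ containing both $1_G$ and $v$, and set $D_{i+1}=D_i\cdot T_i=\bigcup_{v\in T_i}D_iv$. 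This forces $g_k\in D_{i+1}$, so iterating gives (D3), while (D1) is immediate from $1_G\in T_i\cap D_i$.

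For (D2), a cardinality count gives $|D_{i+1}|=[G:\Gamma_{n_i}]\cdot[\Gamma_{n_i}:\Gamma_{n_{i+1}}]=[G:\Gamma_{n_{i+1}}]$, so it suffices to show that distinct pairs $(d,v)\in D_i\times T_i$ yield distinct classes modulo $\Gamma_{n_{i+1}}$. The argument reduces first modulo $\Gamma_{n_i}$, where normality of $\Gamma_{n_i}$ lets one conjugate past $d$ to conclude $d_1=d_2$, and then uses normality of $\Gamma_{n_{i+1}}$ to conjugate past $d$ again and conclude $v_1=v_2$. For (D4), the inclusion $T_i\subseteq\Gamma_{n_i}\subseteq\Gamma_{n_j}$ implies $D_{i+1}\cap\Gamma_{n_j}=(D_i\cap\Gamma_{n_j})\cdot T_i$, which combined with the inductive identity $D_i=\bigcup_{u\in D_i\cap\Gamma_{n_j}}D_j u$ yields the desired $D_{i+1}=\bigcup_{w\in D_{i+1}\cap\Gamma_{n_j}}D_j w$.

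The main obstacle I anticipate is the simultaneous compatibility of (D2) and (D4) across all previous scales: the definition of $D_{i+1}$ must extend the coset-representative property to the finer partition $G/\Gamma_{n_{i+1}}$ and respect the nested decomposition coming from every $n_j$ with $j\leq i$. Normality of the subgroups $\Gamma_n$ is precisely what allows one to freely commute elements of $\Gamma_{n_k}$ past elements of $D_j$ modulo $\Gamma_{n_{i+1}}$, and without it the product $D_i\cdot T_i$ would generally fail to be a fundamental domain, so the induction could not close.
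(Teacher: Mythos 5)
Your construction is correct and is essentially the standard one: the paper does not prove this lemma itself but cites \cite[Lemma 2.9]{CeCoGo23}, where the argument is precisely this recursion $D_{i+1}=D_i\cdot T_i$ with $T_i$ a transversal of $\Gamma_{n_{i+1}}$ in $\Gamma_{n_i}$ chosen deep enough (via trivial intersection) to absorb the next element of an enumeration of $G$, and your verifications of (D1)--(D4) all go through. One small correction to your closing remark: normality is not what makes $D_i\cdot T_i$ a transversal of $G/\Gamma_{n_{i+1}}$ --- if $D_i$ is a set of left-coset representatives of $\Gamma_{n_i}$ in $G$ and $T_i$ one of $\Gamma_{n_{i+1}}$ in $\Gamma_{n_i}$, then $d_1v_1\Gamma_{n_{i+1}}=d_2v_2\Gamma_{n_{i+1}}$ already forces $d_2^{-1}d_1\in v_2\Gamma_{n_i}v_1^{-1}=\Gamma_{n_i}$ and then $v_2^{-1}v_1\in\Gamma_{n_{i+1}}$ without any conjugation, so the tower-of-transversals fact holds for arbitrary nested subgroups; normality of the $\Gamma_n$ is needed elsewhere in the paper (for the group structure on $\overleftarrow{G}$ and the carry-over rule), not for this lemma.
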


  \begin{remark}\label{rem: D_n Folner if G amenable}
      When the group is amenable, the  sequence $(D_n)_{n\in \mathbb{N}}$ can be also assumed to be a left F\o lner sequence (see \cite[Lemma 5]{CoPe14}), by going over to a subsequence if necessary. However, this fact will only be used in the later sections. 
  \end{remark}
  For the remainder of this section, we assume that the sequence $(D_i)_{i\in \mathbb{N}}$ satisfies the assertions  (D1)--(D4) of Lemma~\ref{lem: ex. of seq. of fundamental domains}. Given $g_1,\ldots,g_n\in G$, we write $\prod_{j=1}^n g_j = g_1 g_2\ldots g_n$. Similarly, if $(g_j)_{j\in\N}$ is a sequence in $G$ such that $g_j=1_G$ for all $j>n$, we define the infinite product as $\prod_{j\in\N} g_j = \prod_{j=1}^n g_j$. Further, we let $D_0=\{1_G\}$. \medskip
  
Let $g\in G$ and $n\in \N$ be fixed.
Then there is a unique decomposition $g = \psi_n(g)\varphi_n(g)$ with $\psi_n(g)\in D_n$ and $\varphi_n(g)\in \Gamma_n$. Note that since $D_0=\{1_G\}$ and $\Gamma_0=G$, we have $\psi_0\equiv 1_G$ and $\varphi_0=\id_G$. Further, if $j\leq n$, then 
\begin{equation} \label{eq: expansion basic step} g = \psi_n(g)\varphi_n(g) =\underbrace{\psi_j(\psi_n(g))}_{\in D_j}\cdot \underbrace{ \varphi_j(\psi_n(g))\varphi_n(g)}_{\in \Gamma_j}.
\end{equation}
By uniqueness of the decomposition, this shows that $\psi_j(\psi_n(g)) = \psi_j(g)$.
In particular, we obtain $$g =\psi_{n-1}(g)  \varphi_{n-1}(\psi_n(g)) \varphi_n(g).$$

Now, due to (D3), there exists a minimal integer $N(g)\geq 0$ such that $g\in D_{N(g)+1}$. We then have $\psi_j(g)=g$ and $\varphi_j(g)=1_G$ for all $j>N(g)$. By the iterated application of (\ref{eq: expansion basic step}), we obtain an expansion 
$$ g = \left(\prod_{j=1}^{N(g)} \varphi_{j-1}(\psi_j(g))\right)\varphi_{N(g)}(g) $$
with $\varphi_{N(g)}(g)\in\Gamma_{N(g)}$ and $\varphi_{j-1}(\psi_j(g))\in\Gamma_{j-1}$ for all $j=1,\ldots,N(g)$. 
We define
\begin{equation} \label{eq: expansion coefficients}
\pi_j(g) := \varphi_{j-1}(\psi_j(g))  
\end{equation}
for all $j=1,\ldots,N(g)+1$ and $\pi_j(g)=1_G$ for all $j>N(g)+1$. Note that we still have $\pi_j(g)=\varphi_{n-1}(\psi_j(g))$ in the latter case, so that (\ref{eq: expansion coefficients}) actually holds for all $j\in\N$. Further, combining the definition of $\varphi_{j-1}$, the fact that $\psi_j(g)\in D_j$ and property (4), we obtain $\pi_j(g)\in D_j\cap \Gamma_{j-1}$. Moreover, we have $\pi_{N(g)+1}(g)=\varphi_{N(g)}(\psi_{N(g)+1}(g))=\varphi_{N(g)}(g)$ and
$$
g=\prod_{j=1}^{N(g)+1} \pi_j(g) = \prod_{j\in\N} \pi_j(g). $$
Thus, the elements $\pi_j(g)\in G$ play exactly the same role as the terms $n_jp^{j-1}$ in equation~(\ref{e.p-adic_expansion}).
\begin{remark}
In principle, we could also work with the finite expansions $g=\prod_{j=1}^{N(g)} \pi_j(g)$ instead of the infinite products. In the end this is a matter of taste, but we find the latter option notationally more convenient for our purposes. 
\end{remark}
The following lemma ensures the uniqueness of the above expansion. 
\begin{lemma} \label{eq: uniqueness of expansion}
    Suppose that 
    $$
    g=\prod_{j=1}^n b_j
    $$
    for some $n\in\N$ and $b_j\in D_j\cap \Gamma_{j-1}$. Then $n\geq N(g)+1$, $b_j=1_G$ for all $j>N(g)+1$ and $b_j=\pi_j(g)$ for all $j=1,\ldots , N(g)+1$. 
\end{lemma}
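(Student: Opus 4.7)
The plan is to leverage the uniqueness of the decomposition $g=\psi_k(g)\varphi_k(g)$ with $\psi_k(g)\in D_k$ and $\varphi_k(g)\in \Gamma_k$ by analyzing the partial products of the given expansion. For each $0\leq k\leq n$ I set $h_k:=\prod_{j=1}^{k} b_j$ (with $h_0:=1_G$) and $r_k:=\prod_{j=k+1}^{n} b_j$, so that $g=h_k r_k$. Since $b_j\in \Gamma_{j-1}\subseteq \Gamma_k$ for every $j>k$ and $\Gamma_k$ is a subgroup, it is immediate that $r_k\in \Gamma_k$.

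The first key step is to show, by induction on $k$, that $h_k\in D_k$. The base case $h_0=1_G\in D_0$ is trivial; for the inductive step I would apply property (D4) with $i=j=k$, which yields $D_{k+1}=\bigcup_{v\in D_{k+1}\cap \Gamma_k} D_k v$ (and a cardinality count shows the union is disjoint). Together with $h_k\in D_k$ and $b_{k+1}\in D_{k+1}\cap \Gamma_k$, this gives $h_{k+1}=h_k\cdot b_{k+1}\in D_k\cdot b_{k+1}\subseteq D_{k+1}$. Once this inclusion is in place, uniqueness of the $\psi_k$-$\varphi_k$ decomposition forces $\psi_k(g)=h_k$ and $\varphi_k(g)=r_k$ for every $0\leq k\leq n$.

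Specializing to $k=n$ yields $r_n=1_G$ and hence $g=h_n\in D_n$. By definition of $N(g)$ and the nesting of the $D_i$, this forces $n\geq N(g)+1$. Moreover, for every $j\geq N(g)+1$ we have $g\in D_j$, and uniqueness applied to the trivial decomposition $g=g\cdot 1_G$ gives $\psi_j(g)=g$, so that $h_j=g$. The recursion $h_{j+1}=h_j\cdot b_{j+1}$ then immediately implies $b_{j+1}=1_G$ for every $j\geq N(g)+1$, which covers the full range $j>N(g)+1$.

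Finally, for $1\leq j\leq N(g)+1$ the equality $\psi_j(g)=h_j=h_{j-1}\cdot b_j$ with $h_{j-1}\in D_{j-1}$ and $b_j\in \Gamma_{j-1}$ is precisely a $\psi_{j-1}$-$\varphi_{j-1}$ decomposition of $\psi_j(g)$; uniqueness then gives $b_j=\varphi_{j-1}(\psi_j(g))=\pi_j(g)$, completing the lemma. The only nontrivial ingredient of the argument is property (D4), which is exactly what is needed to ensure that products of representatives remain representatives; I expect this induction step to be the main obstacle, as the rest of the proof reduces to repeated invocations of the uniqueness of coset representatives.
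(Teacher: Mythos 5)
Your proof is correct and follows essentially the same route as the paper: both arguments use (D4) to show that the partial products $\prod_{j=1}^k b_j$ lie in $D_k$ while the tails lie in $\Gamma_k$, and then invoke uniqueness of the $D_k$--$\Gamma_k$ decomposition to identify these with $\psi_k(g)$ and $\varphi_k(g)$. Your final step, identifying $b_j$ directly with $\varphi_{j-1}(\psi_j(g))=\pi_j(g)$ from the definition, is a marginally more direct finish than the paper's symmetric application of the same argument to the sequence $\pi_1(g),\ldots,\pi_n(g)$, but the substance is identical.
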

\begin{proof}
First of all, if $n\leq N(g)$ then $g\in D_{N(g)}$ by (D4), contradicting the definition of $N(g)$. Hence, we have $n\geq N(g)+1$. 
Due to $b_j\in D_j\cap \Gamma_{j-1}$ in combination with property (4), we have that 
$$
\prod_{j=1}^k b_j \in D_k \quad \textrm{ and } \quad \prod_{j=k+1}^n b_j \in \Gamma_k$$
for all $k=1,\ldots,n$. Due to the uniqueness of the decomposition in (\ref{e.p-adic_expansion}), this implies 
$$
\prod_{j=1}^k b_j =\psi_k(g) \quad \textrm{ and } \quad \prod_{j=k+1}^n b_j =\varphi_k(g) \ . 
$$
This immediately yields $b_k=\psi_{k-1}(g)^{-1}\cdot \psi_k(g)$ for all $k=1,\ldots, n$.
However, exactly the same arguments apply to the sequence $\pi_1(g),\ldots \pi_{n}(g)$, so that we obtain  
\begin{equation}\label{eq: expansion alternative}
b_j=\pi_j(g)=\psi_{j-1}^{-1}(g)\cdot \psi_j(g) 
\end{equation}
for all $j=1,\ldots,n$. Since $\pi_{N(g)+2}(g)=\ldots=\pi_N(g)=1_G$, this proves the assertion. 
\end{proof}
\begin{remark}
We note that (\ref{eq: expansion alternative}) would provide an alternative way to define the expansion coefficients $\pi_j(g)$. 
\end{remark}

Now, the following properties are straightforward to verify.

\begin{lemma}\label{lem: basic properties pi_j}
    Let $g,h\in G$ and $n\in\N$. Then
    \begin{enumerate}
        \item $\psi_n(g) = \pi_1(g)\ldots\pi_n(g)$.
        \item  $\psi_n(g) = \psi_n(h) \iff \forall j\in \set{1,\ldots,n}: \pi_j(g) = \pi_j(h)$.
        \item If $\gamma\in \Gamma_n$, then $\pi_n(\gamma g) = \pi_n(g\gamma) = \pi_n(g)$.
        \item $\pi_j(g) = \pi_j(\psi_n(g))$ for all $j\leq n$.
        \item $\psi_n(gh) = \psi_n(\psi_n(g)\psi_n(h))$ and $\psi_n(g^{-1}) = \psi_n(\psi_n(g)^{-1})$.
    \end{enumerate}
\end{lemma}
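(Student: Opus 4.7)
The plan is to derive all five items directly from the defining decomposition $g=\psi_n(g)\varphi_n(g)$ together with its uniqueness, heavily using that each $\Gamma_n$ is normal in $G$.

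For (1), I would exploit equation (\ref{eq: expansion alternative}), which gives the telescoping identity $\pi_j(g)=\psi_{j-1}(g)^{-1}\psi_j(g)$. Multiplying these out from $j=1$ to $j=n$ and using $\psi_0(g)=1_G$ yields $\pi_1(g)\cdots\pi_n(g)=\psi_n(g)$. For (2), the backward direction is immediate from (1). For the forward direction, one applies Lemma~\ref{eq: uniqueness of expansion} to $\psi_n(g)=\psi_n(h)$ (viewed as an element of $D_n$), which shows that both $(\pi_j(g))_{j\leq n}$ and $(\pi_j(h))_{j\leq n}$ must coincide with the unique sequence of factors produced by the expansion algorithm, and hence agree termwise.

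For item (4) I would use the identity $\psi_j(\psi_n(g))=\psi_j(g)$ for $j\leq n$, which was already derived in (\ref{eq: expansion basic step}) during the construction of the expansion. Applying $\varphi_{j-1}$ to both sides and invoking the definition $\pi_j(\cdot)=\varphi_{j-1}(\psi_j(\cdot))$ gives the claim.

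Items (3) and (5) both rely on the following auxiliary observation, which I would prove first: for any $\gamma\in\Gamma_n$ we have $\psi_n(\gamma g)=\psi_n(g\gamma)=\psi_n(g)$. Indeed, using $g=\psi_n(g)\varphi_n(g)$ and the normality of $\Gamma_n$, we can rewrite $\gamma g=\psi_n(g)(\psi_n(g)^{-1}\gamma\psi_n(g))\varphi_n(g)$, whose second factor lies in $\Gamma_n$; uniqueness of the decomposition in $D_n\cdot\Gamma_n$ then yields $\psi_n(\gamma g)=\psi_n(g)$, and analogously on the right. With this in hand, (3) follows from (4) applied at level $j=n$, since $\pi_n(\gamma g)=\varphi_{n-1}(\psi_n(\gamma g))=\varphi_{n-1}(\psi_n(g))=\pi_n(g)$, and the same for $g\gamma$. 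For the first half of (5), I would write $gh=\psi_n(g)\varphi_n(g)\psi_n(h)\varphi_n(h)$ and, using normality, push the element $\varphi_n(g)\in\Gamma_n$ past $\psi_n(h)$, obtaining $gh=\psi_n(g)\psi_n(h)\cdot\gamma$ for a suitable $\gamma\in\Gamma_n$; then the auxiliary observation gives $\psi_n(gh)=\psi_n(\psi_n(g)\psi_n(h))$. For the inverse statement, I would similarly write $g^{-1}=\varphi_n(g)^{-1}\psi_n(g)^{-1}=\psi_n(g)^{-1}\gamma'$ with $\gamma'\in\Gamma_n$ and conclude in the same way.

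No step looks like a serious obstacle; the only point requiring some care is the bookkeeping with normality in (3) and (5), where one must be explicit about conjugating $\Gamma_n$-elements across $D_n$-elements. Everything else is a direct consequence of uniqueness of the decomposition and the telescoping identity (\ref{eq: expansion alternative}).
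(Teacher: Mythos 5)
Your proof is correct. The paper itself gives no argument for this lemma (it is introduced with ``the following properties are straightforward to verify''), and your verification via the telescoping identity $\pi_j(g)=\psi_{j-1}(g)^{-1}\psi_j(g)$, the uniqueness of the decomposition $g=\psi_n(g)\varphi_n(g)$, and the normality of $\Gamma_n$ to establish $\psi_n(\gamma g)=\psi_n(g\gamma)=\psi_n(g)$ is exactly the intended line of reasoning, with all the bookkeeping done correctly.
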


Next, we establish a way to compute $\pi_n(gh)$ from $\pi_j(g)$ and $\pi_j(h)$ ($j\in\set{1,\ldots,n}$).
This is again analogous to the addition in the $p$-adic integers. However, as already mentioned above, due to the possible non-commutativity of $G$, the corresponding `carry-over rule' is slightly more involved than in the $p$-adic case. For $h\in G$ we define an automorphism $\alpha_h: G\to G, \: g\mapsto g^{-1}hg$.

\begin{lemma}[Carry over rule]\label{lem: carry over rule}
Let $g,h \in G$ and set $d_1 = 1_G$. 
For $j\in \N$, inductively define $c_j = d_j\cdot \alpha_{\psi_{j-1}(h)}(\pi_j(g)) \cdot \pi_j(h) $ and $d_{j+1} = \varphi_j(c_j) \in \Gamma_j$.
Then, for any $n\in\N$ we have  
\begin{equation} \label{eq: product expansion formula}
gh = \left(\prod_{j=1}^n \psi_j(c_j)\right)\cdot d_{n+1}\cdot \alpha_{\psi_n(h)}(\varphi_n(g))\cdot \varphi_n(h)
\end{equation} 
In particular, $\pi_j(gh) = \psi_j(c_j)$ for all $j\in\N$.

\end{lemma}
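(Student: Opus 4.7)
The plan is to establish the identity \eqref{eq: product expansion formula} by induction on $n$, and then deduce $\pi_j(gh)=\psi_j(c_j)$ from the uniqueness of the expansion in Lemma~\ref{eq: uniqueness of expansion}. The base case $n=1$ reduces, after unfolding $c_1=\pi_1(g)\pi_1(h)$ together with the factorisations $g=\pi_1(g)\varphi_1(g)$ and $h=\pi_1(h)\varphi_1(h)$, to the tautology $\varphi_1(g)\pi_1(h)=\pi_1(h)\cdot\alpha_{\pi_1(h)}(\varphi_1(g))$, which is the very definition of conjugation by $\pi_1(h)$.

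The inductive step amounts to rewriting the tail $d_{n+1}\,\alpha_{\psi_n(h)}(\varphi_n(g))\,\varphi_n(h)$ at level $n$ as $\psi_{n+1}(c_{n+1})\,d_{n+2}\,\alpha_{\psi_{n+1}(h)}(\varphi_{n+1}(g))\,\varphi_{n+1}(h)$ at level $n+1$. Since $\psi_{n+1}(c_{n+1})\,d_{n+2}=c_{n+1}=d_{n+1}\,\alpha_{\psi_n(h)}(\pi_{n+1}(g))\,\pi_{n+1}(h)$ by definition, and $\alpha_{\psi_n(h)}$ is a homomorphism, after cancelling $d_{n+1}$ and expanding $\varphi_n(g)=\pi_{n+1}(g)\varphi_{n+1}(g)$ and $\varphi_n(h)=\pi_{n+1}(h)\varphi_{n+1}(h)$ the claim reduces to the single commutation identity
\[
\alpha_{\psi_n(h)}(\varphi_{n+1}(g))\cdot\pi_{n+1}(h)=\pi_{n+1}(h)\cdot\alpha_{\psi_{n+1}(h)}(\varphi_{n+1}(g)),
\]
which is immediate from $\psi_{n+1}(h)=\psi_n(h)\pi_{n+1}(h)$ via a one-line calculation.

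For the ``in particular'' claim, I observe that $\psi_j(c_j)\in D_j\cap\Gamma_{j-1}$: the first inclusion is definitional, and for the second one checks $c_j\in\Gamma_{j-1}$ factor-by-factor, using normality of $\Gamma_{j-1}$ to handle the conjugated term $\alpha_{\psi_{j-1}(h)}(\pi_j(g))$. A short induction via property $\mathrm{(D4)}$ then shows $\prod_{j=1}^n\psi_j(c_j)\in D_n$, while the tail $d_{n+1}\,\alpha_{\psi_n(h)}(\varphi_n(g))\,\varphi_n(h)$ lies in $\Gamma_n$ (again by normality). Since the decomposition $gh\in D_n\cdot\Gamma_n$ is unique, \eqref{eq: product expansion formula} forces $\prod_{j=1}^n\psi_j(c_j)=\psi_n(gh)$ for every $n$, and Lemma~\ref{lem: basic properties pi_j}(1) — or equivalently \eqref{eq: expansion alternative} — then yields $\psi_j(c_j)=\pi_j(gh)$ for all $j$.

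The main obstacle is the book-keeping dictated by the non-commutativity of $G$: the conjugations $\alpha_{\psi_n(h)}$ must be propagated through the expansion in exactly the manner prescribed by the recursive definition of $c_{n+1}$, and the interplay between $\psi_n$ and $\psi_{n+1}$ has to be matched against the shift from $\varphi_n$ to $\varphi_{n+1}$. Once the commutation relation between $\alpha_{\psi_n(h)}$, $\alpha_{\psi_{n+1}(h)}$ and multiplication by $\pi_{n+1}(h)$ is isolated, the remainder of the proof is a mechanical verification using nothing more than normality of the subgroups $\Gamma_j$, property $\mathrm{(D4)}$, and the uniqueness statement of Lemma~\ref{eq: uniqueness of expansion}.
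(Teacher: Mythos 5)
Your proof is correct and follows essentially the same route as the paper's: induction on $n$, with the inductive step reducing to the same conjugation identity $\alpha_{\psi_n(h)}(u)\cdot\pi_{n+1}(h)=\pi_{n+1}(h)\cdot\alpha_{\psi_{n+1}(h)}(u)$, and the final claim extracted via uniqueness of the expansion. The only organizational difference is that you refine the tail of the level-$n$ formula directly rather than applying the induction hypothesis to the truncations $\psi_n(g)$, $\psi_n(h)$ as the paper does, and you spell out in more detail why $\psi_j(c_j)\in D_j\cap\Gamma_{j-1}$ before invoking uniqueness — a worthwhile clarification, but not a different argument.
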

The quantities $d_{j}\in $ play the role of the carry over.
\begin{proof}
We proceed by induction on $n$. 
For $n=1$, Lemma~\ref{lem: basic properties pi_j}(1) yields that $\pi_1(g) = \psi_1(g)$ and $\pi_1(h) = \psi_1(h)$.
Since $\alpha_{\psi_0(g)} = \alpha_{1_G}=\mathrm{Id}_G$ and $d_1=1_G$, we thus obtain $c_1 = \psi_1(g)\psi_1(h)$ and
\begin{align*}
    gh &= \psi_1(g)\varphi_1(g) \cdot \psi_1(h) \varphi_1(h)= \psi_1(g)\psi_1(h)\cdot \alpha_{\psi_1(h)}(\varphi_1(g)) \cdot \varphi_1(h)\\
    &=  \psi_1(\psi_1(g)\psi_1(h))\cdot \varphi_1(\psi_1(g)\psi_1(h))\cdot \alpha_{\psi_1(h)}(\varphi_1(g))\cdot \varphi_1(h) \\
    &= \psi_1(c_1)\varphi_1(c_1)\cdot \alpha_{\psi_1(h)}(\varphi_1(g)) \cdot\varphi_1(h).\\
    &= \psi_1(c_1)\cdot d_2\cdot \alpha_{\psi_1(h)}(\varphi_1(g)) \cdot\varphi_1(h).
\end{align*}
Now suppose the induction hypothesis holds for $n\in \N$. Due to Lemma~\ref{lem: basic properties pi_j}(4), we have that $\pi_j(\psi_n(g))=\pi_j(g)$ and $\pi_j(\psi_n(h))=\pi_j(h)$ for all $j=1,\ldots,n$. Since the quantities $c_1,\ldots , c_n$ and $d_1,\ldots , d_{n+1}$ only depend on the first $n$ expansion coefficients, the inductive hypothesis applied to $\psi_n(g)$ and $\psi_n(h)$ implies 
\begin{align} \label{eq: expansion inductive hypothesis application} \psi_n(g)\psi_n(h) &= \left(\prod_{j=1}^n \psi_ j(c_j)\right)\cdot d_{n+1}\cdot \nonumber\underbrace{\alpha_{\psi_n(\psi_n(h))}(\varphi_n(\psi_n(g))\cdot \varphi_n(\psi_n(h))}_{=1_G,\textrm{ since } \varphi_n\circ\psi_n\equiv1_G} \\ & = \left(\prod_{j=1}^n \psi_ j(c_j)\right)\cdot d_{n+1}.
\end{align}
Hence, we obtain 
\begin{align*}
    gh &= \psi_n(g)\pi_{n+1}(g) \varphi_{n+1}(g) \cdot  \psi_n(h)\pi_{n+1}(h)\varphi_{n+1}(h)\\
       &= \psi_n(g)\psi_n(h)\cdot \alpha_{\psi_n(h)}(\pi_{n+1}(g))\cdot \psi_n(h)^{-1}\varphi_{n+1}(g)\psi_n(h)\pi_{n+1}(h)\varphi_{n+1}(h) \\
       &=\psi_n(g)\psi_n(h)\cdot \alpha_{\psi_n(h)}(\pi_{n+1}(g))\\
        &\hspace{2cm}\cdot \pi_{n+1}(h)\underbrace{\pi_{n+1}(h)^{-1}\psi_n(h)^{-1}}_{=\psi_{n+1}(h)^{-1}}\varphi_{n+1}(g)\underbrace{\psi_n(h)\pi_{n+1}(h)}_{=\psi_{n+1}(h)}\varphi_{n+1}(h) \\
       &= \psi_n(g)\psi_n(h)\cdot \alpha_{\psi_n(h)}(\pi_{n+1}(g))\cdot \pi_{n+1}(h)\cdot\alpha_{\psi_{n+1}(h)}(\varphi_{n+1}(g))\cdot \varphi_{n+1}(h)   \\
    &\stackrel{(\ref{eq: expansion inductive hypothesis application})}{=} \left(\prod_{j=1}^n \psi_j(c_j)\right)\cdot \underbrace{d_{n+1} \cdot \alpha_{\psi_n(h)}(\pi_{n+1}(g)) \cdot \pi_{n+1}(h) }_{=\ c_{n+1}\ = \ \psi_{n+1}(c_{n+1})d_{n+2}}\cdot  \alpha_{\psi_{n+1}(h)}(\varphi_{n+1}(g))\cdot \varphi_{n+1}(h)   \\
    &= \left(\prod_{j=1}^{n+1}\psi_j(c_j)\right)   \cdot d_{n+2} \cdot \alpha_{\psi_{n+1}(h)}(\varphi_{n+1}(g))\cdot \varphi_{n+1}(h).
\end{align*}
This completes the induction. The fact that $\pi_j(gh) = \psi_j(c_j)$ now follows from the uniqueness of the expansion in Lemma~\ref{eq: uniqueness of expansion}. 
\end{proof}

\begin{remark}
    Note that if $n>\max\{N(g)+1,N(h)+1\}$, then $\varphi_n(g)=\varphi_n(h)=1_G$. Hence, (\ref{eq: product expansion formula}) simplifies to $gh = \left(\prod_{j=1}^n \psi_j(c_j)\right)\cdot d_{n+1}$. As $d_{n+1}\in\Gamma_{n}$, it has an expansion of the form $\prod_{j=n+1}^{N(d_{n+1})+1} \pi_j(d_{n+1})$. Together, this yields the finite expansion
    $$
    gh = \left(\prod_{j=1}^n \psi_j(c_j)\right)\cdot \left(\prod_{j=n+1}^{N(d_{n+1})+1} \pi_j(d_{n+1})\right)
    $$
    for the product of $g$ and $h$. In particular, we have $N(gh)=N(d_{n+1})$. 
\end{remark}

\begin{remark}\label{rem: carry-over maps} For later use, the following observation is important. 
From the recursive definition in Lemma~\ref{lem: carry over rule}, we obtain maps
$$ d_j: G\times G\to G \quad , \quad (g,h)\mapsto d_j(g,h),$$
 which correspond to the $j$-th carry over that occurs in the computation of the expansion of $gh$.  These satisfy $d_1(g,h) = 1_G$ and
$$
d_{j+1}(g,h)=\varphi_{j-1}\left(d_{j}(g,h)\cdot \alpha_{\psi_{j-1}(h)}(\pi_j(g)) \cdot \pi_j(h)\right).
$$
It follows by induction that each function $d_j$ has finite range, since the ranges of $\pi_j:G\to D_j\cap \Gamma_{j-1}$ and $\psi_j:G\to D_j$ are all finite as well. More precisely, 
we have that $K_1=\{1_G\}$ and 
$$ K_{j+1}\subseteq \varphi_{j-1}(K_j)\cdot D_j^{-1}\cdot(D_j\cap \Gamma_{j-1})\cdot D_j\cdot (D_j\cap \Gamma_{j-1})$$
only depends on the sets $D_1,\ldots,D_{j-1}$,
for all $j>1$. 
\end{remark}

\begin{remark}\label{rem: extension of psi_j and pi_j}
    The maps $\psi_j: G\to D_j$ and $\pi_j:G\to D_j\cap\Gamma_{j-1}$ can be extended to $\overleftarrow{G}$ by defining for $\xi=(\xi_n\Gamma_n)_{n\in\N}$
    $$ \psi_j(\xi) = \psi_j(\xi_j)\:\text{ and }\: \pi_j(\xi) = \pi_j(\xi_j).$$
    This is well defined, because $\psi_j$ and $\pi_j$ are constant on each cylinder of level $j$  (see Lemma~\ref{lem: basic properties pi_j}). In addition, one has that $\psi_j(\xi) = \psi_j(\xi_n)$ as well as $\pi_j(\xi) = \pi_j(\xi_n)$ for all $j\leq n$.
    Moreover, the mapping $\overleftarrow{G}\to \prod_{n\in\N} (D_n\cap\Gamma_{n-1}),\: \xi\mapsto(\pi_n(\xi))_{n\in\N}$ is a bijection.
 \end{remark}

\section{Toeplitz subshifts}\label{sec: Toeplitz}


\subsection{Definitions.}
Let $\Sigma$ be a finite set with at least two elements. We consider $\Sigma$ endowed with the discrete topology and $\Sigma^G$ with the product topology.
The group $G$ acts on $\Sigma^G$ from the left by the shift action:
\begin{align*}
    (gx)(h)=x(hg), \mbox{ for every }g,h\in G, x\in \Sigma^G.
\end{align*}

The dynamical system $(\Sigma^G, G)$ is referred to as the \textbf{full shift} or the full $G$-shift.

Given a closed, $G$-invariant subset $X \subseteq \Sigma^G$, the dynamical system $(X, G)$, obtained by restricting the shift action to $X$, will be called a \textbf{subshift} or a  $G$-subshift.

We say that $x\in\Sigma^G$ is a \textbf{Toeplitz array} if for every $g\in G$ there exists  a finite index subgroup $\Gamma_g$ of $G$ such that $x(g)=x(g\gamma)$ for each $\gamma\in \Gamma_g$.
The subshift $X=\overline{O_G(x)}$ induced by a Toeplitz array $x\in\Sigma^G$ is called a \textbf{Toeplitz subshift}. 
We recall that Toeplitz subshifts are always minimal systems  (see \cite{CoPe08}).
It is well-known that any Toeplitz subshift is an almost one-to-one extension of a suitable $G$-odometer.
With the following notions, one can determine this extension precisely.\\
Let $x\in \Sigma^G$ and $\Gamma$ a finite index subgroup of $G$ and $\alpha\in\Sigma$.
We define
\begin{align*}
    \Per(x,\Gamma,\alpha)&=\{g\in G: x(g\gamma)=\alpha, \mbox{ for every }\gamma\in \Gamma\},\\
    \Per(x,\Gamma)&=\bigcup_{\alpha\in \Sigma}\Per(x,\Gamma,\alpha).
\end{align*}
We say that $\Gamma\leq G$ is a \textbf{period} for $x$ if $\Per(x,\Gamma)\neq \emptyset$.
Using these sets, it is possible to characterize the Toeplitz arrays as the elements $x\in\Sigma^G$ such that there exists a decreasing sequence $(\Gamma_n)_{n\in\mathbb{N}}$ of finite index subgroups of $G$ such that $G=\bigcup_{n\in\mathbb{N}}\Per(x,\Gamma_n)$.
Assume that $x\in\Sigma^G$ is a Toeplitz array.
We say that a period $\Gamma$ of $G$ is \textbf{essential} if we have that, for any $g\in G$, $\Per(x,\Gamma,\alpha)\subseteq \Per(gx,\Gamma,\alpha)$ for each $\alpha\in\Sigma$ implies $g\in\Gamma$.
A decreasing sequence $(\Gamma_n)_{n\in\mathbb{N}}$ of finite index subgroups of $G$ is called a \textbf{period structure} of $x$, if for every $n\in\mathbb{N}$ the group $\Gamma_n$ is an essential period for $x$, and if $G=\bigcup_{n\in\mathbb{N}}\Per(x,\Gamma_n)$.
By \cite[Corollary 6]{CoPe08} we can always guarantee the existence of a period structure for a Toeplitz array.
Let $x\in\Sigma^G$ be a Toeplitz array with period structure given by $(\Gamma_n)_{n\in\mathbb{N}}$. For every $n\in\mathbb{N}$, consider the set 
\begin{align*}
    C_n=\{y\in \overline{O_G(x)}:\Per(y,\Gamma_n,\alpha)=\Per(x,\Gamma_n,\alpha), \mbox{ for every }\alpha\in\Sigma\}.
\end{align*}
The previous sets have the properties that they are clopen and for every $g\in G$, $gx\in C_n$ if and only if $g\in \Gamma_n$.
 Therefore, we can guarantee that for each $n\in\mathbb{N}$, the collection $\{gC_n:g\in D_n\}$ is a clopen partition of $\overline{O_G(x)}$, where $D_n$ is a fundamental domain of $G/\Gamma_n$, i.e., $D_n\subseteq G$ contains exactly one representative of each (left) coset in $G/\Gamma_n$.

\begin{proposition}[\cite{CoPe08}, \cite{Kr10}]\label{MEF_Toeplitz}
    Let $x\in\Sigma^G$ be a Toeplitz array and let $(\Gamma_n)_{n\in\mathbb{N}}$ be a period structure of $x$ and $\overleftarrow{G}$ the $G$-odometer associated to $(\Gamma_n)_{n\in\mathbb{N}}$. 
    The map $\pi:\overline{O_G(x)}\to\overleftarrow{G}$ defined by 
    \begin{align*}
        \pi(y)=(g_n\Gamma_n)_{n\in\mathbb{N}}, \mbox{ where }y\in g_nC_n, \mbox{ for each }n\in\mathbb{N}
    \end{align*}
    is almost one-to-one. The almost automorphic points with respect to this function are given by the Toeplitz elements in $\overline{O_G(x)}$. In other words,
    \begin{align}\label{Sec2:Toeplitzpoints}
        \mathcal{T} := \left\{z\in\overline{O_G(x)}:z \mbox{ is a Toeplitz array}\right\}=\pi^{-1}\left(\left\{\xi\in\overleftarrow{G}: \#\pi^{-1}(\{\xi\})=1\right\}\right).
    \end{align}
    Moreover, $\overleftarrow{G}$ is the maximal equicontinuous factor of $\overline{O_G(x)}$. 
\end{proposition}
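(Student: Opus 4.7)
The plan is to verify properties of $\pi$ in several steps, leveraging the clopen partition $\{gC_n : g\in D_n\}$ of $\overline{O_G(x)}$ provided by the essential period structure $(\Gamma_n)_{n\in\N}$.

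First, I would establish that $\pi$ is well-defined, continuous, surjective, and $G$-equivariant. Well-definedness requires the compatibility $g_{n+1}\Gamma_n = g_n\Gamma_n$; this follows because the partition at level $n+1$ refines the one at level $n$, so whenever $g_{n+1}C_{n+1} \cap g_nC_n \neq \emptyset$, one must have $g_{n+1}\Gamma_n = g_n\Gamma_n$. Continuity is immediate, since $\pi^{-1}([\xi]_n)$ coincides with the clopen set $g_nC_n$ for the unique representative $g_n\in D_n$ of $\xi_n\Gamma_n$. Equivariance follows from $g\cdot(g_nC_n)=(gg_n)C_n$, and surjectivity is a consequence of the minimality of $(\overleftarrow{G},G)$, since the image of $\pi$ is a closed, nonempty, $G$-invariant subset.

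Second, I would compute $\pi^{-1}(\{1_{\overleftarrow G}\})=\{x\}$. The inclusion $\{x\}\subseteq \pi^{-1}(\{1_{\overleftarrow G}\})$ is immediate, since $x\in C_n$ trivially for every $n$. Conversely, any $y\in \bigcap_{n\in\N} C_n$ satisfies $\Per(y,\Gamma_n,\alpha)=\Per(x,\Gamma_n,\alpha)$ for every $n$ and every $\alpha\in\Sigma$, which forces $y(g)=x(g)$ on $\bigcup_n \Per(x,\Gamma_n)=G$, so $y=x$. Combining $G$-equivariance with this identity yields $\pi^{-1}(\{\tau(g)\})=\{gx\}$ for every $g\in G$, and since $\tau(G)$ is dense in $\overleftarrow G$, the map $\pi$ is almost one-to-one.

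Third, for identity (\ref{Sec2:Toeplitzpoints}), I would prove both implications. If $z\in \overline{O_G(x)}$ is a Toeplitz array, applying the same reasoning as above with $z$ in place of $x$ forces $\pi^{-1}(\pi(z))=\{z\}$. Conversely, if $\pi^{-1}(\xi)=\{z\}$ for $\xi=(g_n\Gamma_n)_n$, then $g_n^{-1}z\in C_n$ for each $n$, and translating back through the normality of $\Gamma_n$ yields a relation of the form $\Per(z,\Gamma_n,\alpha)=\Per(x,\Gamma_n,\alpha)\cdot g_n^{-1}$. A covering argument using $G=\bigcup_n \Per(x,\Gamma_n)$ then shows that $\bigcup_n \Per(z,\Gamma_n)$ exhausts $G$, so $z$ is Toeplitz. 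Finally, to show that $\overleftarrow G$ is the maximal equicontinuous factor, I invoke the standard fact that an almost one-to-one factor map from a minimal system onto a minimal equicontinuous one must coincide with the MEF: if $\pi_{\mathrm{MEF}}\colon \overline{O_G(x)}\to M$ denotes the MEF, then $\pi$ factors as $\phi\circ \pi_{\mathrm{MEF}}$ for some factor map $\phi\colon M\to \overleftarrow G$ between minimal equicontinuous systems. Since $\pi$ is almost one-to-one, so is $\phi$, and an almost one-to-one factor between minimal equicontinuous systems is necessarily a conjugacy, yielding $M\cong \overleftarrow G$.

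The step I expect to be the main obstacle is the second implication in the third paragraph: verifying that a singleton fiber forces the Toeplitz property. This requires careful bookkeeping of how the periodic sets $\Per(z,\Gamma_n,\alpha)$ transform under the non-commutative $G$-action, as well as a covering argument to ensure that every $h\in G$ eventually lies in some $\Per(z,\Gamma_n)$.
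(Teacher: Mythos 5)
First, a remark on the comparison itself: the paper does not prove Proposition~\ref{MEF_Toeplitz} — it is quoted from \cite{CoPe08} and \cite{Kr10} — so your argument can only be judged on its own terms. Your first two steps (well-definedness via the refinement $C_{n+1}\subseteq C_n$, continuity, equivariance, surjectivity by minimality, and $\pi^{-1}(\{1_{\overleftarrow{G}}\})=\{x\}$ from $G=\bigcup_n\Per(x,\Gamma_n)$) are correct, and so is the reduction of the MEF claim to the standard fact about almost one-to-one extensions of minimal equicontinuous systems. The genuine gaps are in both implications of \eqref{Sec2:Toeplitzpoints}.

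For ``$z$ Toeplitz $\Rightarrow$ singleton fiber'', ``the same reasoning as above'' does not transfer. The argument for $x$ used that $(\Gamma_n)_{n\in\N}$ is a period structure \emph{for $x$}, i.e.\ $G=\bigcup_n\Per(x,\Gamma_n)$. For a Toeplitz $z\in\overline{O_G(x)}$ with $\pi(z)=(g_n\Gamma_n)_{n\in\N}$, any $y$ in the fiber is only forced to agree with $z$ on $\bigcup_n\Per(z,\Gamma_n)=\bigcup_n\Per(x,\Gamma_n)g_n^{-1}$, and the Toeplitz property of $z$ only gives that each $h\in G$ lies in $\Per(z,\Lambda)$ for \emph{some} finite-index $\Lambda$, not for one of the fixed $\Gamma_n$. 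You must either prove the period-compatibility statement $G=\bigcup_n\Per(z,\Gamma_n)$, or reorder the proof: establish the MEF claim first, build $z$'s own odometer factor $\pi_z$ with $\pi_z^{-1}(\pi_z(z))=\{z\}$, and factor it through $\pi$ to conclude $\pi^{-1}(\pi(z))\subseteq\pi_z^{-1}(\pi_z(z))=\{z\}$.

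For ``singleton fiber $\Rightarrow$ Toeplitz'', the proposed ``covering argument using $G=\bigcup_n\Per(x,\Gamma_n)$'' is a non sequitur: the set that must exhaust $G$ is $\bigcup_n\Per(x,\Gamma_n)g_n^{-1}$, with a translation varying in $n$, and this can genuinely fail to cover $G$ (it fails precisely at the critical points $\xi$ of Sections~\ref{sec: self similarity}--\ref{sect: self-similar windows}, e.g.\ when $\tau(h)\xi$ lies in the boundary of the associated window). The hypothesis $\#\pi^{-1}(\{\xi\})=1$ has to be used directly: if $hg_n\notin\Per(x,\Gamma_n)$ for every $n$, choose $\gamma_n,\gamma_n'\in\Gamma_n$ with $x(hg_n\gamma_n)\neq x(hg_n\gamma_n')$ and pass to limits of $((g_n\gamma_n)x)_n$ and $((g_n\gamma_n')x)_n$ to produce two distinct elements of $\pi^{-1}(\{\xi\})$ — this is exactly the compactness argument in the last paragraph of the proof of Lemma~\ref{Characterization_Phi}. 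Finally, a smaller caveat: the translation rule $\Per(gz,\Gamma_n,\alpha)=\Per(z,\Gamma_n,\alpha)g^{-1}$ that you use in both directions requires $\Gamma_n$ to be normal, which is not among the hypotheses of the proposition as stated.
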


\subsection{Regular Toeplitz subshifts}\label{sec: regularity}

 In this subsection, we are interested in determining how close a Toeplitz subshift is --- in a measure-theoretic sense --- to its associated $G$-odometer. To that end, let $x\in\Sigma^G$ be a Toeplitz array with period structure $(\Gamma_n)_{n\in\mathbb{N}}$.
Further, let $(n_i)_{i\in\N}$ and $(D_i)_{i\in\mathbb{N}}$ be as in Lemma~\ref{lem: ex. of seq. of fundamental domains}. By taking a subsequence of $(\Gamma_n)_{n\in\mathbb{N}}$ if necessary, we may assume that $n_i=i$ for each $i\in\mathbb{N}$. 
Note here that the $G$-odometer that is obtained from a subsequence of $(\Gamma_n)_{n\in\N}$ is always conjugate to the odometer obtained from the sequence $(\Gamma_n)_{n\in\N}$ itself (see \cite[Lemma 2]{CoPe08}).
Let $\overleftarrow{G}$ be the $G$-odometer associated to $(\Gamma_n)_{n\in\mathbb{N}}$. By Proposition~\ref{MEF_Toeplitz}, the odometer $(\overleftarrow{G},G)$ is the maximal equicontinuous factor of $(\overline{O_G(x)},G)$. 

Consider the sequence $(d_n)_{n\in\mathbb{N}}$, given by
\begin{align*}
    d_n=\dfrac{\#(D_n\cap \Per(x,\Gamma_n))}{\#D_n}.
\end{align*}
This sequence is increasing and converges to some $d\in(0,1]$ (see, for example, \cite{CeCoGo23}).
Consider the set of Toeplitz arrays $\mathcal{T}$ defined in \eqref{Sec2:Toeplitzpoints}.

\begin{proposition}[{\cite[Proposition 3.3]{CeCoGo23}}] \label{prop: regular Toeplitz flows}
    The following statements are equivalent
    \begin{enumerate}
        \item $\nu(\pi(\mathcal{T}))=1$.
        \item There exists an invariant probability measure $\mu$ on $\overline{O_G(x)}$ such that \mbox{$\mu(\mathcal{T})=1$.}
        \item There exists a unique invariant probability measure $\mu$ on $\overline{O_G(x)}$ such that $\mu(\mathcal{T})=1$.
        \item $d=1$.
    \end{enumerate}
\end{proposition}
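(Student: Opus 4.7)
The plan is to establish all equivalences by reducing them to the single identity $\mu(\mathcal{T})=\nu(\pi(\mathcal{T}))$, and then to compute $\nu(\pi(\mathcal{T}))$ explicitly in terms of the sequence $(d_n)_{n\in\N}$. The key initial observation is that $(\overleftarrow{G},G)$ is uniquely ergodic with unique invariant measure $\nu$, so the push-forward $\pi_*\mu$ of every $G$-invariant probability measure $\mu$ on $\overline{O_G(x)}$ equals $\nu$. Combined with the identity $\mathcal{T}=\pi^{-1}(\pi(\mathcal{T}))$ from Proposition~\ref{MEF_Toeplitz}, this yields
$$
\mu(\mathcal{T})=\mu\bigl(\pi^{-1}(\pi(\mathcal{T}))\bigr)=\nu(\pi(\mathcal{T}))
$$
for every $\mu\in\mathcal{M}(\overline{O_G(x)},G)$, giving (1)$\Leftrightarrow$(2) at once. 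The implication (3)$\Rightarrow$(2) is trivial, and for (2)$\Rightarrow$(3) one observes that on $\mathcal{T}$ the restriction of $\pi$ is a Borel bijection onto $\pi(\mathcal{T})$, so any two invariant measures concentrated on $\mathcal{T}$ must agree, as each is the push-forward of $\nu|_{\pi(\mathcal{T})}$ along the Borel inverse of $\pi|_{\mathcal{T}}$.

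To prove (1)$\Leftrightarrow$(4), I would first describe the fibres of $\pi$ explicitly. For $\xi=(g_n\Gamma_n)_{n\in\N}$ with $g_n\in D_n$, any $y\in g_n C_n$ satisfies $\Per(y,\Gamma_n,\alpha)=\Per(x,\Gamma_n,\alpha)\,g_n^{-1}$ for every $\alpha\in\Sigma$; normality of $\Gamma_n$ is exactly what allows one to commute the $\Gamma_n$-period through the translation by $g_n$. Consequently, $\pi^{-1}(\xi)$ contains a Toeplitz element (and is then a singleton) if and only if $G=\bigcup_{n\in\N} \Per(x,\Gamma_n)\,g_n^{-1}$, which in turn gives
$$
\pi(\mathcal{T})=\bigcap_{h\in G} E_h,\qquad E_h:=\bigl\{\xi\in\overleftarrow{G}: hg_n\in\Per(x,\Gamma_n)\text{ for some }n\in\N\bigr\}.
$$

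Next I would analyse the measure of each $E_h$. The sets $B_n:=\{\xi:g_n\in\Per(x,\Gamma_n)\}$ are clopen cylinders with $\nu(B_n)=d_n$, and they form an increasing sequence, because $\Per(x,\Gamma_n)\subseteq\Per(x,\Gamma_{n+1})$ together with $g_{n+1}\Gamma_n=g_n\Gamma_n$ forces $B_n\subseteq B_{n+1}$. Hence $\nu(E_{1_G})=\lim_{n\to\infty} d_n=d$, and since $E_h=h^{-1}\cdot E_{1_G}$ under the $G$-action on $\overleftarrow{G}$, the $G$-invariance of $\nu$ gives $\nu(E_h)=d$ for every $h\in G$. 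If $d=1$, countability of $G$ yields $\nu(\pi(\mathcal{T}))=\nu\bigl(\bigcap_{h\in G} E_h\bigr)=1$; conversely, the inclusion $\pi(\mathcal{T})\subseteq E_{1_G}$ forces $d=1$ whenever $\nu(\pi(\mathcal{T}))=1$.

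The main technical hurdle is the bookkeeping between the right shift action on $\Sigma^G$, the left $G$-action on $\overleftarrow{G}$, and the fundamental-domain representatives $g_n\in D_n$; in particular, the identification $\Per(y,\Gamma_n)=\Per(x,\Gamma_n)\,g_n^{-1}$ for $y\in g_n C_n$ is the place where normality of the $\Gamma_n$ enters in an essential way. Once this description of the fibres is available, the remaining steps reduce to standard measure-theoretic manipulations: unique ergodicity of the odometer, countable additivity applied to the increasing union defining $E_{1_G}$, and countable subadditivity applied to the intersection $\bigcap_{h\in G} E_h$.
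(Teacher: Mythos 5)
The paper offers no proof of this proposition: it is imported verbatim from \cite[Proposition 3.3]{CeCoGo23}, so there is no internal argument to compare yours against. Judged on its own, your proof is essentially correct and is assembled from ingredients the paper does supply: the identity $\mathcal{T}=\pi^{-1}(\pi(\mathcal{T}))$ from \eqref{Sec2:Toeplitzpoints}, unique ergodicity of the odometer (so $\pi_*\mu=\nu$ for every invariant $\mu$), and the description of singleton fibres via $\Per(\xi)$, which is precisely Lemma~\ref{Characterization_Phi}. Your computation $\nu(E_{1_G})=\lim_n d_n=d$ via the increasing clopen sets $B_n$, the identity $E_h=h^{-1}\cdot E_{1_G}$, and the countable-intersection argument are all sound; your explicit description of $\pi(\mathcal{T})=\bigcap_{h\in G}E_h$ also settles its Borel measurability, which is needed for statement (1) to make sense.

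Two points deserve attention. First, the equality $\mu(\mathcal{T})=\nu(\pi(\mathcal{T}))$ for every invariant $\mu$ gives (2)$\Rightarrow$(1) ``at once'', but not (1)$\Rightarrow$(2): that direction requires producing at least one invariant measure, which is not automatic, since amenability of $G$ is not assumed in this subsection (indeed \cite{CeCoGo23} is specifically concerned with non-amenable groups). The fix is already present in your write-up, just misplaced: the push-forward of $\nu|_{\pi(\mathcal{T})}$ under the Borel inverse of $\pi|_{\mathcal{T}}$ (Lusin--Souslin) is an invariant probability measure giving full mass to $\mathcal{T}$, and it should be invoked to prove (1)$\Rightarrow$(2), not only for the uniqueness step (2)$\Rightarrow$(3). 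Second, the identity $\Per(y,\Gamma_n,\alpha)=\Per(x,\Gamma_n,\alpha)g_n^{-1}$ for $y\in g_nC_n$ genuinely uses normality of the $\Gamma_n$ (without it one only gets a statement involving the conjugates $g_n^{-1}\Gamma_n g_n$); a period structure need not consist of normal subgroups, so you should state this hypothesis explicitly --- it is consistent with the paper's conventions, since Section~\ref{sec: regularity} already invokes Lemma~\ref{lem: ex. of seq. of fundamental domains}, which is formulated for normal subgroups.
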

If the Toeplitz array $x\in\Sigma^G$ satisfies the previous statements, we say that it is a \textbf{regular Toeplitz array} and call $\overline{O_G(x)}$ a \textbf{regular Toeplitz subshift}.
If $x\in\Sigma^G$ does not satisfy the previous statements, we say that it is \textbf{irregular}.
See \cite{FK18} and \cite{LaStra18}  for more details about regularity.

\subsection{Semicocycles} \label{sec: semicocycles}  

The following definitions and some results are taken from \cite{CoPe08}  (a more general setting can be found in \cite{FK18}),  and are natural generalizations of the case $G=\Z$ (c.f.\ \cite{Do05}).
Suppose that $G$ is a residually finite group with a sequence $(\Gamma_n)_{n\in\N}$ of finite index subgroups with trivial intersection.
Furthermore, let $\overleftarrow{G} = \varprojlim(G/\Gamma_n,\phi_n)$ be the associated $G$-odometer and $\tau: G \to \overleftarrow{G}$ be the canonical embedding.
By identifying $G$ with $\tau(G)$, the metric $d$ on $\overleftarrow{G}$ (as defined in \eqref{eq: metric odometer}) induces a metric on $G$, which we shall also denote by $d$.
Let $K$ be a compact metric space.
We call a function $\varphi: G \to K$ a {\bf semicocycle} on $\overleftarrow{G}$ if $\varphi$ is continuous with respect to this induced metric.
We naturally equip the set $K^G$ of all functions $\varphi: G \to K$ with the product topology and with the shift $G$-action $(g\varphi)(h) = \varphi(hg)$, for every $g,h\in G$, in order to obtain a dynamical system $(K^G,G)$. 
Recall that a point $x$ in a topological dynamical system $(X,G)$ is called \textbf{regularly recurrent} if, for any open neighbourhood $U$ of $x$, the set $\{g\in G\mid gx\in U\}$ contains a cocompact (in our setting finite index) subgroup of $G$. 

\begin{proposition}\label{semicocycles}
Let $\varphi\in K^G$ be non-periodic. The following statements are equivalent:
\begin{enumerate}   
  \item  There exists an odometer $\overleftarrow{G}$ defined from a nested sequence of finite index subgroups of $G$ with trivial intersection  such that $\varphi$ is a semicocycle on $\overleftarrow{G}$.
  \item $\varphi$ is a regularly recurrent point of $(K^G,G)$.
  \end{enumerate}
\end{proposition}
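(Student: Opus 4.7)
The plan is to prove the equivalence by unfolding both definitions in terms of the action of finite index subgroups of $G$ on $\varphi$. Two basic observations drive the argument: (i) under the identification of $G$ with $\tau(G)\subseteq\overleftarrow{G}$, one has $d(g,h)\leq 1/2^n$ if and only if $g^{-1}h\in\Gamma_n$; and (ii) any basic open neighbourhood of $\varphi$ in the product topology on $K^G$ is prescribed by a finite subset of $G$ together with a tolerance.

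For the direction $(1)\Rightarrow(2)$, I would use (i) to translate continuity of $\varphi$ at a point $g\in G$ into the statement that, for every $\varepsilon>0$, there exists $n(g,\varepsilon)\in\N$ with $d_K(\varphi(g\gamma),\varphi(g))<\varepsilon$ for all $\gamma\in\Gamma_{n(g,\varepsilon)}$. Given an open neighbourhood $U=\{\psi:d_K(\psi(g),\varphi(g))<\varepsilon,\ g\in F\}$ of $\varphi$ with $F\subseteq G$ finite, setting $n=\max_{g\in F}n(g,\varepsilon)$ yields $(\gamma\varphi)(g)=\varphi(g\gamma)\in B_\varepsilon(\varphi(g))$ for every $g\in F$ and every $\gamma\in\Gamma_n$. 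Hence $\Gamma_n\cdot\varphi\subseteq U$, which is precisely regular recurrence of $\varphi$.

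For the direction $(2)\Rightarrow(1)$, I first exploit that $K^G$ is second countable (since $G$ is countable and $K$ is a compact metric space) to fix a decreasing countable neighbourhood basis of $\varphi$. Concretely, enumerating $G=\{g_1,g_2,\ldots\}$, I set $U_m=\{\psi\in K^G:d_K(\psi(g_i),\varphi(g_i))<1/m\text{ for all }i\leq m\}$, which satisfies $\bigcap_m U_m=\{\varphi\}$. Regular recurrence then supplies finite index subgroups $\Lambda_m\leq G$ with $\Lambda_m\cdot\varphi\subseteq U_m$; I set $\Gamma_n=\bigcap_{m\leq n}\Lambda_m$, obtaining a decreasing sequence of finite index subgroups. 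Non-periodicity of $\varphi$ enters here: any $g\in\bigcap_n\Gamma_n\subseteq\bigcap_m\Lambda_m$ satisfies $g\varphi\in\bigcap_m U_m=\{\varphi\}$, hence $g\varphi=\varphi$ and so $g=1_G$. The semicocycle property with respect to the resulting odometer is then immediate: for $g_i\in G$ and $\varepsilon>0$, choosing $m\geq i$ with $1/m<\varepsilon$ gives $d_K(\varphi(g_i\gamma),\varphi(g_i))<1/m<\varepsilon$ for every $\gamma\in\Gamma_m\subseteq\Lambda_m$.

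The argument is essentially a bookkeeping exercise and no substantial obstacle is anticipated. The only delicate point is the choice of neighbourhood basis in $(2)\Rightarrow(1)$, which must simultaneously shrink in tolerance and exhaust the enumeration of $G$ so that $\bigcap_m U_m=\{\varphi\}$; this is precisely what converts non-periodicity into the trivial intersection of the $\Gamma_n$.
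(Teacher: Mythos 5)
Your proof is correct and follows essentially the same route as the paper: both directions translate continuity with respect to the odometer metric into the containment $\Gamma_n\cdot\varphi\subseteq U$ for basic neighbourhoods $U$, and the converse direction builds the subgroups $\Gamma_n$ as finite intersections of the groups supplied by regular recurrence along a shrinking neighbourhood basis, using non-periodicity to force $\bigcap_n\Gamma_n=\{1_G\}$. Your explicit choice of basis $U_m$ (with $F_m=\{g_1,\ldots,g_m\}$ and tolerance $1/m$) is just a concrete instance of the paper's general exhaustion $(F_n,\varepsilon_n)$, so no substantive difference.
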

\begin{proof}
 Let $F\subseteq G$ be a finite set and $V_g\subseteq K$ be an open neighborhood of $\varphi(g)$, for every $g\in F$. The set
$V=\{f\in K^G\mid  f(g) \in V_g, \forall g \in F\}$ is an open neighborhood of $\varphi$ in $K^G$. Moreover, observe that sets of this form constitute a neighborhood basis of $\varphi$. 

Suppose that $\varphi$ is a semicocycle on an odometer $\overleftarrow{G}$, defined from the sequence $(\Gamma_n)_{n\in\mathbb{N}}$  of finite index subgroups of $G$  with trivial intersection. 
The continuity of $\varphi$ implies that there exists $k\in\mathbb{N}$ such that if $h\in G$ verifies $d(\tau(h),\tau(g))\leq \frac{1}{2^k}$, then $\varphi(h)\in V_g$, for every $g\in F$.  Since $d(\tau(g\gamma),\tau(g))\leq \frac{1}{2^k}$, for every $\gamma \in \Gamma_k$, then $\varphi(g\gamma)=(\gamma\varphi)(g)\in V_g$, for every $\gamma\in \Gamma_k$ and $g\in F$.   We have that $\gamma\varphi \in V$, for every $\gamma\in \Gamma_k$. This shows that $\varphi$ is regularly recurrent.   

Suppose that $\varphi$ is a regularly recurrent point of $\Sigma^G$. Since $G$ is countable, there exists an increasing sequence $(F_n)_{n\in\mathbb{N}}$ of finite subsets of $G$ such that $G=\bigcup_{n\in\mathbb{N}}F_n$. Let $(\varepsilon_n)_{n\in\mathbb{N}}$ be a decreasing sequence of positive numbers converging to $0$. Let $V_n$ be the neighborhood $V$ for  $F=F_n$ and $V_g=B_{\varepsilon_n}(\varphi(g))$, for every $g\in F_n$ and  $n\in\mathbb{N}$. The regular recurrence of $\varphi$ implies that for every $n\in\mathbb{N}$, there exists a finite index subgroup $H_n$, such that $\gamma\varphi\in V_n$, for every $\gamma\in H_n$. Since $\bigcap_{n\in\mathbb{N}}V_n=\{\varphi\}$ and $\varphi$ is non-periodic, we have $\bigcap_{n\in\mathbb{N}}H_n=\{1_G\}$. Let us define $\Gamma_1=H_1$ and $\Gamma_{n+1}=\Gamma_n\cap H_{n+1}$, for every $n\in \mathbb{N}$. Observe that $(\Gamma_n)_{n\in\mathbb{N}}$ is a decreasing sequence of finite index  subgroups of $G$ with trivial intersection. Let $\overleftarrow{G}$ be the odometer associated to $(\Gamma_n)_{n\in\mathbb{N}}$.
Let $g\in G$ and $n\in\mathbb{N}$ be such that $g\in F_n$ and let $k_n\geq n$ be such that $\frac{1}{2^{k_n}}<\varepsilon_n$. 
Let $h\in G$ be such that $d(\tau(h),\tau(g))\leq \frac{1}{2^{k_n}}$. We have that $h\Gamma_n =g\Gamma_n$, which implies  that $h=g\gamma$ for some $\gamma\in \Gamma_n$. Then $\varphi(h)=\varphi( g\gamma)=(\gamma\varphi)(g)\in B_{\varepsilon_n}(\varphi(g))$. This shows that $\varphi$ is continuous in $g\in G$ and hence a semicocycle on $\overleftarrow{G}$.
\end{proof}

Since a semicocycle $\varphi$ is essentially a continuous map which is defined on a dense subset of $\overleftarrow{G}$, it is natural to attempt to extend $\varphi$ continuously to some larger subset of $\overleftarrow{G}$. 
For this we define
$$ \Phi = \overline{\{(g,\varphi(g)):g\in G\}} \tm \overleftarrow{G} \times K,$$
where the closure is taken in the product topology.
Furthermore, given $\xi \in \overleftarrow{G}$, we define $\Phi(\xi) = \{k\in K:(\xi,k)\in \Phi\}$.
Lastly, we set $C_\varphi= \{\xi\in \overleftarrow{G}:\# \Phi(\xi) = 1\}$ and $D_\varphi = \overleftarrow{G}\setminus C_\varphi$. 
By continuity of $\varphi$ it follows for $g\in G$ that $\Phi(g) = \{\varphi(g)\}$, so in particular $g\in C_\varphi$.
Thus, we can extend $\varphi$ to $C_\varphi$ by letting $\varphi(\xi)$ be the unique element of $\Phi(\xi)$.
Since we have $G\subseteq C_\varphi$, the set $C_\varphi$ is always a dense $G_\delta$ subset of $\overleftarrow{G}$. 
In the case that $K$ is finite, $C_\varphi$ is even open.
We call the semicocycle $\varphi$ {\bf invariant under no rotation} if 
$\Phi(g\xi_1)=\Phi(g\xi_2)$ for every $g\in G$ implies $\xi_1=\xi_2$. 

\begin{proposition}\label{semicocycles_toeplitz}
Let $\Sigma$ be  a finite alphabet and $x\in \Sigma^G$ be a non-periodic element. Then the following statements are equivalent:
\begin{enumerate}
\item $x$ is a   Toeplitz array.

\item $x$ is a regularly recurrent point of the full shift $(\Sigma^G,G)$.

\item The map $x:G\to \Sigma$  is a semicocycle on the odometer $\overleftarrow{G}$, where $(\overleftarrow{G},G)$ is the maximal equicontinuous factor of $(\overline{O_G(x)},G)$.
\end{enumerate}
\end{proposition}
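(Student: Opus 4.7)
The strategy is to establish the cycle $(1)\Rightarrow(3)\Rightarrow(2)\Rightarrow(1)$. Throughout, I would exploit the fact that, since $\Sigma$ is finite, a neighbourhood basis of $x$ in the full shift $(\Sigma^G,G)$ is given by cylinder sets of the form $[x|_F]:=\{y\in\Sigma^G:y|_F=x|_F\}$ indexed by finite subsets $F\subseteq G$, and that for $y\in\Sigma^G$ the containment $\gamma y\in[x|_F]$ is equivalent to $y(g\gamma)=x(g)$ for all $g\in F$.

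First I would prove $(1)\Rightarrow(3)$. Assuming $x$ is Toeplitz, Proposition~\ref{MEF_Toeplitz} provides a period structure $(\Gamma_n)_{n\in\N}$ of $x$ whose associated $G$-odometer $\overleftarrow{G}$ is the maximal equicontinuous factor of $(\overline{O_G(x)},G)$. To verify the semicocycle property on this specific $\overleftarrow{G}$, I fix $g\in G$; since $G=\bigcup_n\Per(x,\Gamma_n)$, there exists $n\in\N$ with $g\in\Per(x,\Gamma_n)$, that is, $x(g\gamma)=x(g)$ for every $\gamma\in\Gamma_n$. Any $h\in G$ with $d(\tau(h),\tau(g))\leq 2^{-n}$ satisfies $h\Gamma_n=g\Gamma_n$ by the explicit form of the metric in~\eqref{eq: metric odometer}, so $h=g\gamma$ for some $\gamma\in\Gamma_n$, whence $x(h)=x(g)$. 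Since $\Sigma$ is discrete, this is precisely continuity of $x$ at $\tau(g)$ in the metric induced from $\overleftarrow{G}$, so $x$ is a semicocycle on $\overleftarrow{G}$.

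The implication $(3)\Rightarrow(2)$ is an immediate application of Proposition~\ref{semicocycles} with $K=\Sigma$ (which is compact in the discrete topology), using the standing assumption that $x$ is non-periodic. For $(2)\Rightarrow(1)$, assume $x$ is regularly recurrent. Given any $g\in G$, the cylinder $U_g:=\{y\in\Sigma^G:y(g)=x(g)\}$ is an open neighbourhood of $x$, so regular recurrence yields a finite-index subgroup $\Gamma_g\leq G$ with $\Gamma_g\cdot x\subseteq U_g$. For every $\gamma\in\Gamma_g$ this reads $x(g\gamma)=(\gamma x)(g)=x(g)$, i.e.\ $g\in\Per(x,\Gamma_g)$. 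As $g\in G$ was arbitrary, this shows $x$ is a Toeplitz array.

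The main subtlety I anticipate is that statement (3) demands the odometer to be precisely the maximal equicontinuous factor of $(\overline{O_G(x)},G)$, whereas Proposition~\ref{semicocycles} only guarantees the existence of \emph{some} odometer on which $x$ becomes a semicocycle. This is exactly why I would route the argument as the cycle above rather than attempting a direct $(2)\Rightarrow(3)$: the identification of the correct odometer with the MEF is handled inside $(1)\Rightarrow(3)$ through Proposition~\ref{MEF_Toeplitz}, where the period structure supplies both the MEF and the sequence of subgroups needed to witness continuity of $x$.
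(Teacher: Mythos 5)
Your proof is correct and follows essentially the same route as the paper: the key implication $(1)\Rightarrow(3)$ is handled identically (period structure, Proposition~\ref{MEF_Toeplitz}, and continuity read off from the explicit metric \eqref{eq: metric odometer}), and $(3)\Rightarrow(2)$ is the same appeal to Proposition~\ref{semicocycles}. The only difference is that you close the cycle with a short direct cylinder-set argument for $(2)\Rightarrow(1)$, where the paper instead cites \cite{CoPe08} for the equivalence of (1) and (2); your direct argument is valid and makes the proof more self-contained.
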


\begin{proof}
The equivalence between (1) and (2) is consequence of \cite[Proposition 5 and Theorem 2]{CoPe08} and Proposition~\ref{semicocycles}. The fact that (3) implies (2) follows from Proposition~\ref{semicocycles}. Hence, it suffices to show (1) $\Rightarrow$ (3).

In order to do so, suppose that $x\in\Sigma^G$ is a Toeplitz array. Let $(\Gamma_n)_{n\in\mathbb{N}}$ be a period structure of $x$. Since $x$ is non-periodic, this is a decreasing sequence of finite index  subgroups with trivial intersection. Let $\overleftarrow{G}$ be the odometer associated to $(\Gamma_n)_{n\in\mathbb{N}}$. Proposition~\ref{MEF_Toeplitz} implies  that $(\overleftarrow{G},G)$  is the maximal equicontinuous factor of $(\overline{O_G(x)},G)$. Let $g\in G$ and $n\in\mathbb{N}$ be such that $g\in\Per(x,\Gamma_n)$. Let $h\in G$ be such that $d(\tau(h),\tau(g))\leq \frac{1}{2^n}$. 
This implies there exists $\gamma\in \Gamma_n$ such that $h= g \gamma$. Since $g\in \Per(x,\Gamma_n)$, we have $x(h)=x(g\gamma)=x(g)$, which means that $x:G\to \Sigma^{G}$ is continuous on $G$, with respect the topology induced by $\overleftarrow{G}$. This shows that $x:\Sigma\to G$ is a semicocycle on $\overleftarrow{G}$.
   \end{proof}

Let $x\in\Sigma^G$ be a non-periodic Toeplitz array. According to Proposition~\ref{semicocycles_toeplitz}, the map $g\in G\to x(g)\in\Sigma$ is a semicocycle on $\overleftarrow{G}$, with $(\overleftarrow{G},G)$ the maximal equicontinuous factor of $(\overline{O_G(x)},G)$. Proposition~\ref{MEF_Toeplitz} implies there exists an almost one-to-one factor map  $\pi:\overline{O_G(x)}\to \overleftarrow{G}$ such that $\pi^{-1}\left(\{(1_G\Gamma_n)_{n\in\mathbb{N}}\}\right)=\{x\}$.\\
Let $\Phi=\overline{\{(g,x(g)):g\in G\}}$, and $\Phi(\xi) = \{a\in \Sigma:(\xi,a)\in \Phi\}$, for every $\xi\in \overleftarrow{G}$.   
For $\xi=(z_n\Gamma_n)_{n\in\mathbb{N}}\in \overleftarrow{G}$, we denote
$$
\Per(\xi)=\bigcup_{n\in\mathbb{N}}\Per(x,\Gamma_n)z_n^{-1}.
$$

\begin{lemma}\label{Characterization_Phi}
Let $x\in \Sigma^G$ be a non-periodic Toeplitz array, $\overleftarrow{G}$ its maximal equicontinuous factor, and $\pi:X\to \overleftarrow{G}$ be an almost one-to-one factor map such that\\ $\pi^{-1}\left(\{(1_G\Gamma_n)_{n\in\mathbb{N}}\}\right)=\{x\}$, where $(\Gamma_n)_{n\in\mathbb{N}}$ is a decreasing sequence of finite index subgroups with trivial intersection that defines $\overleftarrow{G}$. 
Then, for every $\xi\in \overleftarrow{G}$, we have 
$$
 \Phi(\xi)=\{y(1_G): y\in \pi^{-1}(\{\xi\})\}. 
$$
Furthermore, if $g\in G$, then $\#\Phi(g\xi)=1$ if and only if $g \in \Per(\xi)$.

\end{lemma}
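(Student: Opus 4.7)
The plan is to prove the two assertions in sequence, with the key observation being that $\pi(gx)=\tau(g)$ for every $g\in G$. This follows from equivariance of $\pi$ together with the hypothesis $\pi^{-1}(\{(1_G\Gamma_n)_{n\in\N}\})=\{x\}$, which forces $\pi(x)$ to be the identity of $\overleftarrow{G}$. Combined with $(gx)(1_G)=x(g)$, which is immediate from the shift convention, this will translate statements about $\Phi$ (asymptotics of $x$) into statements about fibres of $\pi$.

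For the identity $\Phi(\xi)=\{y(1_G):y\in\pi^{-1}(\{\xi\})\}$ I would argue by two inclusions. If $a\in\Phi(\xi)$, I pick a sequence $g_n\in G$ with $\tau(g_n)\to\xi$ and, using finiteness of $\Sigma$, $x(g_n)=a$ eventually. Extracting a convergent subsequence of $(g_nx)$ in $\overline{O_G(x)}$, any limit $y$ satisfies $y(1_G)=\lim x(g_n)=a$ and $\pi(y)=\lim\tau(g_n)=\xi$. Conversely, if $y\in\pi^{-1}(\{\xi\})$ with $y(1_G)=a$, I choose $g_n\in G$ with $g_nx\to y$ (possible as $y\in\overline{O_G(x)}$). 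Then $\tau(g_n)=\pi(g_nx)\to\pi(y)=\xi$ and $x(g_n)=(g_nx)(1_G)\to y(1_G)=a$, so $(\xi,a)\in\Phi$.

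For the second assertion, the central reduction is that $\#\Phi(g\xi)=1$ if and only if there exists $k\in\N$ such that $x$ is constant on the coset $gz_k\Gamma_k$, where $\xi=(z_n\Gamma_n)_{n\in\N}$. The \emph{if} direction is immediate: if $x\equiv\alpha$ on $gz_k\Gamma_k$, then any sequence $(g_n)\tm G$ with $\tau(g_n)\to g\xi$ lies in $gz_k\Gamma_k$ eventually (cylinders of level $k$ are exactly the $1/2^k$-balls of $\overleftarrow{G}$), so $x(g_n)=\alpha$ eventually and $\Phi(g\xi)=\{\alpha\}$. For the converse, assuming no such $k$ exists, I select for each $k$ two elements $h_k,h_k'\in gz_k\Gamma_k$ with $x(h_k)\neq x(h_k')$; finiteness of $\Sigma$ lets me pass to a subsequence along which $x(h_k)=a$ and $x(h_k')=a'$ with $a\neq a'$. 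Using the nesting $\Gamma_{k+1}\tm\Gamma_k$ and the compatibility $z_k\Gamma_j=z_j\Gamma_j$ for $j\leq k$, one checks $\tau(h_k),\tau(h_k')\to g\xi$, so $\{a,a'\}\tm\Phi(g\xi)$, contradicting the singleton hypothesis.

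To conclude, I would translate the coset-constancy condition into membership in $\Per(\xi)$: $x\equiv\alpha$ on $gz_k\Gamma_k$ is precisely the statement $gz_k\in\Per(x,\Gamma_k,\alpha)$, hence $gz_k\in\Per(x,\Gamma_k)$, i.e.\ $g\in\Per(x,\Gamma_k)z_k^{-1}$. Taking the union over $k\in\N$ gives exactly $\Per(\xi)$, finishing the equivalence. I do not foresee any serious obstacle beyond bookkeeping among the several conventions in play (the left shift action, the identification of $G$ with $\tau(G)$, and convergence in $\overleftarrow{G}$); the only slightly delicate point is the verification that $h_k\in gz_k\Gamma_k$ forces $\tau(h_k)\to g\xi$, which relies on the coherence $z_k\Gamma_j=z_j\Gamma_j$ for $j\leq k$ built into the definition of $\overleftarrow{G}$.
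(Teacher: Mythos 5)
Your proposal is correct and follows essentially the same route as the paper: both parts are proved by manipulating sequences $(g_i)\subseteq G$ with $\tau(g_i)\to\xi$, using that convergence in $\overleftarrow{G}$ means eventual membership in the cosets $gz_k\Gamma_k$, and extracting constant subsequences via finiteness of $\Sigma$. The only (cosmetic) difference is that in the second part you work directly with the definition of $\Phi$ as a closure, whereas the paper detours through the fibre description from the first part; the substance of the argument is identical.
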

\begin{proof}
Let $\xi=(z_n\Gamma_n)_{n\in\mathbb{N}}$.
Suppose that $a\in \Phi(\xi)$. Then there exists a sequence $(g_i)_{i\in\mathbb{N}}$ in $G$ such that $g_i\to \xi$ and $x(g_i)=a$, for every (sufficiently large) $i\in \mathbb{N}$. Let $y\in \overline{O_G(x)}$ be an accumulation point of $(g_ix)_{i\in\mathbb{N}}$. We have $y(1_G)=a$ and $\pi(y)=\xi$.  

Conversely, suppose that $y\in \pi^{-1}(\{\xi\})$. Let $(g_ix)_{i\in\mathbb{N}}$ be a sequence that converges to $y$. This implies that $x(g_i)\to y(1_G)$ and $g_i\to \xi$, from which we get $y(1_G)\in \Phi(\xi)$. 
This shows the first statement.

Now, let $g\in G$. Suppose that $g\in \Per(x,\Gamma_n,\alpha)z_n^{-1}$, for some $\alpha\in\Sigma$ and $n\in\mathbb{N}$. If $a\in \Phi(g\xi)$ then there exists $y\in \pi^{-1}(\{g\xi\})$ such that $y(1_G)=a$. On the other hand, if $(g_ix)_{i\in\mathbb{N}}$ is a sequence that converges to $y$, then for every sufficiently large $i$ we have $g_i\in gz_n\Gamma_n$ and $x(g_i)=y(1_G)=a$. Since $g_i\in gz_n\Gamma_n$, there exists $\gamma\in \Gamma_n$ such that $g_i=gz_n\gamma$. This implies $a=x(g_i)=x(gz_n\gamma)=\alpha$, because $gz_n\in\Per(x,\Gamma_n,\alpha)$. This shows that $\#\Phi(g\xi)=1$. 

Conversely, if $g\notin \Per(x,\Gamma_n)z_n^{-1}$ for every $n\in \mathbb{N}$, then for every $n\in \mathbb{N}$ there exists $\gamma_n\in\Gamma_n$ such that $x(gz_n\gamma_n)\neq x(gz_n)$. Let $y_1$ and $y_2$ be limits  of  simultaneously convergent subsequences $((gz_{n_i})x)_{i\in\mathbb{N}}$ and $((gz_{n_i}\gamma_{n_i})x)_{n\in\mathbb{N}}$, respectively. We have that $y_1,y_2\in \pi^{-1}(\{g\xi\})$, and for every sufficiently large $i$, $y_1(1_G)=x(gz_{n_i})\neq x(gz_{n_i}\gamma_{n_i})=y_2(1_G)$, which implies that $\Phi(g\xi)$ has at least two different elements.
\end{proof}
 
\begin{remark}\label{rem: Toeplitz is invariant under no rotations}
It is possible to show that $\Phi$ of Lemma~\ref{Characterization_Phi} satisfies $\Phi(g\xi_1)=\Phi(g\xi_2)$ for every $g\in G$ implies $\xi_1=\xi_2$, meaning that $x$ as a semicocycle on $\overleftarrow{G}$ is invariant under no rotation (as expected from \cite{FK18}). 
\end{remark}

\begin{theorem}\label{thm: chara. Toeplitz semicocycle}
 Let $\Sigma$  be a finite alphabet and $x\in\Sigma^G$ be a non-periodic element. Let $\overleftarrow{G}$ be a free odometer with group structure. 
 The following statements are equivalent:
 \begin{enumerate}
\item   $x$ is  Toeplitz  and $(\overleftarrow{G},G)$ is the maximal equicontinuous factor of $(\overline{O_G(x)},G)$.

\item The map $G\to \Sigma,\:g\mapsto x(g)$ is a semicocycle on $\overleftarrow{G}$   invariant under no rotation. 
\end{enumerate}
Moreover, in this case we have $$\nu(\{\xi\in\overleftarrow{G}: \#\Phi(\xi)=1  \}) = d,$$ where $\nu$ is the normalized Haar measure of $\overleftarrow{G}$,  $\Phi$ is defined as above and $d= \lim_{n\to\infty} d_n$ is the regularity constant of $x$ defined in Section~\ref{sec: regularity}.
\end{theorem}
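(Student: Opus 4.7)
The theorem splits into the equivalence (1)$\Leftrightarrow$(2) and the identity for $\nu$; I would handle the equivalence first, building on Propositions~\ref{semicocycles} and~\ref{semicocycles_toeplitz}. For (1)$\Rightarrow$(2), the semicocycle property is immediate from Proposition~\ref{semicocycles_toeplitz}. For invariance under no rotation I would first combine Lemma~\ref{Characterization_Phi} with the $G$-equivariance $\pi^{-1}(g\xi) = g\,\pi^{-1}(\xi)$ and the identity $(gy)(1_G) = y(g)$ to rewrite
\[ \Phi(g\xi) = \{y(g) : y\in \pi^{-1}(\xi)\}. \]
When $\xi_1$ lies in the $G$-orbit of $\pi(x)$, the fiber $\pi^{-1}(\xi_1)$ is a singleton, hence every $\Phi(g\xi_1)$ is a singleton. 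If $\Phi(g\xi_2)=\Phi(g\xi_1)$ for all $g$, the fiber $\pi^{-1}(\xi_2)$ must also reduce to a singleton equal to the same shifted copy of $x$, so $\xi_2=\xi_1$. The general case then follows via a density/upper-semicontinuity argument using the almost one-to-one character of $\pi$, as indicated in Remark~\ref{rem: Toeplitz is invariant under no rotations}.

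\textbf{Second direction.} For (2)$\Rightarrow$(1), Proposition~\ref{semicocycles} yields that $x$ is regularly recurrent and Proposition~\ref{semicocycles_toeplitz} then identifies it as a Toeplitz array. It remains to show $(\overleftarrow{G},G)$ is the MEF of $(\overline{O_G(x)},G)$. Let $(\overleftarrow{G}',G)$ be the MEF, which by Proposition~\ref{MEF_Toeplitz} is itself an odometer. Since $(\overleftarrow{G},G)$ is equicontinuous and arises as a factor of $\overline{O_G(x)}$ (from the first-coordinate projection of the orbit closure of $(1_{\overleftarrow{G}},x)$ in $\overleftarrow{G}\times\overline{O_G(x)}$), maximality of the MEF gives a factor map $\alpha:\overleftarrow{G}'\to\overleftarrow{G}$. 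Writing $\Phi'$ for the analogue of $\Phi$ associated to $\overleftarrow{G}'$, one checks the decomposition $\Phi(\xi) = \bigcup_{\xi'\in\alpha^{-1}(\xi)}\Phi'(\xi')$ and its $G$-translates; invariance under no rotation on $\overleftarrow{G}$, combined with the formula in the previous paragraph, then forces $\alpha$ to be injective and hence a conjugacy. The main obstacle throughout the equivalence is precisely this two-way interplay between invariance under no rotation and the almost one-to-one structure, which requires a careful density/compactness argument not entirely trivialised by the earlier lemmas.

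\textbf{Measure identity.} The cleanest part is the formula for $\nu$. By Lemma~\ref{Characterization_Phi} applied at $g=1_G$, $\#\Phi(\xi)=1$ if and only if $1_G\in\Per(\xi) = \bigcup_n \Per(x,\Gamma_n)z_n^{-1}$, equivalently iff there exists $n$ with $z_n\in\Per(x,\Gamma_n)$, where $\xi=(z_k\Gamma_k)_{k\in\N}$. Setting $A_n := \{\xi\in\overleftarrow{G} : z_n\in\Per(x,\Gamma_n)\}$, the fact that $\Per(x,\Gamma_n)$ is a union of left cosets of $\Gamma_n$ together with $\Per(x,\Gamma_n)\subseteq\Per(x,\Gamma_{n+1})$ yields $A_n\subseteq A_{n+1}$ and $\{\xi:\#\Phi(\xi)=1\} = \bigcup_n A_n$. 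Each $A_n$ is a disjoint union of level-$n$ cylinders, one per coset in $\Per(x,\Gamma_n)/\Gamma_n$, and by~\eqref{measure_definition} each such cylinder carries $\nu$-mass $1/\#D_n$. Hence $\nu(A_n) = \#(D_n\cap\Per(x,\Gamma_n))/\#D_n = d_n$, and continuity of $\nu$ from below gives $\nu(\{\xi:\#\Phi(\xi)=1\}) = \lim_n d_n = d$.
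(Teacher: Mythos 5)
Your computation of the measure identity is correct and coincides with the paper's: via Lemma~\ref{Characterization_Phi} one gets $\#\Phi(\xi)=1\iff 1_G\in\Per(\xi)$, and the set in question is the increasing union of the clopen sets $A_n$, each consisting of $\#(D_n\cap\Per(x,\Gamma_n))$ cylinders of $\nu$-mass $1/\#D_n$. Likewise, for (1)$\Rightarrow$(2) you delegate the invariance under no rotation to Remark~\ref{rem: Toeplitz is invariant under no rotations}, exactly as the paper does; be aware, though, that the ``general case'' is not merely a density/upper-semicontinuity argument --- the paper's proof of that remark runs through essential periods, and your sketch only covers the case where one of the two points lies on the orbit of $\pi(x)$.

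The genuine gap is in (2)$\Rightarrow$(1). The paper disposes of the MEF claim by citing \cite[Theorem~2.5]{FK18} or \cite[Corollary~5]{CoPe08}; you instead assert that $(\overleftarrow{G},G)$ ``arises as a factor of $\overline{O_G(x)}$ from the first-coordinate projection of the orbit closure $Z$ of $(1_{\overleftarrow{G}},x)$''. This is precisely the nontrivial point, not a given: $Z\subseteq\overleftarrow{G}\times\overline{O_G(x)}$ projects onto both coordinates, but it induces a factor map $\overline{O_G(x)}\to\overleftarrow{G}$ only if $Z$ is a graph over the subshift, i.e.\ if $p_2|_Z$ is injective, and this fails for semicocycles in general. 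Concretely, for $G=\Z$ and $p$ odd, a Toeplitz sequence $x$ with period structure $(p^n\Z)$ is also a semicocycle on $\overleftarrow{G}=\varprojlim\Z/2p^n\Z\cong(\Z/2\Z)\times\Z_p$; taking $n_i=p^i$ and $m_i=2p^i$ one has $n_ix\to x$ and $m_ix\to x$ while $\tau(n_i)\to(1,0)$ and $\tau(m_i)\to(0,0)$, so both $((1,0),x)$ and $((0,0),x)$ lie in $Z$ and $\overleftarrow{G}$ is not a factor of the subshift. Hence injectivity of $p_2|_Z$ must itself be extracted from invariance under no rotation, which is exactly the content of the cited results. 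Once this step is in doubt, the remainder of your argument is also oriented the wrong way: what one gets for free is a factor map $\alpha:\overleftarrow{G}\to Y$ onto the MEF $Y$ (because $Z$ is a minimal almost one-to-one extension of $\overleftarrow{G}$, whose MEF is therefore $\overleftarrow{G}$), with the decomposition $\Phi'(\eta)=\bigcup_{\xi\in\alpha^{-1}(\eta)}\Phi(\xi)$; and knowing that $\Phi(g\xi_1)$ and $\Phi(g\xi_2)$ are both contained in $\Phi'(g\eta)$ when $\alpha(\xi_1)=\alpha(\xi_2)=\eta$ does not yield $\Phi(g\xi_1)=\Phi(g\xi_2)$, so invariance under no rotation cannot be applied as stated. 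You should either carry out this step along the lines of the paper's essential-period argument or cite \cite[Theorem~2.5]{FK18} or \cite[Corollary~5]{CoPe08} as the paper does.
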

\begin{proof}
We have (1) implies  (2) due to Proposition~\ref{semicocycles_toeplitz} and Remark~\ref{rem: Toeplitz is invariant under no rotations}. Conversely, if $x:G\to \Sigma$ is a semicocycle on $\overleftarrow{G}$, then Proposition~\ref{semicocycles_toeplitz} implies that $x$ is Toeplitz. If in addition, $x$ is invariant under no rotation,   by \cite[Theorem 2.5]{FK18} or \cite[Corollary 5]{CoPe08} we have that $(\overleftarrow{G},G)$ is the maximal equicontinuous factor of $(\overline{O_G(x)},G)$.  
       
  Let $(\Gamma_n)_{n\in\mathbb{N}}$ be a decreasing sequence of normal finite index subgroups with trivial intersection that defines $\overleftarrow{G}$.  By Lemma~\ref{Characterization_Phi}, we have
  $$
  C_x=\{\xi\in\overleftarrow{G}: \#\Phi(\xi)=1  \}=\{\xi\in\overleftarrow{G}: 1_G\in \Per(\xi)\}.
  $$
  If $\xi=(z_n\Gamma_n)_{n\in\mathbb{N}}$ then $1_G\in \Per(\xi)$ if and only if there exists $n\in\mathbb{N}$ such that $z_n \in\Per(x,\Gamma_n)$. This implies that
  $$
  C_x=\bigcup_{n\in\mathbb{N}}\bigcup_{\gamma\in \Per(x,\Gamma_n)\cap D_n}\{(z_k\Gamma_k)_{k\in\mathbb{N}}\in\overleftarrow{G}: z_n\in \gamma\Gamma_n\},
  $$
which yields
    $$ \nu(C_x) = \lim_{n\to\infty} \frac{\#(\Per(x,\Gamma_n)\cap D_n)}{\#D_n} = d.$$
    \end{proof}

\section{Cut and project schemes and almost automorphic systems} \label{sect: CPS}

Let $G$ be a countable discrete group. The space of subsets of $G$ can be identified with $\{0,1\}^G$ through the transformation which maps $S\tm G$ to its indicator function.
We have seen in Section~\ref{sec: Toeplitz} that we can endow $\{0,1\}^G$ with the product topology and a shift action to obtain a topological dynamical system $(\{0,1\}^G,G)$, called the $0$-$1$ full $G$-shift.

The aim of this section is to show that the almost automorphic elements of $\{0,1\}^G$ can  be obtained as a model set via a suitably defined cut and project scheme (CPS). We provide the definition of these notions in the setting of $G$ being countable and discrete.\\

Let $H$ be a locally compact, second countable group. Further, let $\pi_1$ denote the projection $G\times H \to G$ to the first coordinate and $\pi_2:G\times H \to H$ the projection to the second coordinate. 
Suppose that $\mathcal{L}\subseteq G\times H$ is a lattice, that is, a discrete cocompact subgroup --- 
cocompactness meaning that the quotient space $(G\times H)/\mathcal{L}$ is compact with respect to the quotient topology. 
Since $G\times H$ is locally compact, it can be seen that this is equivalent to $\mathcal{L}$ being syndetic.
If additionally the restriction $(\pi_1)\vert_\mathcal{L}$ is injective and $\pi_2(\mathcal{L})$ is dense in $H$, then we call the triple $(G,H,\mathcal{L})$ a \textbf{cut and project scheme} (CPS).
Let $L = \pi_1(\mathcal{L})$.
In this situation, the so-called \textbf{star-map} $$\cdot^\star: L\to H\quad , \quad \: g \mapsto g^*=\pi_2\circ (\pi_1\vert_\mathcal{L})^{-1}(g)$$ is well-defined.  
Given a subset $W\subseteq H$, which we call a \textbf{window}, the corresponding \textbf{model set} (or cut and project set) $\Lambda(W)$ is given by
$$ \Lambda(W) = \{g \in L:g^\star \in W\}. $$


We call a window $W\subseteq H$ and its associated model set \textbf{proper} if $W$ is compact with $W=\ol{\mathrm{int}(W)}$.
Moreover, we call $W$ {\bf irredundant} if $Wh = W$ implies $h=1_H$, {\bf generic} if $\partial W \cap L^\star = \emptyset$, {\bf regular} if $\mu_H(\partial W)=0$ and {\bf irregular} if $\mu_H(\partial W)>0$, where $\mu_H$ denotes the (left) Haar measure on $H$. \\[5pt]
By definition, we can associate to the CPS $(G,H,\mathcal{L})$ a compact space
$$ \T := (G\times H)/ \mathcal{L},$$
which is called the {\bf torus} of the CPS and it is equipped with a natural $G$-action that turns $(\T,G)$ into a minimal dynamical system.

It turns out that this action is a factor of the dynamical system given by the closure of the orbit of the model set, a fact which is commonly referred to as {\em `torus parametrization'}. A proof is given in \cite{BaLeMo07}  for the case of locally compact abelian groups, but extends to the non-commutative setting with modest modifications (see \cite{BjHaPo18}, or \cite{DJL} for an overview). 
In the next section we give the details for the case of cut and project schemes $(G,H,\mathcal{L})$ such that $(H,\tau)$ is a metric compactification of $G$ and $\mathcal{L}=\{(g,\tau(g): g\in G\}$.
In this situation, one can see that the dynamical system $(\T,G)$ is in fact conjugate to $(H,G)$.

\subsection{Almost automorphic systems and model sets}\label{sec: almost auto as model set} In \cite{BaJäLe16}, it is shown that all Toeplitz sequences correspond to suitably defined model sets in $\mathbb{Z}$. The aim of this section is to extend that result to the setting of almost automorphic arrays over metrizable compactifications of countable discrete groups, leading, as a corollary, to a characterization of $0$-$1$-Toeplitz arrays as model sets within residually finite groups.
 
\medskip

If $(H,\tau)$ is a metric compactification of $G$, then $(G,H, \mathcal{L})$ is a cut and project scheme, where $\mathcal{L}=\{(g,\tau(g)): g\in G\}$. In this case, the star-map is simply the mapping $g\mapsto \tau(g)$ and the model set $\Lambda(W)$ is given by
$$
\Lambda(W)=\{g\in G: \tau(g)\in W\}.
$$
We identify the set $\Lambda(W)$ with the element $x_W\in \{0,1\}^G$ defined by
\begin{equation}\label{eq: def of model set array}
x_W(g)=1 \iff \tau(g)\in W.
\end{equation}
Hence, we will also refer to the array $x_W$ as a model set.
Remember that $W$ is proper if $W$ is compact with $W=\overline{\mathrm{int}(W)}$, and 
 $W$ is {\bf irredundant} if $W\xi = W$ implies $\xi=1_{H}$. In this setting,  $W$ is {\bf generic} if $\partial W \cap \tau(G) = \emptyset$.

Let us define $$\mathcal{W}(H)=\{W\subseteq H: W \mbox{ is proper, irreduntant and generic}\}. $$
The goal of this section is to show that there is a one-to-one correspondence between $\mathcal{W}(H)$ and the elements $x\in\{0,1\}^G$ that are almost automorphic on $H$. First, note that denseness of $\tau(G)$ in $\overleftarrow{G}$ easily implies 

\begin{lemma}\label{interior}
If $W=\overline{\mathrm{int}(W)}$, then 
$$
 \overline{\tau(G)\cap \mathrm{int}(W)}=\overline{\tau(G)\cap W}=W.
$$
\end{lemma}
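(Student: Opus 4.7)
The plan is to establish the chain of inclusions $\overline{\tau(G)\cap \mathrm{int}(W)} \subseteq \overline{\tau(G)\cap W} \subseteq W \subseteq \overline{\tau(G)\cap \mathrm{int}(W)}$, which immediately yields equality throughout. The first two inclusions are essentially formal, and the last (the genuinely substantive one) reduces to the combination of two facts already in play: $W$ has dense interior in itself (this is just the hypothesis $W=\overline{\mathrm{int}(W)}$), and $\tau(G)$ is dense in $H$.

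For the trivial inclusions: $\mathrm{int}(W)\subseteq W$ gives $\overline{\tau(G)\cap\mathrm{int}(W)}\subseteq \overline{\tau(G)\cap W}$. Since $W=\overline{\mathrm{int}(W)}$ is closed, and $\tau(G)\cap W\subseteq W$, taking closure on both sides yields $\overline{\tau(G)\cap W}\subseteq W$.

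The one step requiring a bit of care is $W\subseteq \overline{\tau(G)\cap\mathrm{int}(W)}$. Here I would pick $w\in W$ and any open neighborhood $U$ of $w$ in $H$; the goal is to produce a point of $\tau(G)\cap\mathrm{int}(W)$ inside $U$. Since $w\in\overline{\mathrm{int}(W)}$ by hypothesis, the set $U\cap\mathrm{int}(W)$ is a nonempty open subset of $H$. Density of $\tau(G)$ in $H$ then furnishes some $g\in G$ with $\tau(g)\in U\cap\mathrm{int}(W)$, which is precisely a point of $\tau(G)\cap\mathrm{int}(W)$ lying in $U$. As $U$ was arbitrary, $w\in\overline{\tau(G)\cap\mathrm{int}(W)}$.

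I do not anticipate any obstacle: the result is essentially a two-line density argument, and the only hypothesis beyond the definition of a metric compactification that gets used is $W=\overline{\mathrm{int}(W)}$, which enters exactly to guarantee that arbitrary neighborhoods of points of $W$ meet the open set $\mathrm{int}(W)$. No use of compactness, irredundance, or genericity of $W$ is needed here.
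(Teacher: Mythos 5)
Your proof is correct and follows exactly the route the paper intends: the paper gives no explicit proof, merely noting that the lemma "easily" follows from the density of $\tau(G)$ in $H$, and your chain of inclusions together with the density argument for $W\subseteq\overline{\tau(G)\cap\mathrm{int}(W)}$ is precisely that standard argument. Nothing is missing.
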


If $G$ is abelian, the following proposition is a special case of a more general result (for locally compact abelian groups) that can be found in \cite{BaLeMo07, BjHaPo18}. Since we also deal with non-commutative groups, a proof is included for the convenience of the reader (see also \cite{DJL}). 
\begin{proposition}\label{prop: torus para bis 1}
    Let $W\subseteq H$ be a proper and irredundant window. 
    Then there exists a unique factor map $\beta: \overline{O_G(x_W)} \to H$ with $\beta(x_W) = 1_{H}$.
    This map satisfies 
    \begin{equation}\label{eq: chara beta}
    \beta(x) = \xi   \iff \tau(G)\cap \mathrm{int}(W)\xi^{-1}\subseteq \{\tau(g): x(g)=1\}\subseteq \tau(G)\cap W\xi^{-1},
    \end{equation}
    for $x \in \overline{O_G(x_W)}$.
    
If in addition, $W$ is generic, then $\beta^{-1}(\{1_{H}\})=\{x_W\}$, which implies that $x$ is almost automorphic with respect to $H$.
\end{proposition}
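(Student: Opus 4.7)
The natural candidate for $\beta$ is dictated by equivariance and the prescribed value at $x_W$: on the orbit, set $\beta(gx_W):=\tau(g)$. My plan is to show this is well-defined, extend it to $\overline{O_G(x_W)}$ via the characterization (\ref{eq: chara beta}), and then verify in turn that the extension is well-defined, continuous, $G$-equivariant, and unique among factor maps sending $x_W$ to $1_H$. Well-definedness on the orbit: if $gx_W=hx_W$, unpacking the shift action gives
\[\tau(G)\cap W\tau(g)^{-1}=\tau(G)\cap W\tau(h)^{-1}.\]
Since right multiplication by $\xi\in H$ is a homeomorphism preserving the property $W\xi^{-1}=\overline{\mathrm{int}(W\xi^{-1})}$, Lemma~\ref{interior} applies equally to every right-translate $W\xi^{-1}$; taking closures therefore yields $W\tau(g)^{-1}=W\tau(h)^{-1}$, and irredundancy of $W$ forces $\tau(g)=\tau(h)$, hence $g=h$ by injectivity of $\tau$.

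For the extension to $\overline{O_G(x_W)}$ I would argue directly from (\ref{eq: chara beta}). Existence of a $\xi$ satisfying (\ref{eq: chara beta}) for a given $x$ is obtained by choosing $g_n x_W\to x$ and passing to a subsequence with $\tau(g_n)\to\xi$ (using compactness and metrizability of $H$); the lower inclusion then uses openness of $\mathrm{int}(W)$ (so $\tau(g)\xi\in\mathrm{int}(W)$ implies $\tau(gg_n)\in\mathrm{int}(W)$ eventually, hence $x(g)=1$), and the upper inclusion uses closedness of $W$. Uniqueness of such $\xi$: if $\xi_1,\xi_2$ both satisfy (\ref{eq: chara beta}), then the right-translated form of Lemma~\ref{interior} gives
\[W\xi_1^{-1}\;=\;\overline{\{\tau(g):x(g)=1\}}\;=\;W\xi_2^{-1},\]
and irredundancy of $W$ forces $\xi_1=\xi_2$. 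This uniqueness simultaneously legitimises the definition $\beta(x):=\xi$, shows compatibility with the orbit formula (since $\xi=\tau(g)$ trivially satisfies (\ref{eq: chara beta}) when $x=gx_W$), and delivers continuity: for $x_n\to x$, any subsequential limit $\eta$ of $\beta(x_n)$ again satisfies (\ref{eq: chara beta}) for $x$ by the same openness/closedness argument, hence equals $\beta(x)$. Equivariance $\beta(gx)=\tau(g)\beta(x)$ follows from the identity $\{\tau(h):(gx)(h)=1\}=\{\tau(k):x(k)=1\}\,\tau(g)^{-1}$ substituted into (\ref{eq: chara beta}), and uniqueness of $\beta$ as a factor map with $\beta(x_W)=1_H$ follows because any such map agrees with $\beta$ on the dense orbit.

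Finally, if $W$ is generic and $\beta(x)=1_H$, then (\ref{eq: chara beta}) becomes
\[\{g\in G:\tau(g)\in\mathrm{int}(W)\}\;\subseteq\;\{g\in G:x(g)=1\}\;\subseteq\;\{g\in G:\tau(g)\in W\}.\]
Since $W\setminus\mathrm{int}(W)=\partial W$ and $\partial W\cap\tau(G)=\emptyset$ by genericity, the outer sets coincide, so $x=x_W$ and hence $x_W$ is almost automorphic on $H$. I expect the main technical subtlety to lie in the uniqueness of $\xi$ in (\ref{eq: chara beta}): this hinges on the observation that Lemma~\ref{interior} extends from $W$ to every right-translate $W\xi^{-1}$ with $\xi\in H$, using that $\mathrm{int}(W)\xi^{-1}=\mathrm{int}(W\xi^{-1})$ and that denseness of $\tau(G)$ in $H$ entails denseness in every nonempty open subset. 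Once this point is in place, the remaining steps (existence, continuity, equivariance, the almost-automorphic conclusion under genericity) are routine topological consequences of the open/closed structure of $\mathrm{int}(W)\subseteq W$.
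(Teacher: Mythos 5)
Your proposal is correct and follows essentially the same route as the paper: both hinge on sandwiching $\{\tau(g):x(g)=1\}$ between $\tau(G)\cap\mathrm{int}(W)\xi^{-1}$ and $\tau(G)\cap W\xi^{-1}$, applying Lemma~\ref{interior} to the right-translate $W\xi^{-1}$ to recover $W\xi^{-1}=\overline{\{\tau(g):x(g)=1\}}$, and invoking irredundancy to pin down $\xi$. The only difference is organizational: you define $\beta$ on all of $\overline{O_G(x_W)}$ directly via the characterization \eqref{eq: chara beta} and get continuity from a subsequential-limit argument, whereas the paper first proves uniform continuity of $\beta$ on the orbit (by an interleaving-of-sequences contradiction) and then extends; your version is, if anything, slightly more streamlined.
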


\begin{proof}
Observe that $gx_W=x_W$ if and only if $W=W\tau(g)^{-1}$. Then, since $W$ is irredundant,  the array $x_W$ is non-periodic and the map $\beta: O_G(x_W)\to \overleftarrow{G}$ given by $\beta(gx_W)=\tau(g)$ for every $g\in G$, is well defined.

 Now, we will show that $\beta$ admits a unique continuous extension to $\overline{O_G(x_W)}$. For that, let $x\in \overline{O_G(x_W)}$ and $(h_ix_W)_{i\in\mathbb{N}}$ be a sequence that converges to $x$. Then for every $g\in G$, there exists $i_g\in\mathbb{N}$ such that $x_W(gh_i)=x(g)$ for every $i\geq i_g$. Thus, if $x(g)=1$ then $\tau(g)\in W\tau(h_i)^{-1}$, for every $i\geq i_g$. Suppose that $\xi_1$ and $\xi_2$ are two accumulation points of $(\tau(h_i))_{i\in\mathbb{N}}$. Then $\{\tau(g): x(g)=1\}\subseteq \tau(G)\cap W\xi_s^{-1}$, for $s=1,2$. Now, suppose that $g\in G$ is such that  $\tau(g)\in \tau(G)\cap \mathrm{int}(W)\xi_s^{-1}$. Then there exists $w\in \mathrm{int}(W)$  such that $\tau(g)\xi_s=w$. This implies that for a subsequence $(h_{i_j})_{j\in\mathbb{N}}$ we have that for every sufficiently large $j$, the element $\tau(gh_{i_j})$ is in a neighborhood of $w$ contained in $W$. This implies that $x_W(gh_{i_j})=1$ for every sufficiently large $j$, from which we deduce that $x(g)=1$. We have shown
 \begin{equation}\label{eq-1}
 \tau(G)\cap \mathrm{int}(W)\xi_s^{-1}\subseteq \{\tau(g): x(g)=1\}\subseteq \tau(G)\cap W\xi_s^{-1}, \mbox{ for every } s=1,2.
 \end{equation}
 Since $\mathrm{int}(W)\xi_s^{-1}=\mathrm{int}(W\xi_s^{-1})$ (the map $w\to w\xi_s^{-1}$ is a homeomorphism), Lemma~\ref{interior} implies that
$$  
 W\xi_s^{-1}=\overline{\{\tau(g): x(g)=1\}}, \mbox{ for } s=1,2.
$$
 Since $W$ is irredundant, we deduce $\xi_1=\xi_2=:\xi$. 
 We have shown that if $h_ix_W\to x$ then $(\tau(h_i))_{i\in\mathbb{N}}$ converges to some $\xi\in H$. Observe that $\beta$ is uniformly continuous on $O_G(x_W)$. Indeed, if  $\beta$ is not uniformly continuous, then there exists $\varepsilon>0$ such that for every $n\in\mathbb{N}$ there exists $h_n, g_n\in G$ such that $d(h_nx_W,g_nx_W)\leq \frac{1}{n}$ and $d(\tau(h_n),\tau(g_n)) =d(\beta(h_nx_W), \beta(g_nx_W))\geq \varepsilon.$ Taking subsequences, this implies that $h_nx_W, g_nx_W\to x$, where $x$ is some element in $\overline{O_G(x_W)}$. From the previous discussion, it follows that $\tau(h_n)\to \xi_1$ and $\tau(g_n)\to \xi_2$. By hypothesis, we have $\xi_1\neq \xi_2$. Nevertheless, taking $f_{2n}=h_n$ and $f_{2n-1}=g_n$ we have $f_nx_W\to x$, which implies that  $\tau(f_n)\to \xi$, for some $\xi\in H$. Since $\xi_1$ and $\xi_2$ are accumulation points of $(\tau(f_n))_{n\in\mathbb{N}}$, we must have $\xi_1=\xi=\xi_2$, which is a contradiction.  Thus, $\beta$ is uniformly continuous in $O_G(x_W)$, which implies that it extends to a unique continuous function in $\overline{O_G(x_W)}$. It is not difficult to see that this extension is a factor map.   


Furthermore, observe that equation~\eqref{eq-1} implies that if $\beta(x)=\xi$, then
\begin{equation}\label{eq: squeeze for fiber element}
 \tau(G)\cap \mathrm{int}(W)\xi^{-1}\subseteq \{\tau(g): x(g)=1\}\subseteq \tau(G)\cap W\xi^{-1}.
\end{equation}
Conversely, we have seen that due to irredundancy of $W$, at most one $\xi$ can satisfy \eqref{eq: squeeze for fiber element} for fixed $x\in \ol{O_G(x_W)}$.
This shows that $\beta$ is indeed characterized by $\eqref{eq: chara beta}$.

Lastly, if $W$ is generic, then $\tau(G)\cap \mathrm{int}(W)=\tau(G)\cap W$, which implies that $\beta(x)=1_{H}$ if and only if $x=x_W$. Since $\{\beta(gx_W): g\in G\}=\tau(G)$ is dense in $H$, the map $\beta$ is almost one-to-one, and $(\overline{O_G(x_W)},G)$ is almost automorphic. 
\end{proof}

The next result corresponds to Theorem~\ref{thm: chara almost automorphic (introduction)}.
 
 \begin{theorem}\label{thm: almost-automorphic-model-set}
    Let $G$  be a countable discrete group and $(H,\tau)$ be a metrizable compactification of $G$. Then the map
    $$
    \Lambda: \mathcal{W}(H)\to \{x\in \{0,1\}^{G}: x \mbox{ is almost automorphic on } H\}
    $$
    given by
    $$
    \Lambda(W)=x_W, \mbox{ for every } W\in \mathcal{W}(H),
    $$
    is well defined and bijective.  
    
    Moreover, if $\beta: \overline{O_G(x)}\to H$ is the factor map such that $\beta^{-1}(\{1_H\})=\{x\}$, then the window $W$ associated to $x$ is given by
    $$
    W=\{\xi\in H: \mbox{ there exists } y\in \beta^{-1}(\{\xi\}) \mbox{ such that } y(1_G)=1\}.
    $$
    and
    \begin{align}\label{eq: chara boundary}
    \begin{split}
    \partial W&=W\setminus \{\xi\in H: y\in \beta^{-1}(\{\xi\}) \implies y(1_G)=1\}\\
    &= H \setminus \{\xi\in H: y(1_G) = y^\prime(1_G)\text{ for all } y,y^\prime \in \beta^{-1}(\{\xi\})\}.
    \end{split}
    \end{align}
\end{theorem}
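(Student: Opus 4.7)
The plan is to prove the three assertions---well-definedness, injectivity, and surjectivity together with the window and boundary formulas---in that order. Well-definedness follows directly from Proposition~\ref{prop: torus para bis 1}: any $W\in\mathcal{W}(H)$ is proper, irredundant and generic, so $\Lambda(W)=x_W$ is almost automorphic on $H$. For injectivity, if $\Lambda(W_1)=\Lambda(W_2)=x$, then $\tau(G)\cap W_i=\{\tau(g):x(g)=1\}$ for $i=1,2$; since each $W_i$ is proper, Lemma~\ref{interior} gives $W_i=\overline{\tau(G)\cap W_i}$, and so $W_1=W_2$.

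For surjectivity, given $x$ almost automorphic on $H$ with factor map $\beta$ satisfying $\beta^{-1}(\{1_H\})=\{x\}$, I would first observe that $(\overline{O_G(x)},G)$ is minimal: for $y\in\overline{O_G(x)}$ with $\beta(y)=\xi$, density of $\tau(G)$ yields $g_i\in G$ with $\tau(g_i)\xi\to 1_H$, and any accumulation point of $(g_iy)_i$ must lie in $\beta^{-1}(\{1_H\})=\{x\}$. Next, I define
\begin{equation*}
W:=\{\xi\in H:\exists\, y\in\beta^{-1}(\{\xi\})\text{ with }y(1_G)=1\},\quad
\widetilde W:=\{\xi\in H:\forall\, y\in\beta^{-1}(\{\xi\}),\ y(1_G)=1\}.
\end{equation*}
The core of the argument is showing $\widetilde W=\mathrm{int}(W)$. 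Openness of $\widetilde W$ follows from a compactness argument on fibers: if $\xi_i\to\xi\in\widetilde W$ with $\xi_i\notin\widetilde W$, picking $y_i\in\beta^{-1}(\{\xi_i\})$ with $y_i(1_G)=0$ and taking a convergent subsequence produces a limit $y\in\beta^{-1}(\{\xi\})$ with $y(1_G)=0$, contradicting $\xi\in\widetilde W$; hence $\widetilde W\subseteq\mathrm{int}(W)$. Conversely, if $\xi\in\mathrm{int}(W)$ and $y\in\beta^{-1}(\{\xi\})$, minimality gives $g_ix\to y$ with $\tau(g_i)\to\xi$; as $\mathrm{int}(W)$ is open, $\tau(g_i)\in\mathrm{int}(W)\subseteq W$ eventually, so $x(g_i)=1$ and $y(1_G)=1$ in the limit.

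The remaining window properties follow from this identification. By $G$-equivariance $\beta^{-1}(\{\tau(g)\})=g\cdot\beta^{-1}(\{1_H\})=\{gx\}$ is a singleton, so $\tau(g)\in W\iff x(g)=1$ (giving $\Lambda(W)=x$), and moreover $\tau(g)\in W\Rightarrow \tau(g)\in \widetilde W=\mathrm{int}(W)$, which establishes genericity $\tau(G)\cap\partial W=\emptyset$. Minimality combined with the equivalence $\tau(g)\in W\iff x(g)=1$ yields $W=\overline{\tau(G)\cap W}\subseteq\overline{\mathrm{int}(W)}\subseteq W$, so $W$ is proper. For irredundancy, suppose $W\eta=W$; then $\mathrm{int}(W)\eta=\mathrm{int}(W)$ and $(H\setminus W)\eta=H\setminus W$. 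Choose $g_i$ with $\tau(g_i)\to\eta$ and, after a subsequence, $g_ix\to x_\eta$ with $\beta(x_\eta)=\eta$. By genericity, $\tau(h)$ lies in $\mathrm{int}(W)$ or $H\setminus W$ for every $h\in G$, and both open sets are preserved by right multiplication by $\eta$; hence $\tau(hg_i)=\tau(h)\tau(g_i)$ lies in the same piece as $\tau(h)$ eventually, giving $x(hg_i)=x(h)$ for large $i$ and thus $x_\eta(h)=x(h)$ for all $h$. Therefore $x_\eta=x$ and $\eta=\beta(x_\eta)=\beta(x)=1_H$. Finally, since $W$ is closed, $\partial W=W\setminus\mathrm{int}(W)=W\setminus\widetilde W$, which is the first equality in \eqref{eq: chara boundary}, and the second follows from the partition $H\setminus\partial W=\mathrm{int}(W)\sqcup(H\setminus W)$ together with the observation that fiber values at $1_G$ are uniformly $1$ on $\mathrm{int}(W)$ and uniformly $0$ on $H\setminus W$, while on $\partial W$ both values must occur.

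The technical crux is the identification $\widetilde W=\mathrm{int}(W)$: it links the dynamical fiber structure of $\beta$ to the topology of $H$, and all other window properties and the boundary description flow from it. The irredundancy argument is the second delicate point, as it converts the algebraic symmetry $W\eta=W$ into a pointwise equality of symbolic arrays by exploiting genericity together with the values of the underlying semicocycle along $\tau(G)$.
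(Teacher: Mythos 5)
Your proposal is correct and follows essentially the same route as the paper: well-definedness via Proposition~\ref{prop: torus para bis 1}, injectivity from properness of the windows and density of $\tau(G)$, and surjectivity by defining $W$ through the fiber values at $1_G$, identifying $\mathrm{int}(W)$ with the set of fibers on which the value $1$ is forced (your $\widetilde{W}$ is the paper's $U_1$, and $H\setminus W$ its $U_0$), and then reading off properness, genericity and the boundary formula. Your irredundancy argument is a mild repackaging of the paper's (you exploit that $W\eta=W$ makes the two open sets $\mathrm{int}(W)$ and $H\setminus W$ invariant under right multiplication by $\eta$ and compare $x_\eta$ with $x$ coordinatewise, where the paper proves two set inclusions), but both hinge on the same use of genericity, so the substance is identical.
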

\begin{proof}
Proposition~\ref{prop: torus para bis 1} ensures that the map $\Lambda$ is well defined. Furthermore, injectivity of $\Lambda$ follows from the genericity of the windows $W\in\cW(H)$ and the denseness of $\tau(G)$.  

Let $x\in \{0,1\}^G$ be such that there exists an almost one-to-one factor map $\beta:\overline{O_G(x)}\to H$ verifying $\beta^{-1}(\{1_H\})=\{x\}$. for $\xi\in H$, define the set  
$$
\Phi(\xi)=\{y(1_G): y\in \beta^{-1}(\{\xi\})\}.
$$
Let $U_s=\{\xi\in H: \Phi(\xi)=\{s\} \}$, for $s=0,1$. We claim that $U_0$ and $U_1$ are non-empty open sets. Indeed, due to $x$ being non-periodic and $\#\beta^{-1}(\{\beta(gx)\})=1$ for every $g\in G$, 
 the image by $\beta$ of $O_G(x)$ intersects both sets $U_0$ and $U_1$. Moreover, if $\xi\in U_s$  is such that there exists a sequence $(\xi_i)_{i\in\mathbb{N}}$ in $H\setminus U_s$ that converges to $\xi$, then taking $y_i\in\beta^{-1}(\{\xi_i\})$  with $y_i(1_G)\neq s$, we get a subsequence $(y_{i_j})_{j\in\N}$ that converges to some $y^\prime$ that satisfies $y^\prime\in \beta^{-1}(\{\xi\})$ and $y^{\prime}(1_G)\neq s$, which is a contradiction. Hence $U_s$ is open. \\
Let $D=\{\xi\in H: \Phi(\xi)=\{0,1\}\}$ and
 $$
 W=H\setminus U_0=U_1\uplus D=\{\xi\in H: 1\in \Phi(\xi)\}. 
 $$
Since for every $g\in G$ we have $\Phi(\tau(g))=\{x(g)\}$, then
 $$  \{g\in G:\tau(g) \in W\} = \{g\in G:1\in \Phi(\tau(g))\} = \{g\in G:x(g)=1\},$$
which implies $x_W=x$.

 Now we will show that $W$ is a proper, irredundant and generic window. First, observe that $W$ is closed because it is the complement of $U_0$. Since $U_1$ is open, $\partial W\subseteq D$. On the other hand, if $\xi\in D$ and $y\in\beta^{-1}(\{\xi\})$ is such that $y(1_G)=0$, then every sequence $(g_ix)_{i\in\mathbb{N}}$ that converges to $y$ verifies  $\tau(g_i)\to \xi$   and $\Phi(\tau(g_i))=\{0\}$ for every $i$ big enough, which implies that $\xi\in \partial W$. Since $D=\partial W$, we get $U_1=\mathrm{int}(W)$ and $W=\overline{\mathrm{int}(W)}$, showing that $W$ is proper. Furthermore, since $\emptyset=\tau(G)\cap D =\tau(G)\cap\partial W$, it follows that $W$ is generic. In order to prove that $W$ is irredundant, suppose there exists $\xi\in H$ such that $W\xi^{-1}=W$. This  implies  that $1\in \Phi(k\xi)$ if and only if $1\in\Phi(k)$, for every $k\in H$. In particular, $1\in \Phi(g\xi)$ if and only if $\{1\}=\Phi(g1_{H})=\Phi(\tau(g))$, which implies $\{g\in G: y(g)=1\}\subseteq \{g\in G: x(g)=1\}$, for every $y\in  \beta^{-1}(\{\xi\})$.  Now, suppose that $(h_ix)_{i\in\mathbb{N}}$ converges to $y\in\beta^{-1}(\{\xi\})$, and let $g\in G$ be such that $x(g)=1$, that is, $\tau(g)\in W\xi^{-1}=W$. Since $W$ is generic, $\tau(g)\in \mathrm{int(W)}=\mathrm{int}(W)\xi^{-1}$ and then $\tau(g)\xi\in \mathrm{int}(W) $. Observe that $\beta(h_ix)\to \beta(y)=\xi$ implies $\tau(h_i)\to \xi$, and then $\tau(g)\tau(h_i)\to \tau(g)\xi$. From this we get $\tau(gh_i)\in \mathrm{int}(W)$, and then $x(gh_i)=1$ for every sufficiently large $i$. Since $x(gh_i)\to y(g)$,    we get $y(g)=1$. This shows that $\{g\in G: x(g) = 1\} = \{g\in G: \tau(g) \in W\} \tm \{g\in G: y(g)=1\}$, which leads to $y=x$ and $\xi=1_{H}$. Therefore, $W$ is irredundant.  
\end{proof}

Combining the previous two results, we obtain the following corollary.
\begin{corollary}\label{coro-characterization}
  Let $G$ be a discrete countable group and $(H,\tau)$ be a metric compactification of $G$.   The following statements are equivalent:
\begin{enumerate}
\item   $x\in\{0,1\}^G$ is almost automorphic on  $H$.   
\item $x\in\{0,1\}^G$ is the model set associated to a proper, irredundant, generic window $W\subseteq H$.
\end{enumerate}
  \end{corollary}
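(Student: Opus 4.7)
The plan is to derive Corollary~\ref{coro-characterization} directly from the two preceding results in Section~\ref{sec: almost auto as model set}, namely Proposition~\ref{prop: torus para bis 1} and Theorem~\ref{thm: almost-automorphic-model-set}. Since both directions of the equivalence are essentially encoded in those statements, the work reduces to identifying which half of which result provides each implication. I expect no genuine obstacle here; the corollary is a concise reformulation of what has already been established, and the main ``task'' is to cite the right pieces cleanly.

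For the implication (2) $\Rightarrow$ (1), I would argue as follows. Assume $x = x_W$ is the model set associated to a proper, irredundant, generic window $W\subseteq H$. Then the last assertion of Proposition~\ref{prop: torus para bis 1} applies verbatim: it produces a factor map $\beta:\overline{O_G(x_W)}\to H$ satisfying $\beta^{-1}(\{1_H\})=\{x_W\}$ and concludes that $(\overline{O_G(x_W)},G)$ is almost automorphic with $H$ as its (minimal equicontinuous) factor. By the definition of ``almost automorphic on $H$'' given in Section~\ref{sec_equicontinuous}, this is exactly (1).

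For the converse implication (1) $\Rightarrow$ (2), I would invoke the bijectivity of the map $\Lambda:\mathcal{W}(H)\to \{x\in\{0,1\}^G:\ x \text{ is almost automorphic on }H\}$ from Theorem~\ref{thm: almost-automorphic-model-set}. Since $\Lambda$ is surjective onto the almost automorphic arrays on $H$, any such $x$ is of the form $x_W$ for some $W\in\mathcal{W}(H)$; the explicit formula
\[
W = \{\xi\in H:\ \exists\, y\in\beta^{-1}(\{\xi\})\text{ with }y(1_G)=1\}
\]
given in that theorem even identifies the window. This is precisely statement (2).

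The only minor points to verify are bookkeeping ones: that the definitions of ``proper, irredundant, generic'' used in the hypothesis of the corollary coincide with those used in $\mathcal{W}(H)$ (which is true by construction), and that the factor map appearing in the definition of almost automorphicity is the one to which Theorem~\ref{thm: almost-automorphic-model-set} applies (which again holds since, in the almost automorphic setting, the map $\beta$ with $\beta^{-1}(\{1_H\})=\{x\}$ is the maximal equicontinuous factor map by uniqueness). Hence the proof is essentially a one-line citation in each direction, and no substantial new argument is required.
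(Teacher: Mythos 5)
Your proposal is correct and matches the paper exactly: the paper derives this corollary by "combining the previous two results," i.e.\ Proposition~\ref{prop: torus para bis 1} for (2)~$\Rightarrow$~(1) and the bijectivity of $\Lambda$ in Theorem~\ref{thm: almost-automorphic-model-set} for (1)~$\Rightarrow$~(2), just as you do. No further comment is needed.
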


\subsection{Toeplitz arrays as model sets}\label{sec: Toeplitz as model set}
In Section~\ref{sec: semicocycles} we have proven for all odometers (which do not necessarily carry a group structure) a one-to-one correspondence between almost automorphic points in $\Sigma^G$ and semicocycles.
In Section~\ref{sec: almost auto as model set} we have shown for all metric compactifications (which need not be odometers) a one-to-one correspondence between almost automorphic points in $\{0,1\}^G$ and model sets.
Thus, for odometers with a group structure we immediately obtain a characterization of Toeplitz $0$-$1$ arrays, which we summarize in the following statements:

 \begin{theorem}\label{thm: Toeplitz is model set}
    Let $G$ be a residually finite, discrete, countable group. Let $\overleftarrow{G}$ be a free odometer with group structure, and let $\mathrm{Toep(\overleftarrow{G})}\subseteq \{0,1\}^G$ be the collection of all Toeplitz arrays $x\in \{0,1\}^G$ such that $(\overleftarrow{G},G)$ is the maximal equicontinuous factor of $(\overline{O_G(x)},G)$. Then, the map
    $\Lambda: \mathcal{W}(\overleftarrow{G})\to \mathrm{Toep(\overleftarrow{G})}$
    given by
    $$
    \Lambda(W)=x_W, \mbox{ for every } W\in \mathcal{W}(\overleftarrow{G}),
    $$
    is well defined and bijective.  Moreover, the boundary $\partial W$ of $W\in \mathcal{W}(\overleftarrow{G})$ satisfies
    $$ \nu(\partial W) = 1 - d,$$
    where $\nu$ is the normalized Haar measure on $\overleftarrow{G}$ and $d= \lim_{n\to\infty} d_n$ is the regularity constant of $x_W$, as defined in Section~\ref{sec: regularity}.
\end{theorem}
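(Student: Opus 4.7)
The plan is to assemble the statement from two ingredients already established: the bijection between windows and almost automorphic arrays given by Theorem~\ref{thm: almost-automorphic-model-set}, and the intrinsic characterization of Toeplitz arrays as almost automorphic systems over their MEF given by Theorem~\ref{thm: chara. Toeplitz semicocycle}. Since $\overleftarrow{G}$ is a free odometer with group structure, the embedding $\tau\colon G\to\overleftarrow{G}$ makes $(\overleftarrow{G},\tau)$ a (totally disconnected) metric compactification of $G$. Hence Theorem~\ref{thm: almost-automorphic-model-set} applies verbatim with $H=\overleftarrow{G}$ and yields a bijection between $\mathcal{W}(\overleftarrow{G})$ and the set $\mathcal{A}$ of arrays $x\in\{0,1\}^G$ that are almost automorphic on $\overleftarrow{G}$.

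The remaining step is to identify $\mathcal{A}$ with $\mathrm{Toep}(\overleftarrow{G})$. In one direction, if $x\in\mathcal{A}$, then by definition the MEF of $(\overline{O_G(x)},G)$ is $(\overleftarrow{G},G)$ and the corresponding factor map is almost one-to-one. Since $\overleftarrow{G}$ is an infinite Cantor set, $x$ is in particular non-periodic, so Theorem~\ref{thm: chara. Toeplitz semicocycle} gives that $x$ is Toeplitz with MEF $\overleftarrow{G}$, that is, $x\in\mathrm{Toep}(\overleftarrow{G})$. Conversely, if $x\in\mathrm{Toep}(\overleftarrow{G})$, then $x$ is again non-periodic and Proposition~\ref{MEF_Toeplitz} directly provides an almost one-to-one factor map $\pi\colon\overline{O_G(x)}\to\overleftarrow{G}$ with $\pi^{-1}(\{1_{\overleftarrow{G}}\})=\{x\}$ (up to the conjugacy identifying the odometer built from a period structure of $x$ with the given $\overleftarrow{G}$), placing $x$ in $\mathcal{A}$.

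For the measure identity, the approach is to combine the boundary formula~\eqref{eq: chara boundary} from Theorem~\ref{thm: almost-automorphic-model-set},
$$ \partial W \;=\; \overleftarrow{G}\setminus \bigl\{\xi\in\overleftarrow{G}\ :\ y(1_G)=y'(1_G)\text{ for all }y,y'\in\beta^{-1}(\{\xi\})\bigr\}, $$
with the set $\Phi(\xi)=\{y(1_G): y\in\beta^{-1}(\{\xi\})\}\subseteq\{0,1\}$ introduced in Section~\ref{sec: semicocycles}. Since the alphabet has only two symbols, the right-hand side above is precisely $\overleftarrow{G}\setminus\{\xi: \#\Phi(\xi)=1\}$, so the second assertion of Theorem~\ref{thm: chara. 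Toeplitz semicocycle} applied to $x_W$ yields $\nu(\partial W)=1-\nu(\{\xi:\#\Phi(\xi)=1\})=1-d$.

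The argument is almost entirely bookkeeping, and the only mild subtlety I foresee is confirming that the non-periodicity hypothesis in Theorem~\ref{thm: chara. Toeplitz semicocycle} is automatic in both implications; this however follows at once from the infiniteness of the Cantor group $\overleftarrow{G}$, which rules out any orbit closure arising as the MEF of a finite orbit. The main conceptual content of the theorem is thus already packaged in the two ingredients used, and the remaining work consists in matching the two parametrizations ($W\leftrightarrow x_W$ and $x\leftrightarrow\Phi$) on the common ground provided by the almost one-to-one factor map $\beta$.
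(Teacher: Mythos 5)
Your proposal is correct and follows essentially the same route as the paper: the bijection is obtained by applying Theorem~\ref{thm: almost-automorphic-model-set} with $H=\overleftarrow{G}$ after identifying $\mathrm{Toep}(\overleftarrow{G})$ with the almost automorphic arrays on $\overleftarrow{G}$, and the measure identity follows by combining the boundary formula \eqref{eq: chara boundary} with Lemma~\ref{Characterization_Phi} and Theorem~\ref{thm: chara. Toeplitz semicocycle}. The paper's proof is just a more compressed version of the same bookkeeping.
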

\begin{proof}
The elements in $\mathrm{Toep}(\overleftarrow{G})$ are exactly the arrays $x\in \{0,1\}^G$ which are almost automorphic on $\overleftarrow{G}$. Thus, this theorem is a direct application of  Theorem~\ref{thm: almost-automorphic-model-set} applied to $G$ and $H=\overleftarrow{G}$.\\
For the last part, note that by Proposition~\ref{Characterization_Phi} and Theorem~\ref{thm: chara. Toeplitz semicocycle} we have
$$\nu(\{\xi\in\overleftarrow{G}: y(1_G)=y^\prime(1_G) \text{ for all }y,y^\prime\in\beta^{-1}(\{\xi\})\}) = d,$$
so that $\nu(\partial W) = 1-d$ follows from \eqref{eq: chara boundary} in Theorem~\ref{thm: almost-automorphic-model-set}.
\end{proof}


The following result is a consequence of Theorems  \ref{thm: chara. Toeplitz semicocycle} and \ref{thm: Toeplitz is model set}.
\begin{corollary}
  Let $G$ be a residually finite, discrete, countable group and $(\overleftarrow{G},G)$ be a free odometer such that $\overleftarrow{G}$ has group structure. The following statements are equivalent:
\begin{enumerate}
\item $x\in\{0,1\}^G$ is a Toeplitz array such that $(\overleftarrow{G},G)$ is the maximal e\-qui\-con\-ti\-nuous factor of $(\overline{O_G(x)},G)$.

\item The map $x:G\to \{0,1\}$ is a semicocycle on $\overleftarrow{G}$   invariant under no rotation.

\item $\{g \in G \mid x(g)=1\}$ is a model set associated to a proper, irredundant, generic window $W\subseteq \overleftarrow{G}$.
\end{enumerate}
\end{corollary}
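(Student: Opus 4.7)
The plan is to assemble the corollary directly from the two theorems cited in the preamble to the statement, since each supplies one of the two non-trivial implications. Concretely, Theorem~\ref{thm: chara. Toeplitz semicocycle} already provides the equivalence (1) $\Leftrightarrow$ (2) as its main content (the assumption there is exactly that $x \in \{0,1\}^G$ is non-periodic and $\overleftarrow{G}$ is a free odometer with group structure, which matches our hypotheses), and Theorem~\ref{thm: Toeplitz is model set} provides the equivalence (1) $\Leftrightarrow$ (3) through the bijection $\Lambda: \mathcal{W}(\overleftarrow{G}) \to \mathrm{Toep}(\overleftarrow{G})$.

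First I would verify that the non-periodicity hypothesis required by Theorem~\ref{thm: chara. Toeplitz semicocycle} is automatic under (1), (2), or (3). For (1): if $\overleftarrow{G}$ is the maximal equicontinuous factor, then $x$ cannot be periodic because a periodic $0$-$1$ array has a finite orbit closure and hence a finite equicontinuous factor, contradicting freeness of $\overleftarrow{G}$. For (3): irredundancy of $W$ rules out $W\tau(g)^{-1} = W$ for $g \neq 1_G$, so the array $x_W$ has trivial stabiliser. For (2): the invariance under no rotation together with density of $\tau(G)$ in $\overleftarrow{G}$ forces non-periodicity, since a period $g \neq 1_G$ would give $\Phi(h\tau(g)\xi) = \Phi(h\xi)$ for every $\xi, h$.

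With non-periodicity in hand, I would invoke Theorem~\ref{thm: chara. Toeplitz semicocycle} to obtain (1) $\Leftrightarrow$ (2) verbatim. For (1) $\Leftrightarrow$ (3), I would note that the set $\mathrm{Toep}(\overleftarrow{G})$ in the statement of Theorem~\ref{thm: Toeplitz is model set} is precisely the collection of arrays satisfying~(1), while the image $\Lambda(\mathcal{W}(\overleftarrow{G}))$ consists of exactly those $x \in \{0,1\}^G$ for which $\{g \in G \mid x(g)=1\} = \{g \in G \mid \tau(g) \in W\}$ for some $W \in \mathcal{W}(\overleftarrow{G})$, i.e.\ those satisfying~(3). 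Bijectivity of $\Lambda$ therefore yields (1) $\Leftrightarrow$ (3). Chaining the two equivalences concludes the proof.

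No substantial obstacle is expected; the only small point of care is checking that the non-periodicity assumption appearing in Theorem~\ref{thm: chara. Toeplitz semicocycle} is indeed redundant in the present setting (since Theorem~\ref{thm: Toeplitz is model set} does not mention it explicitly), and confirming that the terminology ``model set associated to a proper, irredundant, generic window'' in~(3) coincides with the array $x_W$ defined in~\eqref{eq: def of model set array}, which is immediate from the identification between subsets of $G$ and their indicator functions in $\{0,1\}^G$.
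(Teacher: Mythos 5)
Your proposal matches the paper's proof exactly: the paper derives this corollary in a single line as a direct consequence of Theorems~\ref{thm: chara. Toeplitz semicocycle} and \ref{thm: Toeplitz is model set}, which are precisely the two equivalences you chain together. Your additional check that non-periodicity is automatic is a welcome detail the paper omits; only note that under (1) the cleaner argument is that $gx=x$ with $g\neq 1_G$ would force $\tau(g)$ to fix $\pi(x)$ in $\overleftarrow{G}$, contradicting freeness of the odometer, rather than the claim that a nontrivial stabiliser yields a finite orbit closure (it need not).
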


\section{Self similar windows}\label{sec: self similarity}
In this section, we will introduce and refine the concept of self similarity from \cite[Section 4.1]{FuGlJäOe21}, which is a sufficient condition for finiteness of the fibers. Our main result, Proposition~\ref{prop: k-self similar with total order implies k+1 elements in fiber} below, will allow us to precisely control the cardinality of the fibres. This criterion, in combination with Theorem~\ref{thm: Toeplitz is model set}, provides the basis for the construction of irregular Toeplitz subshifts with prescribed fibre cardinality in the next section.\\
In what follows, we assume as in the previous section that $G$ is a countable discrete group and that $(H,\tau)$ is a metric compactification of $G$, equipped with a bi-invariant metric $d$ (for instance, if $H$ is an odometer,  the metric introduced in Section~\ref{odometers} is bi-invariant). 
Recall that since (left and right) multiplications are homeomorphisms in every topological group, we have in particular that $\mathrm{int}(\xi_1 W \xi_2) = \xi_1\mathrm{int}(W)\xi_2$ and $\partial(\xi_1W\xi_2) = \xi_1 (\partial W) \xi_2$ for all $\xi_1,\xi_2\in H$ and $W\tm H$. We will introduce the concepts of similarity in a manner analogous to the definitions given for the abelian case in \cite{FuGlJäOe21}.  \medskip

Let $W\subseteq H$ be a proper, irredundant window, and $\beta:\overline{O_G(x_W)}\to H$ be the factor map given in Proposition~\ref{prop: torus para bis 1}. For every $\xi\in H$ and $x\in \overline{O_G(x_W)}$, Proposition~\ref{prop: torus para bis 1} ensures that 
 $$ \beta(x) = \xi   \iff \tau(G)\cap \mathrm{int}(W)\xi^{-1}\subseteq \{\tau(g): x(g)=1\}\subseteq \tau(G)\cap W\xi^{-1}.$$
    Hence, for $x,x^\prime\in\beta^{-1}(\{\xi\})$ and each $g\in G$ such that $x(g)=1$ and $x^\prime(g)=0$, we have  that $\tau(g) \in \partial (W\xi^{-1})$.
In particular, in the case $\partial (W\xi^{-1})\cap \tau(G)=\emptyset$, we see that $\#\beta^{-1}(\{\xi\})=1$.
The following concepts will allow to address the case in which $\tau(G)$ has non-empty intersection with $\partial(W\xi^{-1})$.  
\begin{definition}[Self similarity]\label{similarity}
    We call $\xi\in H$ {\bf critical} if $\partial (W\xi^{-1})\cap \tau(G)\neq \emptyset$.
    Let $\xi$ be critical and $\tau(g_1),\tau(g_2)\in \partial (W\xi^{-1})\cap \tau(G)$.
    We write $\tau(g_1) \preccurlyeq_{\xi} \tau(g_2)$ if there exists $\varepsilon>0$ such that
    \begin{equation}\label{eq: self-similarity-order} \tau(g_1)^{-1}\cdot (B_\varepsilon(\tau(g_1))\cap W\xi^{-1})\tm \tau(g_2)^{-1}\cdot (B_\varepsilon(\tau(g_2))\cap W\xi^{-1}).\end{equation}
    If both $\tau(g_1) \preccurlyeq_{\xi} \tau(g_2)$ and $\tau(g_2) \preccurlyeq_{\xi} \tau(g_1)$ hold, we write $\tau(g_1) \sim_{\xi} \tau(g_2)$ and call $\tau(g_1)$ and $\tau(g_2)$ {\bf similar with respect to $\xi$}. When $\tau(g_1)\preccurlyeq_{\xi} \tau(g_2)$, but $\tau(g_1) \not\sim_{\xi} \tau(g_2)$, we write $\tau(g_1) \prec_{\xi} \tau(g_2)$.\\
    For each fixed $\xi$, the relation $\sim_{\xi}$ defines an equivalence relation on $\partial (W\xi^{-1})\cap \tau(G)$.
    We call its equivalence classes {\bf similarity classes with respect to $\xi$}.
    On the similarity classes with respect to $\xi$ the relation $\preccurlyeq_{\xi}$ defines a partial ordering .
    If there are only finitely many similarity classes with respect to $\xi$ for all $\xi\in H$, we call the window $W$ {\bf self similar}.
    If the number of similarity classes is bounded by $k$ for all $\xi$, then we say that $W$ is {\bf $k$-self similar}.
    Lastly, if $W$ is $1$-self similar, then we also call it {\bf perfectly self similar}.
\end{definition}

\begin{lemma}[{\cite[Lemma 4.2]{FuGlJäOe21}}]\label{lem: if l_1 in Gamma then l_2}
    Let $\xi\in H$ be critical and assume that $\tau(g_1), \tau(g_2) \in \partial (W\xi^{-1})\cap \tau(G)$ satisfy $\tau(g_1)\preccurlyeq_{\xi} \tau(g_2)$. 
    Then for each $x\in\beta^{-1}(\{\xi\})$ we have that $x(g_1)=1$ implies $x(g_2)=1$.
\end{lemma}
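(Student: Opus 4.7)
The core of the proof is to approximate $x$ by the orbit of $x_W$ and transfer the local containment inclusion~\eqref{eq: self-similarity-order} from the window $W\xi^{-1}$ to the actual values of $x$. I would proceed as follows.

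First, since $x\in\beta^{-1}(\{\xi\})$ lies in $\overline{O_G(x_W)}$, I would pick a sequence $(h_i)_{i\in\N}\subseteq G$ with $h_ix_W\to x$. By the construction of $\beta$ in Proposition~\ref{prop: torus para bis 1}, this forces $\tau(h_i)\to\xi$. The hypothesis $x(g_1)=1$ then implies $(h_ix_W)(g_1)=1$ for all sufficiently large $i$, which by definition of $x_W$ and the shift action translates to $\tau(g_1)\tau(h_i)\in W$, or equivalently $\tau(g_1)\tau(h_i)\xi^{-1}\in W\xi^{-1}$.

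Next, I would exploit the bi-invariance of the metric $d$ on $H$:
\begin{equation*}
d\bigl(\tau(g_1)\tau(h_i)\xi^{-1},\,\tau(g_1)\bigr) \ =\ d\bigl(\tau(h_i)\xi^{-1},1_H\bigr)\ =\ d(\tau(h_i),\xi)\ \xrightarrow[i\to\infty]{}\ 0.
\end{equation*}
Hence for all sufficiently large $i$ the point $a_i:=\tau(g_1)\tau(h_i)\xi^{-1}$ lies in $B_\varepsilon(\tau(g_1))\cap W\xi^{-1}$, where $\varepsilon>0$ is as in the definition of $\tau(g_1)\preccurlyeq_\xi\tau(g_2)$. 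Applying~\eqref{eq: self-similarity-order} to $a_i$ yields $\tau(g_2)\tau(g_1)^{-1}a_i\in W\xi^{-1}$. Since $\tau(g_2)\tau(g_1)^{-1}a_i=\tau(g_2)\tau(h_i)\xi^{-1}$, this is the same as $\tau(g_2)\tau(h_i)\in W$, i.e.\ $(h_ix_W)(g_2)=1$. Passing to the limit $i\to\infty$ gives $x(g_2)=1$, as desired.

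No serious obstacle arises; the only subtlety is the careful algebraic bookkeeping to translate the local inclusion in $H$ into a statement about the $G$-action, and this is handled cleanly once one observes that bi-invariance of $d$ makes $\tau(g_1)\tau(h_i)\xi^{-1}$ converge to $\tau(g_1)$ (and similarly after multiplying by $\tau(g_2)\tau(g_1)^{-1}$). Everything else is a direct unwinding of the definitions of $x_W$, the shift action, and the characterization~\eqref{eq: chara beta} of $\beta$ from Proposition~\ref{prop: torus para bis 1}.
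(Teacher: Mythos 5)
Your proof is correct and follows essentially the same route as the paper's: approximate $x$ by $h_ix_W$ with $\tau(h_i)\to\xi$, observe that $x(g_1)=1$ forces $\tau(g_1)\tau(h_i)\xi^{-1}\in B_\varepsilon(\tau(g_1))\cap W\xi^{-1}$ for large $i$, and push this through the inclusion~\eqref{eq: self-similarity-order} to conclude $\tau(g_2h_i)\in W$ and hence $x(g_2)=1$ in the limit. The only difference is that you make the role of the bi-invariant metric explicit, which the paper leaves implicit.
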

\begin{proof}
Let $x\in \beta^{-1}(\{\xi\})$ and $(h_ix_W)_{i\in\mathbb{N}}$ be a sequence that converges to $x$. Note that $x\in\beta^{-1}(\{\xi\})$ implies $\tau(h_i)\to \xi$. Choose $\epsilon>0$ as in Definition~\ref{similarity}, so that (\ref{eq: self-similarity-order}) holds. Then $x(g_1)=1$ implies that for all sufficiently large $i$ we have $$\tau(g_1h_i)\xi^{-1}=\tau(g_1)\tau(h_i)\xi^{-1}\in B_\epsilon(\tau(g_1))\cap W\xi^{-1} . $$ Due to \eqref{eq: self-similarity-order}, this entails $$\tau(g_2)\tau(h_i)\xi^{-1}\in B_\epsilon(\tau(g_2))\cap W\xi^{-1} $$
and thus $\tau(g_2h_i)\in W$. Hence, in the limit $i\to\infty$ we obtain $x(g_2)=1$ as required. 
\end{proof}

The previous lemma immediately yields and upper estimate for the size of fibers:
\begin{corollary}[{\cite[Lemma 4.3]{FuGlJäOe21}}]\label{cor: number of fibers if self similar}
    If $W$ is self similar, then $\# \beta^{-1}(\{\xi\}) <\infty$ and if $W$ is $k$-self similar, then $\#\beta^{-1}(\{\xi\})\leq 2^k$ for all $\xi\in H$.
\end{corollary}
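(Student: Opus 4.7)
The plan is to exploit the torus parametrization from Proposition~\ref{prop: torus para bis 1} to reduce the problem to counting assignments on the boundary, and then use Lemma~\ref{lem: if l_1 in Gamma then l_2} to collapse these choices onto similarity classes.

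First, I would fix $\xi\in H$ and observe that any $x\in\beta^{-1}(\{\xi\})$ is completely determined by $\xi$ outside of the set $B_\xi=\{g\in G:\tau(g)\in\partial(W\xi^{-1})\}$. Indeed, by the characterization
\[
\tau(G)\cap\mathrm{int}(W)\xi^{-1}\subseteq\{\tau(g):x(g)=1\}\subseteq\tau(G)\cap W\xi^{-1},
\]
together with $\mathrm{int}(W)\xi^{-1}=\mathrm{int}(W\xi^{-1})$ and the injectivity of $\tau$, we have $x(g)=1$ whenever $\tau(g)\in\mathrm{int}(W\xi^{-1})$ and $x(g)=0$ whenever $\tau(g)\notin W\xi^{-1}$. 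Thus the only freedom in choosing $x\in\beta^{-1}(\{\xi\})$ lies in prescribing values on $B_\xi$. In particular, if $\xi$ is not critical, then $B_\xi=\emptyset$ and $\#\beta^{-1}(\{\xi\})\le 1$, which is consistent with both assertions.

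Next, I would apply Lemma~\ref{lem: if l_1 in Gamma then l_2} to show that the restriction of $x$ to $B_\xi$ is constant on every similarity class with respect to $\xi$. Indeed, for $\tau(g_1)\sim_\xi\tau(g_2)$ we have both $\tau(g_1)\preccurlyeq_\xi\tau(g_2)$ and $\tau(g_2)\preccurlyeq_\xi\tau(g_1)$, so the lemma gives $x(g_1)=1\Leftrightarrow x(g_2)=1$ and hence $x(g_1)=x(g_2)$. Consequently, $x$ on $B_\xi$ is determined by the $\{0,1\}$-valued function it induces on the (at most countable) set of similarity classes.

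Finally, I would count: if $W$ is $k$-self similar, then there are at most $k$ similarity classes with respect to every $\xi$, so at most $2^k$ possible $\{0,1\}$-patterns, and therefore $\#\beta^{-1}(\{\xi\})\le 2^k$. Likewise, if $W$ is self similar, the number of classes is finite for each $\xi$, whence $\#\beta^{-1}(\{\xi\})<\infty$. I do not anticipate a genuine obstacle here: the argument is essentially a bookkeeping consequence of Proposition~\ref{prop: torus para bis 1} and Lemma~\ref{lem: if l_1 in Gamma then l_2}. The only minor subtlety worth flagging is that we obtain an upper bound rather than an exact count, since not every $\{0,1\}$-assignment on the similarity classes need be realized by an actual element of $\beta^{-1}(\{\xi\})$; achieving exact cardinalities will require the stronger self-similarity conditions introduced in Proposition~\ref{prop: k-self similar with total order implies k+1 elements in fiber}.
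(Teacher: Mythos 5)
Your proposal is correct and follows essentially the same route as the paper's proof: the paper likewise uses Proposition~\ref{prop: torus para bis 1} to note that $x$ is determined off the boundary $\partial(W\xi^{-1})$, invokes Lemma~\ref{lem: if l_1 in Gamma then l_2} (in both directions of $\preccurlyeq_\xi$) to conclude that $x$ is constant on each similarity class, and then counts the at most $2^k$ resulting $\{0,1\}$-patterns. Your closing remark that this only yields an upper bound, with exact counts deferred to Proposition~\ref{prop: k-self similar with total order implies k+1 elements in fiber}, matches the paper's intent as well.
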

\begin{proof}
Suppose there are exactly $k$ similarity classes $S_1,\ldots, S_k$ with respect to $\xi$.
Then, by above lemma, for every $x\in \beta^{-1}(\{\xi\})$ there exists $(s_1,\ldots,s_k)\in\{0,1\}^k$ such that $\tau(g)\in S_i \implies x(g)=s_i$.
Since by Proposition~\ref{prop: torus para bis 1}, $x$ is determined on $\{g\in G: \tau(g) \notin \partial(W\xi^{-1})\}$, we see that $(s_1,\ldots,s_k)$ completely determines $x$.
Hence, $\#\beta^{-1}(\{\xi\})\leq \#\{0,1\}^k\leq 2^k$.
\end{proof}

We therefore see that using the cardinality of the set $\mathcal{S}$ of similarity classes one can obtain a straightforward upper bound on the size of the fiber.
On the other hand, it will turn out that using information about the partial order $\preccurlyeq_{\xi}$ on $\mathcal{S}$, we are able to both tighten the upper bound and also establish lower bounds.
In order to prove this, we need a criterion, which for a certain class of ``candidate elements'' $x\in \{0,1\}^G$ characterizes whether these lie in $\beta^{-1}(\{\xi\})$ or not.
 


The following characterization of the fibers, which is contained implicitely in \cite[Section 3.2]{JLO19} and explicitely in a more general setting in \cite{DJL}, enables us to do so:
 \begin{proposition}[\cite{DJL, JLO19}]\label{prop: criterion being in fiber}
    Let $W\subseteq \overleftarrow{G}$ be a proper, irredundant window. 
    Let $\xi\in \overleftarrow{G}$, and $x\in \{0,1\}^G$ be such that 
    \begin{align*}
        \tau(G)\cap \mathrm{int}(W)\xi^{-1}\subseteq \{\tau(g):x(g)=1\}\subseteq \tau(G)\cap W\xi^{-1}.
    \end{align*}
    Then, $x\in \overline{O_G(x_W)}$ if and only if for all finite sets $N,M\subseteq G$ such that $N\subseteq \{g\in G:x(g)=1\}$ and $M\subseteq \{g\in G:x(g)=0\}$ we have
    \begin{align}\label{eq: xi-closed}
        \xi\in \overline{\kl \bigcap_{l\in N} {\tau(l)}^{-1}W \setminus \bigcup_{j\in M}  \tau(j)^{-1}W  \kr \cap  \tau(G)}.
    \end{align}
\end{proposition}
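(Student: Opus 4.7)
My plan is to prove both implications directly from the definition of $\overline{O_G(x_W)}$, exploiting the pointwise nature of convergence in $\{0,1\}^G$. Since the product topology on $\{0,1\}^G$ is metrizable, it suffices to work with sequences throughout.

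For the ``only if'' direction, I would assume $x\in\overline{O_G(x_W)}$ and choose a sequence $(h_i)_{i\in\mathbb{N}}$ in $G$ with $h_ix_W\to x$. Invoking the continuous extension $\beta$ of the factor map from Proposition~\ref{prop: torus para bis 1}, one has $\tau(h_i)=\beta(h_ix_W)\to \beta(x)$. The squeeze assumption on $x$, combined with the characterization of $\beta$ in Proposition~\ref{prop: torus para bis 1} and the irredundancy of $W$, forces $\beta(x)=\xi$, so $\tau(h_i)\to\xi$. Now, for any finite subsets $N\subseteq\{g:x(g)=1\}$ and $M\subseteq\{g:x(g)=0\}$, pointwise convergence on the finite set $N\cup M$ ensures that $x_W(gh_i)=x(g)$ for every $g\in N\cup M$ once $i$ is large enough. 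Unwinding the definitions $x_W(k)=1\iff\tau(k)\in W$ and $\tau(gh_i)=\tau(g)\tau(h_i)$, this translates into $\tau(h_i)\in\bigcap_{l\in N}\tau(l)^{-1}W \setminus \bigcup_{j\in M}\tau(j)^{-1}W$ for all sufficiently large $i$; since $\tau(h_i)\in\tau(G)$ and $\tau(h_i)\to\xi$, $\xi$ lies in the desired closure.

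For the ``if'' direction, the goal is to construct a sequence $(h_k)_{k\in\mathbb{N}}$ in $G$ witnessing $h_kx_W\to x$. I would enumerate $G=\{g_n\}_{n\in\mathbb{N}}$ and, at the $k$-th step, set $N_k=\{g_n:n\leq k,\,x(g_n)=1\}$ and $M_k=\{g_n:n\leq k,\,x(g_n)=0\}$. Fixing a compatible metric on $H$, the hypothesis applied to $(N_k,M_k)$ yields $h_k\in G$ with $d(\tau(h_k),\xi)<1/k$ and $\tau(h_k)\in\bigcap_{l\in N_k}\tau(l)^{-1}W \setminus \bigcup_{j\in M_k}\tau(j)^{-1}W$. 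The same translation as above then gives $(h_kx_W)(g_n)=x(g_n)$ for all $n\leq k$; since $(g_n)_{n\in\mathbb{N}}$ exhausts $G$, we have $h_kx_W\to x$ pointwise, so $x\in\overline{O_G(x_W)}$.

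The only technical content is the bookkeeping translation between membership of $\tau(h)$ in the set-theoretic expression $\bigcap_{l\in N}\tau(l)^{-1}W\setminus \bigcup_{j\in M}\tau(j)^{-1}W$ and the pointwise equalities $(hx_W)(g)=x(g)$ on $N\cup M$; this is immediate from the definitions of $x_W$ and of the shift action, so no significant obstacle is expected. The one point requiring slight care, in the forward direction, is the invocation of the squeeze hypothesis through Proposition~\ref{prop: torus para bis 1} in order to identify $\xi$ with $\beta(x)$ and thereby justify that $\tau(h_i)\to\xi$.
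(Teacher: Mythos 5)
Your proof is correct and follows essentially the same route as the paper's: the forward direction identifies $\beta(x)=\xi$ via the squeeze characterization in Proposition~\ref{prop: torus para bis 1} and then translates pointwise convergence on $N\cup M$ into membership of $\tau(h_i)$ in $\bigcap_{l\in N}\tau(l)^{-1}W\setminus\bigcup_{j\in M}\tau(j)^{-1}W$, exactly as in the paper. For the converse, your diagonal construction along an exhaustion of $G$ is only a cosmetic repackaging of the paper's argument, which first produces approximants $x_{N,M}\in\overline{O_G(x_W)}$ agreeing with $x$ on $N\cup M$ and then passes to the closure.
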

The following proof is structurally the same as in \cite{DJL}. We include it for the convenience of the reader. 
\begin{proof}
    First, assume that \eqref{eq: xi-closed} is fulfilled. We show that that for every pair of finite sets $N\subseteq \{g\in G:x(g)=1\}$ and $M\tm \{g\in G:x(g)=0\}$ there exists $x_{N,M}\in\overline{O_G(x_W)}$ such that $N\subseteq \{g\in G:x_{N,M}(g)=1\}\subseteq G\setminus M$. This implies that $x\in \ol{O_G(x_W)}$ as well.\\
    Let $N,M$ be as above. Due to \eqref{eq: xi-closed}, there exists a sequence $(g_n)_{n\in\mathbb{N}}\subseteq G$ such that $\tau(g_n)\to \xi$ and 
    \begin{align}\label{eq: tau(g_n) in cap without cup}
        \tau(g_n)\in \bigcap_{l\in N} {\tau(l)}^{-1}W \setminus \bigcup_{j\in M}  \tau(j)^{-1}W. 
    \end{align}
    By going over to a subsequence if necessary, we can assume that the sequence $(g_n x_W)_{n\in\N}$ converges to some element $x_{N,M}$. Let $l\in N$ and $n\in\mathbb{N}$. 
    Then, $\tau(l)\tau(g_n)\in W$. Therefore, $1=x_W(lg_n)=(g_nx_W)(l)$, which implies in the limit $n\to \infty$ that $x_{N,M}(l)=1$ and we conclude that $l\in \{g\in G:x_{N,M}(g)=1\}$.
    
    Now, if $j\in M$, then (\ref{eq: tau(g_n) in cap without cup}) yields $\tau(j)\tau(g_n)\notin W$.
    Consequently, $x_W(jg_n)=0$.
    In the limit, we thus obtain $x_{N,M}(j)=0$. 
    
    Altogether, this yields $N\subseteq \{g\in G:x_{N,M}(g)=1\}\subseteq G\setminus M$, as required.\smallskip

    Now, assume that $x\in \overline{O_G(x_W)}$,  and $N,M\subseteq G$ are two finite sets such that $N\subseteq \{g\in G: x(g)=1\}$ and $M\subseteq \{g\in G: x(g)=0\}$.
    As $x\in\beta^{-1}(\{\xi\})$ by Proposition~\ref{prop: torus para bis 1}, there exists a sequence $(g_n)_{n\in\mathbb{N}}\subseteq G$ such that $g_nx_W\to x$ and $\tau(g_n)\to \xi$.
    Consequently, there is some $n_0\in\mathbb{N}$ such that $\tau(N)\subseteq W\tau(g_n)^{-1}\cap \tau(G)$ and $\tau(M)\cap W\tau(g_n)^{-1}=\emptyset$ for each $n\geq n_0$.
    Thus, for all $l\in N$, it holds that $\tau(g_n)\in \tau(l)^{-1}W$, and for each $j\in M$, it is true that $\tau(g_n)\notin \tau(j)^{-1}W$.
    Hence, for every $n\geq n_0$ we have 
    \begin{align*}
        \tau(g_n)\in \left(\bigcap_{l\in N}\tau(l)^{-1}W\setminus \bigcup_{j\in M}\tau(j)^{-1}W\right)\cap \tau(G).
    \end{align*}
    As $\tau(g_n)\to\xi$, this implies \eqref{eq: xi-closed}.
\end{proof}

In the following, for a window $W$ and finite subsets $N,M\tm G$ we define
$$ T_W(N,M) :=\bigcap_{l\in N} {\tau(l)}^{-1}W \setminus \bigcup_{j\in M}  \tau(j)^{-1}W $$
We will use the following sufficient criterion in later constructions:
\begin{lemma}\label{lem: suff crit being in fiber}
    Let $W$, $\xi$ and $x$ be as in Proposition~\ref{prop: criterion being in fiber}.
    If for all finite sets $N_0,M_0\tm G$ such that $N_0\tm\{g\in G: x(g) = 1\}$, $M_0\tm \{g\in G:x(g)=0\}$ as well as $\tau(N_0)\cup\tau(M_0)\tm \partial(W\xi^{-1})$ and all $\varepsilon>0$ the set $B_{\varepsilon}(\xi)\cap T_W(N_0,M_0)$ has non-empty interior, then $x\in \ol{O_G(x_W)}$ (and hence $x\in \beta^{-1}(\{\xi\})$).
\end{lemma}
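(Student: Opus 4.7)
The plan is to reduce the statement to verifying the criterion of Proposition~\ref{prop: criterion being in fiber}, namely $\xi \in \ol{T_W(N,M) \cap \tau(G)}$ for every finite $N \subseteq \{g : x(g)=1\}$ and $M \subseteq \{g : x(g)=0\}$, and to handle a general such pair $(N,M)$ by splitting it along the partition induced by $\partial(W\xi^{-1})$. Specifically, set $N_0 = \{l \in N : \tau(l) \in \partial(W\xi^{-1})\}$, $N_1 = N \setminus N_0$, $M_0 = \{j \in M : \tau(j) \in \partial(W\xi^{-1})\}$, $M_1 = M \setminus M_0$. The hypothesis immediately controls the pair $(N_0,M_0)$, so the task is to show that the ``non-critical'' part $(N_1,M_1)$ does not affect the presence of $\xi$ in the closure.

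For this, first I would use the squeeze inclusion together with properness of $W$ to locate the non-critical points sharply. For $l \in N_1$ one has $\tau(l) \in W\xi^{-1} \setminus \partial(W\xi^{-1})$, and since $W\xi^{-1}$ is closed this forces $\tau(l) \in \mathrm{int}(W\xi^{-1}) = \mathrm{int}(W)\xi^{-1}$, equivalently $\tau(l)\xi \in \mathrm{int}(W)$. For $j \in M_1$ the contrapositive of the lower squeeze gives $\tau(j) \notin \mathrm{int}(W)\xi^{-1}$, and $\tau(j) \notin \partial(W\xi^{-1})$, so $\tau(j)\xi \in H \setminus W$. Since left translation by $\tau(l)$ respectively $\tau(j)$ is a homeomorphism (indeed an isometry for the chosen bi-invariant metric $d$) and both $\mathrm{int}(W)$ and $H \setminus W$ are open, each of these finitely many conditions persists on a neighbourhood of $\xi$. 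Taking the minimum radius, I obtain $\varepsilon' \in (0,\varepsilon]$ with
\[
B_{\varepsilon'}(\xi) \;\subseteq\; \bigcap_{l \in N_1} \tau(l)^{-1}W \;\setminus\; \bigcup_{j \in M_1} \tau(j)^{-1}W \;=\; T_W(N_1,M_1).
\]

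Given this, the key set-theoretic observation is that $T_W(N,M) = T_W(N_0,M_0) \cap T_W(N_1,M_1)$ (straightforward distribution of intersections over set differences), so
\[
B_{\varepsilon'}(\xi) \cap T_W(N_0,M_0) \;\subseteq\; T_W(N,M).
\]
By hypothesis the left-hand side has non-empty interior, and since $\tau(G)$ is dense in $H$, this interior meets $\tau(G)$. Consequently $B_\varepsilon(\xi) \cap T_W(N,M) \cap \tau(G) \neq \emptyset$. As $\varepsilon > 0$ was arbitrary, $\xi \in \ol{T_W(N,M) \cap \tau(G)}$, and Proposition~\ref{prop: criterion being in fiber} yields $x \in \ol{O_G(x_W)}$. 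The additional conclusion $x \in \beta^{-1}(\{\xi\})$ is then automatic from the characterization of $\beta$ in Proposition~\ref{prop: torus para bis 1}, since the squeeze inclusion on $x$ was part of the assumption.

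The only subtle point, and the one I would write out carefully, is the step that upgrades ``$\tau(l) \in W\xi^{-1}$ and $\tau(l) \notin \partial(W\xi^{-1})$'' to ``$\tau(l) \in \mathrm{int}(W\xi^{-1})$'' (and the analogous strengthening for $M_1$); this is where properness of $W$ (via $W\xi^{-1}$ being closed and the identity $\mathrm{int}(W)\xi^{-1} = \mathrm{int}(W\xi^{-1})$) is used essentially. Everything else — the neighbourhood argument, the set algebra, and the density invocation — is routine once the correct splitting is in place.
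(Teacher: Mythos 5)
Your proposal is correct and follows essentially the same route as the paper's proof: the same splitting $N=N_0\uplus N_1$, $M=M_0\uplus M_1$ along $\partial(W\xi^{-1})$, the identity $T_W(N,M)=T_W(N_0,M_0)\cap T_W(N_1,M_1)$, the observation that $T_W(N_1,M_1)$ contains a ball around $\xi$, and then density of $\tau(G)$ plus Proposition~\ref{prop: criterion being in fiber}. The only difference is that you spell out the step from ``in $W\xi^{-1}$ but not in $\partial(W\xi^{-1})$'' to ``in $\mathrm{int}(W)\xi^{-1}$'' (and its analogue for $M_1$), which the paper leaves as ``straightforward to see''.
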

\begin{proof}
    Let $N\tm \{g\in G : x(g)=1\}$ and $M\tm \{g\in G: x(g)=0\}$ be finite sets.
    We write $N = N_0\uplus N_1$ and $M = M_0\uplus M_1$, where $N_0 = \{g\in N: \tau(g) \in \partial(W\xi^{-1})\}$ and similarly for $M_0$.
    Observe that $T_W(N,M) = T_W(N_0,M_0)\cap T_W(N_1,M_1)$.
    Note that the assumption
    $$\tau(G)\cap \mathrm{int}(W)\xi^{-1}\subseteq \{\tau(g):x(g)=1\}\subseteq \tau(G)\cap W\xi^{-1}$$
    yields $\tau(N_1)\tm \mathrm{int}(W)\xi^{-1}$ and $\tau(M_1)\cap W\xi^{-1} = \emptyset$.
    Now, since $N_1,M_1$ are still finite and $W$ is closed, it is straightforward to see that $T_W(N_1,M_1)$ is a neighborhood of $\xi$.
    In particular, for all $\varepsilon>0$ small enough, we have $B_\varepsilon(\xi)\tm T_W(N_1,M_1)$, so that 
    $$ B_\varepsilon(\xi)\cap T_W(N,M) = B_{\varepsilon}(\xi)\cap T_W(N_0,M_0)$$
    has non-empty interior by assumption.
    As $\tau(G)$ is dense in $H$, we see that $\tau(G) \cap B_\varepsilon(\xi)\cap T_W(N,M) \neq \emptyset$.
    Since $\varepsilon$ can be arbitrarily small, $x\in \ol{O_G(x_W)}$ follows by Proposition~\ref{prop: criterion being in fiber}
\end{proof}

Observe that one can rewrite Definition~\ref{similarity} as
$$ \tau(g_1)\preccurlyeq_\xi \tau(g_2)\iff \exists\varepsilon>0: B_{\varepsilon}(\xi)\cap \tau(g_1)^{-1}W \tm B_{\varepsilon}(\xi) \cap \tau(g_2)^{-1}W.$$
Hence, we immediately obtain the following:

\begin{lemma}\label{lem: need only check at extrema}
    Let $N,M\tm G$ be finite sets such that $\tau(N)\cup\tau(M)\tm \partial(W\xi^{-1})$.
    Suppose, there exists $l_0\in N$ and $j_0\in M$ such that $\tau(l_0)\preccurlyeq_\xi \tau(l)$ for all $l\in N$ and $\tau(j_0)\succcurlyeq_\xi \tau(j)$ for all $j\in M$.\\
    Then, $B_{\varepsilon}(\xi)\cap T_W(N,M) = B_{\varepsilon}(\xi)\cap T_W(\{l_0\},\{j_0\})$ for all $\varepsilon>0$ small enough.
\end{lemma}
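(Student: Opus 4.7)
The plan is to derive the result directly from the equivalent reformulation of the relation $\preccurlyeq_\xi$ displayed just before the lemma statement, namely
\[
\tau(g_1)\preccurlyeq_\xi \tau(g_2)\iff \exists\,\varepsilon>0:\ B_{\varepsilon}(\xi)\cap \tau(g_1)^{-1}W \subseteq B_{\varepsilon}(\xi) \cap \tau(g_2)^{-1}W,
\]
which in turn relies on the bi-invariance of the metric $d$ on $H$ (so that $\tau(g)^{-1}B_{\varepsilon}(\tau(g)) = B_{\varepsilon}(1_H)$ and $B_{\varepsilon}(1_H)\xi = B_{\varepsilon}(\xi)$). I would first observe that if an inclusion of the above form holds for some $\varepsilon>0$, then it automatically holds for every $0<\varepsilon'\leq\varepsilon$, since $B_{\varepsilon'}(\xi)\subseteq B_{\varepsilon}(\xi)$. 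This will allow passing to a common radius.

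Next, for each $l\in N$ I pick $\varepsilon_l>0$ witnessing $\tau(l_0)\preccurlyeq_\xi \tau(l)$, and for each $j\in M$ I pick $\varepsilon_j>0$ witnessing $\tau(j)\preccurlyeq_\xi \tau(j_0)$. Since $N$ and $M$ are finite, I set $\varepsilon_0:=\min\bigl(\{\varepsilon_l:l\in N\}\cup\{\varepsilon_j:j\in M\}\bigr)>0$. Then for every $\varepsilon\leq\varepsilon_0$ the inclusions
\[
B_{\varepsilon}(\xi)\cap \tau(l_0)^{-1}W\subseteq B_{\varepsilon}(\xi)\cap \tau(l)^{-1}W\quad(l\in N)
\]
and
\[
B_{\varepsilon}(\xi)\cap \tau(j)^{-1}W\subseteq B_{\varepsilon}(\xi)\cap \tau(j_0)^{-1}W\quad(j\in M)
\]
all hold simultaneously.

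Combining these gives, on the one hand, the equality
\[
B_{\varepsilon}(\xi)\cap \bigcap_{l\in N}\tau(l)^{-1}W \;=\; B_{\varepsilon}(\xi)\cap \tau(l_0)^{-1}W,
\]
the inclusion ``$\subseteq$'' coming from $l_0\in N$, and ``$\supseteq$'' from the minimality inclusions above. Similarly, the maximality of $\tau(j_0)$ in $M$ yields
\[
B_{\varepsilon}(\xi)\cap \bigcup_{j\in M}\tau(j)^{-1}W \;=\; B_{\varepsilon}(\xi)\cap \tau(j_0)^{-1}W.
\]
Substituting both into the definition of $T_W(N,M)$ then gives
\[
B_{\varepsilon}(\xi)\cap T_W(N,M) \;=\; B_{\varepsilon}(\xi)\cap\bigl(\tau(l_0)^{-1}W\setminus\tau(j_0)^{-1}W\bigr) \;=\; B_{\varepsilon}(\xi)\cap T_W(\{l_0\},\{j_0\}),
\]
which is the desired identity.

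The only mildly delicate point is the passage from the original definition of $\preccurlyeq_\xi$, formulated around the points $\tau(g_i)$ in their own $\varepsilon$-neighborhoods, to the reformulation centered at $\xi$; once this is in hand (via bi-invariance), the rest is purely set-theoretic. There is no real obstacle, and the finiteness of $N$ and $M$ is used solely to extract the uniform radius $\varepsilon_0$.
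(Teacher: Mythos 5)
Your proof is correct and follows exactly the route the paper intends: the paper states this lemma without proof as an immediate consequence of the reformulation of $\preccurlyeq_\xi$ centered at $\xi$, and your argument simply supplies the omitted details (uniformizing the radius over the finite sets $N$ and $M$, then identifying the intersection with the $l_0$-term and the union with the $j_0$-term).
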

This allows us to take a first step towards a lower bound for the fiber.
\begin{lemma}\label{lem: partition yields fiber element}
    Let $W\tm H$ be a proper, irredundant window and let $\xi\in H$ be critical.
    Suppose, we can decompose the set $\mathcal{S}$ of similarity classes w.r.t.\ $\xi$ into $\mathcal{S}= \mathcal{S}^-\uplus \mathcal{S}^+$ such that $\mathcal{S}^-$ has a maximum $S_1$ (w.r.t.\ $\preccurlyeq_\xi$) and $\mathcal{S}^+$ has a minimum $S_2$.
    Assume further $S_1 \prec_\xi S_2$.
    If we define $x\in\{0,1\}^G$ via
    $$ x(g) = 1 \iff \tau(g) \in \mathrm{int}(W)\xi^{-1} \cup \bigcup_{S\in \mathcal{S}^+} S,$$
    then $x\in \beta^{-1}(\{\xi\})$.
    The result also holds true if either $\mathcal{S}^-$ or $\mathcal{S}^+$ is empty.
\end{lemma}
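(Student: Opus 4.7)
The plan is to verify the hypotheses of Proposition~\ref{prop: torus para bis 1} via Lemma~\ref{lem: suff crit being in fiber}. First I verify the squeeze condition $\tau(G)\cap\mathrm{int}(W)\xi^{-1}\subseteq\{g:x(g)=1\}\subseteq\tau(G)\cap W\xi^{-1}$: the left inclusion is immediate from the definition of $x$, and the right one follows since every similarity class $S$ lies in $\partial(W\xi^{-1})\subseteq W\xi^{-1}$. It therefore remains to show $x\in\overline{O_G(x_W)}$, which will then give $x\in\beta^{-1}(\{\xi\})$.

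To apply Lemma~\ref{lem: suff crit being in fiber}, I fix finite sets $N_0\subseteq\{g:x(g)=1\}$, $M_0\subseteq\{g:x(g)=0\}$ with $\tau(N_0)\cup\tau(M_0)\subseteq\partial(W\xi^{-1})$ and $\varepsilon>0$. On $\partial(W\xi^{-1})\cap\tau(G)$ the value of $x$ is dictated purely by membership in $\bigcup_{S\in\mathcal{S}^+}S$, so $\tau(N_0)\subseteq\bigcup_{S\in\mathcal{S}^+}S$ and $\tau(M_0)\subseteq\bigcup_{S\in\mathcal{S}^-}S$. Since enlarging $N_0$ and $M_0$ only shrinks $T_W(N_0,M_0)$, I may further assume that $N_0$ contains a representative $l_*$ of $S_2$ (whenever $\mathcal{S}^+\neq\emptyset$) and $M_0$ contains a representative $j_*$ of $S_1$ (whenever $\mathcal{S}^-\neq\emptyset$); consistency with the defining conditions follows from $x(l_*)=1$, $x(j_*)=0$. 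As $S_2=\min\mathcal{S}^+$ and $S_1=\max\mathcal{S}^-$, these added points become the $\preccurlyeq_\xi$-minimum of $N_0$ and the $\preccurlyeq_\xi$-maximum of $M_0$ respectively, and Lemma~\ref{lem: need only check at extrema} reduces the main case (both $\mathcal{S}^\pm$ non-empty) to showing that $B_\varepsilon(\xi)\cap\tau(l_*)^{-1}W\setminus\tau(j_*)^{-1}W$ has non-empty interior.

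This is the crucial step and is precisely where the strict relation $S_1\prec_\xi S_2$ enters. The non-relation $\tau(l_*)\not\preccurlyeq_\xi\tau(j_*)$ produces, for every $\varepsilon>0$, some $\eta\in B_\varepsilon(\xi)\cap\tau(l_*)^{-1}W$ with $\eta\notin\tau(j_*)^{-1}W$. Closedness of $\tau(j_*)^{-1}W$ provides an open neighbourhood $U\subseteq B_\varepsilon(\xi)$ of $\eta$ disjoint from $\tau(j_*)^{-1}W$, and properness of $W$ yields $\tau(l_*)^{-1}W=\overline{\tau(l_*)^{-1}\mathrm{int}(W)}$, so that $U\cap\tau(l_*)^{-1}\mathrm{int}(W)$ is the desired non-empty open subset. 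The degenerate cases $\mathcal{S}^+=\emptyset$ (which forces $N_0=\emptyset$) and $\mathcal{S}^-=\emptyset$ (which forces $M_0=\emptyset$) are handled by the same idea: in the former one shows that $B_\varepsilon(\xi)\setminus\tau(j_*)^{-1}W$ has non-empty interior, noting that $\tau(j_*)\xi\in\partial W$ prevents $\xi$ from lying in $\mathrm{int}(\tau(j_*)^{-1}W)$; in the latter $\tau(l_*)\xi\in\partial W\subseteq\overline{\mathrm{int}(W)}$ gives $B_\varepsilon(\xi)\cap\tau(l_*)^{-1}\mathrm{int}(W)\neq\emptyset$. The main delicacy lies in this upgrade from non-empty to non-empty \emph{interior}, for which properness of $W$ (together with the strictness $S_1\prec_\xi S_2$) is indispensable.
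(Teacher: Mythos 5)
Your proof is correct and follows essentially the same route as the paper's: reduce via Lemma~\ref{lem: need only check at extrema} to the pair of extremal representatives $l_*\in S_2$, $j_*\in S_1$, then use the strictness $S_1\prec_\xi S_2$ together with properness of $W$ to produce a non-empty open subset of $B_\varepsilon(\xi)\cap T_W(\{l_*\},\{j_*\})$, and conclude with Lemma~\ref{lem: suff crit being in fiber}. The only difference is cosmetic: you make explicit the verification of the squeeze condition and the handling of the boundary cases, which the paper leaves implicit or treats as ``analogous''.
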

\begin{proof}
    Assume first that $\mathcal{S}_1$ and $\mathcal{S}_2$ are non-empty.
    In view of Lemma~\ref{lem: suff crit being in fiber}, let $N\tm\{g\in G: x(g)=1\}$ and $M\tm \{g\in G: x(g)=0\}$ be finite such that $\tau(N)\cup\tau(M)\tm \partial(W\xi^{-1})$ and let $\varepsilon>0$.
    By definition of $x$, we have $\tau(N)\tm \bigcup_{S\in \mathcal{S}^+}S$ and $\tau(M)\tm \bigcup_{S\in\mathcal{S}^-}S$.
    By making $N$ and $M$ larger if necessary, we can assume $\tau(l_0) \in S_2$ and $\tau(j_0)\in S_1$ for some $l_0\in N$, $j_0\in M$.\\
    Now, by $\tau(l_0)\prec_\xi \tau(j_0)$ and properness of the window, we obtain that  for any $\delta>0$ there exists some 
    $$\zeta\in (B_{\delta}(\xi)\cap \tau(l_0)^{-1}W) \setminus (B_{\delta}(\xi)\cap \tau(j_0)^{-1}W).$$
    Using $W = \ol{\mathrm{int}(W)}$, we can even find an open set
    $$ U_\delta \tm (B_{\delta}(\xi)\cap \tau(l_0)^{-1}W) \setminus (B_{\delta}(\xi)\cap \tau(j_0)^{-1}W) = B_{\delta}(\xi)\cap T_W(\{l_0\},\{j_0\}).$$
    Moreover, as $S_2$ is the minimum of $\mathcal{S}^+$, it follows that $\tau(l_0)\preccurlyeq_\xi \tau(l)$ for all $l\in N$.
    Similarly, we have $\tau(j_0)\succcurlyeq_\xi \tau(j)$ for all $j\in M$.
    Therefore, Lemma~\ref{lem: need only check at extrema} implies for all sufficiently small $\delta>0$ (w.l.o.g.\ $\delta\leq \varepsilon$) that
    $$ U_\delta \tm B_{\delta}(\xi)\cap T_W(\{l_0\},\{j_0\}) = B_\delta(\xi)\cap T_W(N,M) \tm B_\varepsilon(\xi) \cap T_W(N,M).$$
    Hence, $x\in \beta^{-1}(\{\xi\})$ follows from Lemma~\ref{lem: suff crit being in fiber}.
    The cases $\mathcal{S}^+ = \emptyset$ or $\mathcal{S}^- = \emptyset$ are treated in an analogous way. 
\end{proof}

As a consequence of Lemma~\ref{lem: if l_1 in Gamma then l_2} and Lemma~\ref{lem: partition yields fiber element} we can completely determine the structure of the fibre $\beta^{-1}(\{\xi\})$ in the case that $\mathcal{S}$ is totally ordered:

\begin{proposition}\label{prop: k-self similar with total order implies k+1 elements in fiber}
    Let $\xi\in H$ be a critical point.
    Assume that the set $\mathcal{S}$ of similarity classes with respect to $\xi$ is $\mathcal{S} = \set{S_1,\ldots,S_k}$, where $S_i \prec_{\xi} S_j$ if $i< j$.
    Then, we have $\beta^{-1}(\{\xi\}) = \set{x_1,\ldots,x_{k+1}}$, with 
    $$ x_j(g) = 1 \iff  \tau(g) \in \mathrm{int}(W)\xi^{-1} \cup \bigcup_{i=j}^k S_i,\; \mbox{ for } j\in \{1,\ldots, k+1\}.$$ 
    (Here, we use the usual convention that $\bigcup_{j=k+1}^k S_j=\emptyset$.) In particular, $\#\beta^{-1}(\{\xi\}) = k+1$.
\end{proposition}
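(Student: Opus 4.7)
The plan is to prove the two inclusions separately: that each $x_j$ lies in $\beta^{-1}(\{\xi\})$ (existence), and that every element of the fibre coincides with some $x_j$ (exhaustion).

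For the existence direction, I would apply Lemma~\ref{lem: partition yields fiber element} to the partition $\mathcal{S}^- = \{S_1,\ldots,S_{j-1}\}$ and $\mathcal{S}^+ = \{S_j,\ldots,S_k\}$. Because the ordering $\preccurlyeq_\xi$ is total on $\mathcal{S}$ by hypothesis, $\mathcal{S}^-$ has maximum $S_{j-1}$ and $\mathcal{S}^+$ has minimum $S_j$ whenever both are non-empty, and $S_{j-1}\prec_\xi S_j$ by assumption. The endpoint cases $j=1$ and $j=k+1$, where one of the two pieces is empty, correspond to the degenerate variants of the lemma explicitly allowed in its statement. In each case the element produced by the lemma is precisely $x_j$ as defined in the proposition, so $x_j \in \beta^{-1}(\{\xi\})$.

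For the exhaustion direction, take any $x\in \beta^{-1}(\{\xi\})$. By the sandwich characterization from Proposition~\ref{prop: torus para bis 1},
\[
\tau(G)\cap \mathrm{int}(W)\xi^{-1}\subseteq \{\tau(g):x(g)=1\}\subseteq \tau(G)\cap W\xi^{-1},
\]
so $x$ is completely determined outside the set $\partial(W\xi^{-1})\cap\tau(G) = \bigsqcup_{i=1}^k \{g\in G:\tau(g)\in S_i\}$. Within a single similarity class $S_i$, any two representatives $\tau(g_1),\tau(g_2)$ satisfy $\tau(g_1)\sim_\xi \tau(g_2)$, so applying Lemma~\ref{lem: if l_1 in Gamma then l_2} symmetrically forces $x$ to be constant on $\{g:\tau(g)\in S_i\}$. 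Moreover, if $i<i'$ and $x$ takes value $1$ on $S_i$, then $S_i\prec_\xi S_{i'}$ yields $\tau(g)\preccurlyeq_\xi \tau(g')$ for arbitrary representatives $\tau(g)\in S_i$, $\tau(g')\in S_{i'}$, and Lemma~\ref{lem: if l_1 in Gamma then l_2} forces $x$ to take value $1$ on $S_{i'}$ as well. Hence the sequence of values of $x$ on $(S_1,\ldots,S_k)$ must be of the form $(0,\ldots,0,1,\ldots,1)$, of which there are exactly $k+1$ possibilities; these correspond precisely to $x_{k+1},x_k,\ldots,x_1$.

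Pairwise distinctness of the $x_j$ then follows since each $S_i$ is non-empty (so $x_j$ and $x_{j'}$ with $j\neq j'$ disagree on $S_{\min(j,j')}$), yielding $\#\beta^{-1}(\{\xi\})=k+1$. No step presents a real obstacle: both directions are essentially bookkeeping on top of Lemmas~\ref{lem: if l_1 in Gamma then l_2} and~\ref{lem: partition yields fiber element}. The only subtlety to double-check is that the relation $\preccurlyeq_\xi$ on similarity classes is representative-independent, which is guaranteed by the discussion preceding the proposition.
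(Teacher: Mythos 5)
Your proof is correct and follows exactly the paper's argument: the paper's own (very terse) proof also obtains the inclusion $\{x_1,\ldots,x_{k+1}\}\subseteq\beta^{-1}(\{\xi\})$ from Lemma~\ref{lem: partition yields fiber element} and the reverse inclusion from Lemma~\ref{lem: if l_1 in Gamma then l_2}. Your write-up simply fills in the bookkeeping (the choice of $\mathcal{S}^\pm$, constancy on classes, monotonicity across classes, and distinctness of the $x_j$) that the paper leaves implicit.
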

 
\begin{proof}
    The inclusion $\{x_1,\ldots,x_{k+1}\}\tm \beta^{-1}(\{\xi\})$ follows from Lemma~\ref{lem: partition yields fiber element} so that equality is obtained with Lemma~\ref{lem: if l_1 in Gamma then l_2}.
\end{proof}

\section{Construction of $k$-self similar windows}\label{sect: self-similar windows}
The aim of this section is to construct, for a given odometer $\overleftarrow{G}$ and any $k\in\mathbb{N}$, proper, generic, irredundant and irregular $k$-self similar windows $W^{(k)}\subseteq\overleftarrow{G}$ such that 
\begin{itemize}
    \item the similarity classes for each element $\xi\in\overleftarrow{G}$ are strictly ordered;
    \item there are exactly $k$ similarity classes for almost every $\xi\in\overleftarrow{G}$.
\end{itemize} In the light of Proposition~\ref{prop: k-self similar with total order implies k+1 elements in fiber}, the resulting Toeplitz flows will then have maximal rank $k+1$ with exactly $k+1$ elements in almost every fibre. Hence, this construction will prove Theorem~\ref{thm: existence of k-1 Toeplitz arrays}. 

In order to do so, we will first construct a perfectly self-similar window $W_\mathrm{perf}\subseteq\overleftarrow{G}$ in Section~\ref{subsect: perfectly self similar windows}. The same window will then provide the basis for the more refined construction of $k$-self similar windows in Section~\ref{subsect: k-self similar windows}, where we obtain the $W^{(k)}$ as modifications of $W_\mathrm{perf}$. The remaining details for the proof of Theorem~\ref{thm: existence of k-1 Toeplitz arrays}, including in particular the description of the measure-theoretic structure of the Toeplitz flows, are then discussed in Section~\ref{subsect: proof of main theorem}.

In the whole section, we assume that $G$ is an infinite, countable, amenable and residually finite group, and $\overleftarrow{G}$ is a $G$-odometer defined by a decreasing sequence $(\Gamma_n)_{n\in\mathbb{N}}$ of normal finite index subgroups of $G$ with trivial intersection. In particular, $\overleftarrow{G}$ is a metric compactification of $G$, so that all the results of the previous sections apply. 

We call $\tau:G\to \overleftarrow{G}$ the injective homomorphism such that $\tau(G)$ is dense in $\overleftarrow{G}$, and denote by $\nu$ the normalized Haar measure on $\overleftarrow{G}$. Recall that $\xi=(\xi_n\Gamma_n)_{n\in\mathbb{N}}$ and $\xi'=(\xi_n'\Gamma_n)_{n\in\mathbb{N}}$  in $\overleftarrow{G}$ satisfy $d(\xi, \xi')\leq 2^{-k}$ if and only if $\xi_k\Gamma_k=\xi_k'\Gamma_k$. We will denote by $[\xi]_k$ the ball with radius $2^{-k}$ around $\xi\in \overleftarrow{G}$, where $k\in\mathbb{N}$ and $\xi\in \overleftarrow{G}$, and refer to $[\xi]_k$ as a cylinder of level $k$. Observe that there is a one-to-one correspondence between cylinders of level $k$ and the elements of $G/\Gamma_k$.

\subsection{Construction of a perfectly self similar window} \label{subsect: perfectly self similar windows}
As said above, in this section we will be concerned with the construction of a perfectly self similar window $W=W_\mathrm{perf}$. We first give the definition of $W$ depending on a sequence of partitions of the sets $D_n\cap \Gamma_{n-1}$. In the following, we then provide different criteria that allow to ensure that the resulting window has the desired properties of properness, genericity, irredundancy, irregularity and perfect self similarity, respectively. 
Finally, in the proof of Proposition~\ref{prop: W_perf}, we then show that all these criteria can be fulfilled simultaneously. \medskip

In this section, we use the notation of Section \ref{subsect: expansion}.
For every $j\in \mathbb{N}$, recall the maps $\pi_j:G\to D_j\cap \Gamma_{j-1}$ defined in \eqref{eq: expansion coefficients} by $\pi_j(g) := \varphi_{j-1}(\psi_j(g))$, where the maps $\varphi_j:G\to \Gamma_j$ and $\psi_j:G\to D_j$ are defined as the unique elements $\varphi(g)\in\Gamma_j$ and $\psi_j(g)\in D_j$ that decompose $g\in G$.
That is, $g=\psi_j(g)\varphi_j(g)$ for every $g\in G$. \medskip

Given $n\in\mathbb{N}$, let $A_n, B_n, C_n$ be non-empty sets that form a partition of $D_n\cap \Gamma_{n-1}$, that is,  
 $$A_n\uplus B_n \uplus C_n = D_n\cap \Gamma_{n-1}.$$  
In particular, since we set $\Gamma_0=G$, we have that $\{A_1,  B_1, C_1\}$ is a partition of $D_1$.
We now define a window $W=W_\mathrm{perf}$ in several steps. First, we let   
$$
X_1=\bigcup_{a\in A_1}[\tau(a)]_1, \hspace{5mm}  Y_1=\bigcup_{a\in B_1}[\tau(a)]_1, \hspace{5mm} Z_1=\bigcup_{a\in C_1}[\tau(a)]_1.
$$
Once we have defined $X_n, Y_n, Z_n$, we further let 
$$
A_{n+1}^*=\{g\in D_{n+1}: \tau(g)\in Z_n \mbox{ and } {\pi_{n+1}(g)}\in A_{n+1}\}, 
$$
$$
B_{n+1}^*=\{g\in D_{n+1}: \tau(g)\in Z_n \mbox{ and } {\pi_{n+1}(g)}\in B_{n+1}\}, 
$$ 
$$
C_{n+1}^*=\{g\in D_{n+1}: \tau(g)\in Z_n \mbox{ and } {\pi_{n+1}(g)}\in C_{n+1}\}, 
$$
and 
$$
X_{n+1}=\bigcup_{a\in A_{n+1}^*}[\tau(a)]_{n+1},$$
$$Y_{n+1}=\bigcup_{a\in B_{n+1}^*}[\tau(a)]_{n+1}$$
$$Z_{n+1}=\bigcup_{a\in C_{n+1}^*}[\tau(a)]_{n+1}.
$$
Then we set
$$
U=\bigcup_{n\in\mathbb{N}}X_n  \hspace{5mm} \mbox{ and } \hspace{5mm} V=\bigcup_{n\in\mathbb{N}}Y_n.
$$
and finally define our window $W=W_\mathrm{perf}$ as 
\begin{equation}
    \label{eq: definition W}
    W = \overline{U}.
\end{equation}
We first establish that $W$ is indeed a proper window. 
\begin{lemma}\label{lem: alternative chara W and partial W}
The set $W\subseteq \overleftarrow{G}$ defined in (\ref{eq: definition W}) is proper, with $\mathrm{int}(W)=U$ and 
 \begin{equation}\label{eq: boundary characterisation}
 \partial W=\bigcap_{n\in\mathbb{N}}Z_n=\overleftarrow{G}\setminus (V\cup U).
 \end{equation}
\end{lemma}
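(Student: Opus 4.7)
The strategy is first to decompose $\overleftarrow{G}$ into three pairwise disjoint pieces $U$, $V$ and $\bigcap_{n\in\N} Z_n$ via the combinatorial structure of the construction, and then to read off the claims about $W=\overline{U}$ from this decomposition together with two density assertions that exploit the non-emptiness of $A_n$ and $B_n$.

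First, each $X_n$, $Y_n$, $Z_n$ is a finite union of level-$n$ cylinders and therefore clopen. The key combinatorial identity is $X_{n+1}\uplus Y_{n+1}\uplus Z_{n+1} = Z_n$ for every $n\geq 1$, together with $X_1\uplus Y_1\uplus Z_1 = \overleftarrow{G}$. This is immediate from the construction: the level-$(n+1)$ cylinders contained in $Z_n$ are exactly the $[\tau(g)]_{n+1}$ with $g\in D_{n+1}$ and $\tau(g)\in Z_n$, any such $g$ satisfies $\pi_{n+1}(g)\in D_{n+1}\cap\Gamma_n$ by construction, and the partition $A_{n+1}\uplus B_{n+1}\uplus C_{n+1}=D_{n+1}\cap\Gamma_n$ induces the corresponding partition of cylinders. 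Iterating shows that the families $(X_n)_n$ and $(Y_n)_n$ are pairwise disjoint both separately and jointly, the $Z_n$ are nested, and $U=\bigsqcup_n X_n$, $V=\bigsqcup_n Y_n$ and $\bigcap_n Z_n$ partition $\overleftarrow{G}$. In particular $U$ and $V$ are open and $\bigcap_n Z_n = \overleftarrow{G}\setminus(U\cup V)$, which already establishes the second equality in \eqref{eq: boundary characterisation}.

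Next I would identify $W=\overline U$. Since $V$ is open and disjoint from $U$, the inclusion $U\subseteq\overleftarrow{G}\setminus V$ holds, and the right-hand side is closed, so $W\cap V=\emptyset$ and hence $W\subseteq U\cup\bigcap_n Z_n$. For the reverse inclusion, fix $\xi\in\bigcap_n Z_n$ and $n\in\N$. The cylinder $[\xi]_n$ is contained in $Z_n$; write $[\xi]_n=[\tau(a)]_n$ with $a\in C_n^*$, and pick any $b\in A_{n+1}$ (non-empty by hypothesis). Using the factorisation $D_{n+1}=\bigcup_{v\in D_{n+1}\cap\Gamma_n}D_n v$ from property (D4), the product $g=ab$ lies in $D_{n+1}$ with $\psi_n(g)=a$ and $\pi_{n+1}(g)=b$, so $g\in A_{n+1}^*$ and $[\tau(g)]_{n+1}\subseteq X_{n+1}\cap[\xi]_n$. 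Thus $[\xi]_n\cap U\neq\emptyset$ for every $n$, whence $\xi\in\overline U=W$. Together this yields $W = U\cup\bigcap_n Z_n$.

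Finally, I would compute $\mathrm{int}(W)$. Clearly $U\subseteq\mathrm{int}(W)$ since $U$ is open. For the converse, let $\xi\in\bigcap_n Z_n$ and $n\in\N$; repeating the previous construction with $b\in B_{n+1}$ (non-empty) produces a point of $Y_{n+1}\cap[\xi]_n\subseteq V$, so every neighbourhood of $\xi$ meets $V$. Since $V\cap W=\emptyset$, no neighbourhood of $\xi$ lies entirely in $W$, and hence $\xi\notin\mathrm{int}(W)$. Therefore $\mathrm{int}(W)=U$, which gives $W=\overline U=\overline{\mathrm{int}(W)}$ (properness) and $\partial W=W\setminus\mathrm{int}(W)=\bigcap_n Z_n$. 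The only non-formal ingredient — and the main obstacle to be overcome — is precisely this dual density assertion: each candidate boundary point $\xi\in\bigcap_n Z_n$ must be simultaneously accumulated by elements of $U$ and of $V$, and both halves rely crucially on $A_n$ and $B_n$ being non-empty at every level.
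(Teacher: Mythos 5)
Your proposal is correct and follows essentially the same route as the paper: establish that $U$, $V$ and $\bigcap_n Z_n$ partition $\overleftarrow{G}$ with $U,V$ open, and then show every $\xi\in\bigcap_n Z_n$ is accumulated both by points of $X_{n+1}\subseteq U$ (via an element of $A_{n+1}$) and by points of $Y_{n+1}\subseteq V$ (via an element of $B_{n+1}$). The paper phrases this as directly exhibiting $g=\xi_n a$ and $h=\xi_n b$ within distance $2^{-n}$ of $\xi$, which is exactly your construction of $ab$ via (D4); the two arguments are the same.
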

\begin{proof}
Both $U$ and $V$ are open as unions of cylinder sets. As these sets are disjoint, this implies $V\subseteq \overline{W}^c$ and $\partial W\subseteq \overleftarrow{G}\setminus (U\cup V)$. In order to prove the converse inclusion, suppose that $\xi=(\xi_n\Gamma_n)_{n\in\mathbb{N}}\in \overleftarrow{G}\setminus (U\cup V)$, where $\xi_n\in D_n$ for each $n\in\mathbb{N}$, and fix $n\in\N$. 
Note that since $\overleftarrow{G}\setminus(U\cup V)=\bigcap_{n\in\N} Z_n$, we have $\xi\in Z_n$. 
As $Z_n$ is a union of cylinder sets of level $n$, this implies $[\tau(\xi_n)]_n\subseteq Z_n$. Hence, if we choose $a\in A_{n+1}$ and $b\in B_{n+1}$ and let 
$g=\xi_na$ and  $h=\xi_nb$, we have $\tau(g)\in X_{n+1}\subseteq U$ and $\tau(h)\in Y_{n+1} \subseteq V$. Since both $\tau(g)$ and $\tau(h)$ have distance less than $2^{-n}$ from $\xi$ and $n\in\N$ was arbitrary, we obtain $\xi\in\partial W$. This shows  \eqref{eq: boundary characterisation}, which further implies $U=\mathrm{int}(W)$ and thus $W=\overline{\mathrm{int}(W)}$ as required.   
 \end{proof}   

In the following lemmas, we will establish criteria that allow to choose the partitions $\{A_n,B_n,C_n\}$ in the above construction in such a way that the resulting window $W$ is generic, irredundant, irregular and perfectly self-similar.
We start with genericity. 

\begin{lemma}\label{genericity}
The window $W$ is generic if and only if $1_G\notin C_n$ for infinitely many $n\in\N$. 
\end{lemma}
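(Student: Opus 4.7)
The strategy is to translate the condition $\tau(g)\in \partial W$ into a condition on the expansion coefficients $\pi_j(g)$ introduced in Section~\ref{subsect: expansion}, and then to exploit the fact that $\pi_j(g)=1_G$ for all but finitely many $j$.

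First, I will invoke Lemma~\ref{lem: alternative chara W and partial W} to reduce the statement to: $\partial W \cap \tau(G) = \emptyset$ if and only if $1_G\notin C_n$ for infinitely many $n$. Since $\partial W=\bigcap_{n\in\N} Z_n$, it suffices to characterise when $\tau(g)\in Z_n$ purely in terms of the coefficients $\pi_j(g)$. The key intermediate step will be to prove by induction on $n$ that
\[
\tau(g)\in Z_n \iff \pi_j(g)\in C_j \text{ for all } j=1,\dots,n.
\]
The base case $n=1$ follows from $\pi_1(g)=\psi_1(g)$ together with the definition $Z_1=\bigcup_{a\in C_1}[\tau(a)]_1$, using that cylinders of level $1$ are in bijection with $D_1$ via $\psi_1$. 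For the inductive step, since $Z_n=\bigcup_{a\in C_n^*}[\tau(a)]_n$ is a union of cylinders of level $n$, membership $\tau(g)\in Z_n$ is equivalent to $\psi_n(g)\in C_n^*$. By definition of $C_n^*$, this amounts to $\pi_n(\psi_n(g))\in C_n$ and $\tau(\psi_n(g))\in Z_{n-1}$. Using the identities $\pi_n(\psi_n(g))=\pi_n(g)$ and $\pi_j(\psi_n(g))=\pi_j(g)$ for $j\leq n$ from Lemma~\ref{lem: basic properties pi_j}, the inductive hypothesis applied to $\psi_n(g)$ yields the desired characterisation.

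Granted this, $\tau(g)\in \partial W$ is equivalent to $\pi_j(g)\in C_j$ for every $j\in\N$. For the forward direction of the lemma, suppose $W$ is not generic, i.e.\ there is $g\in G$ with $\tau(g)\in\partial W$. Since $\pi_j(g)=1_G$ for all $j>N(g)+1$, this forces $1_G\in C_j$ for all such $j$, so $1_G\notin C_n$ only finitely often. Contrapositively, if $1_G\notin C_n$ for infinitely many $n$, then $W$ is generic.

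For the converse, assume $1_G\in C_j$ for all $j\geq J$. Using that each $C_j$ is non-empty, pick $c_j\in C_j\subseteq D_j\cap\Gamma_{j-1}$ for $j=1,\dots,J-1$, and set $g=c_1c_2\cdots c_{J-1}$. By Lemma~\ref{eq: uniqueness of expansion}, we have $\pi_j(g)=c_j\in C_j$ for $j<J$, and $\pi_j(g)=1_G\in C_j$ for $j\geq J$. Hence $\pi_j(g)\in C_j$ for every $j$, so $\tau(g)\in\bigcap_n Z_n=\partial W$ and $W$ fails to be generic. This completes both implications.

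The only mild technical point is ensuring the inductive characterisation of $Z_n$ via $\pi_j$ is carried out cleanly; once this is in place, the rest of the argument is a matter of matching the finite-expansion property of $g$ against the condition on the $C_j$.
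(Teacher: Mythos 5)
Your proof is correct and follows essentially the same route as the paper's: both reduce genericity to the identity $\partial W=\bigcap_n Z_n=\{\xi:\pi_j(\xi)\in C_j\ \forall j\}$ and then use that elements of $\tau(G)$ have only finitely many nontrivial expansion coefficients, constructing $g=\prod_j c_j$ with $c_j\in C_j$ eventually equal to $1_G$ for the converse. The only difference is that you spell out the induction characterising $Z_n$ via the $\pi_j$, which the paper states more briefly.
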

\begin{proof}
By Lemma~\ref{lem: alternative chara W and partial W}, we have  
$$ \partial W = \bigcap_{n\in\N} Z_n  = \left\{\xi\in \overleftarrow{G}: \pi_n(\xi) \in C_n \text{ for all } n\in \N\right\}.$$
Moreover, using property (3) of Lemma~\ref{lem: ex. of seq. of fundamental domains}, we see that $\xi\in \tau(G)$ if and only if $\pi_n(\xi) \neq 1_G$ for only finitely many $n$.
This yields `$\Leftarrow$'.

Conversely, suppose that $1_G\in C_n$ for all but finitely many $n\in\N$. Choose a sequence $(g_n)_{n\in\N}$ with $g_n\in C_n$ and $g_n=1_G$ for all but finitely many $n$. Then $g=\prod_{n\in\N} g_n$ is well-defined and satisfies $\tau(g)\in\partial W$, so that $W$ is not generic. This shows `$\Rightarrow$'. 
\end{proof}

Regarding irredundancy, we will use the following sufficient criterion, which can also be applied to general windows in $\overleftarrow{G}$. Given a cylinder set $S\subseteq \overleftarrow{G}$ of level $n\in\N_0$, let 
$$ \mathcal{M}_n(W,S) =\{S'\tm S:S' \text{ cylinder set of level } n+1 \text{ and } S' \tm \overleftarrow{G}\setminus W\}.$$
Thereby, we understand $\overleftarrow{G}$ to be the unique level $0$ cylinder set in $\overleftarrow{G}$. 
\begin{lemma}\label{lem: suff. crit. irredundancy}
    Assume that for every $n\in\N_0$ there exists a cylinder set $S\tm \overleftarrow{G}$ of level $n$  which satisfies $S\cap W \neq \emptyset$, $S\cap (\overleftarrow{G}\setminus W)\neq \emptyset$ and $\# \mathcal{M}_n(W,S) = 1$.
    Then $W$ is irredundant.
\end{lemma}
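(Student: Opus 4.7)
The plan is to prove by induction on $n\in\N_0$ that any $\xi\in\overleftarrow{G}$ satisfying $W\xi=W$ must belong to the cylinder $[1_{\overleftarrow{G}}]_n$. Since $\bigcap_{n\in\N_0}[1_{\overleftarrow{G}}]_n=\{1_{\overleftarrow{G}}\}$, this will force $\xi=1_{\overleftarrow{G}}$ and establish irredundancy. The base case $n=0$ is immediate because $[1_{\overleftarrow{G}}]_0=\overleftarrow{G}$.

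The key structural ingredient I will record first is that, since each $\Gamma_n$ is normal, right multiplication $\rho_\xi\colon\zeta\mapsto\zeta\xi$ on $\overleftarrow{G}$ permutes the cylinders of level $n$: for any such cylinder $S=[\eta]_n$ one computes $S\xi=[\eta\xi]_n$, so that $S\xi=S$ if and only if $\xi_n\in\Gamma_n$, i.e., $\xi\in[1_{\overleftarrow{G}}]_n$. Combined with the hypothesis $W\xi=W$ (which also gives $(\overleftarrow{G}\setminus W)\xi=\overleftarrow{G}\setminus W$), this means that whenever $S\xi=S$ holds, the map $\rho_\xi$ further restricts to a bijection $\mathcal{M}_n(W,S)\to\mathcal{M}_n(W,S)$.

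For the inductive step, assume $\xi\in[1_{\overleftarrow{G}}]_n$ and choose, by the hypothesis of the lemma, a level-$n$ cylinder $S$ with $\#\mathcal{M}_n(W,S)=1$; denote by $S^\ast$ the unique element of that set. Since $S\xi=S$ by the inductive hypothesis, the bijection observation above forces $\rho_\xi$ to act on the singleton $\mathcal{M}_n(W,S)=\{S^\ast\}$, whence $S^\ast\xi=S^\ast$. Because $S^\ast$ is a cylinder of level $n+1$, the characterization established above yields $\xi\in[1_{\overleftarrow{G}}]_{n+1}$, closing the induction.

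I do not anticipate any substantial obstacle; the content of the argument reduces to the elementary combinatorial observation that a unique element of an invariant finite family must itself be invariant, together with the normality of the $\Gamma_n$, which translates setwise invariance of a level-$n$ cylinder into membership in $[1_{\overleftarrow{G}}]_n$. The additional conditions $S\cap W\neq\emptyset$ and $S\cap(\overleftarrow{G}\setminus W)\neq\emptyset$ in the hypothesis are not strictly needed for the above implication (in fact, the second follows from $\#\mathcal{M}_n(W,S)=1$), but they ensure the criterion is only invoked at genuine boundary cylinders.
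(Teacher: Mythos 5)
Your proof is correct and follows essentially the same route as the paper's: an induction showing $\xi_n\in\Gamma_n$ for all $n$, where at each step the uniqueness of the level-$(n+1)$ cylinder in $\mathcal{M}_n(W,S)$ together with $S\xi=S$ and $W\xi=W$ forces that cylinder to be fixed by right multiplication by $\xi$. Your packaging of the step as ``$\rho_\xi$ permutes the singleton $\mathcal{M}_n(W,S)$'' and your remark that $S\cap(\overleftarrow{G}\setminus W)\neq\emptyset$ is implied by $\#\mathcal{M}_n(W,S)=1$ are accurate but only cosmetic differences.
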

\begin{proof}
    Let $\xi=(\xi_n\Gamma_n)_{n\in\mathbb{N}}\in \overleftarrow{G}$  such that $W\xi=W.$ We need to show that $\xi=1_{\overleftarrow{G}}$, which means that $\xi_n\in \Gamma_n$ for all $n\in\N$. In order to do so, we proceed by induction on $n$.
    
    Let $S_1$ be the unique cylinder set of level $1$ such that $S_1\tm \overleftarrow{G}\setminus W$.
    Then, $S_1\xi \tm \overleftarrow{G}\setminus W$, as well, so that $ S_1\xi = S_1$.
    This is equivalent to $\xi_1 \in \Gamma_1$.
    
    Assume now that we have shown $\xi_j \in \Gamma_j$ for all $j\in \set{1,\ldots, n}$.
    Take some cylinder set $S_n$ of level $n$ such that $S_n\cap W \neq \emptyset$, $S_n\cap (\overleftarrow{G}\setminus W)\neq \emptyset$ and $\# \mathcal{M}_n(W,S_n) =1$.
    Since we are assuming $\xi_n\in  \Gamma_n$, we have $ S_n\xi = S_n$.
    Hence, for the unique element $S_{n+1}$ of $\mathcal{M}_n(W,S_n)$ we have both $ S_{n+1}\xi\tm S_n$ and $S_{n+1}\xi \tm \overleftarrow{G}\setminus W$.
    Therefore, $S_{n+1}\xi = S_{n+1}$ and thus $\xi_{n+1} \in\Gamma_{n+1}$, finishing the proof.
\end{proof}
For our specific window $W$ defined by \eqref{eq: definition W}, we have an easy characterization of the conditions of the previous lemma.
The proof is left to the reader.
\begin{lemma}\label{lem: chara nonempty intersect with W and complement}
    Let $C\tm \overleftarrow{G}$ be a cylinder of level $n$.
    The following statements are equivalent:
    \begin{enumerate}
        \item $C\cap W\neq \emptyset \neq  C\cap (\overleftarrow{G}\setminus W)$.
        \item $C\cap Z_n \neq \emptyset$.
        \item $C\tm Z_n$.
        \item $C\cap \partial W\neq \emptyset$.
    \end{enumerate}
    Furthermore, the following statements are equivalent:
    \begin{enumerate}
        \item $\#\mathcal{M}_n(W,C)=1$ for every cylinder $C$ of level $n$ with $C\cap W\neq \emptyset \neq  C\cap (\overleftarrow{G}\setminus W)$.
        \item $\# B_{n+1} =1$.
    \end{enumerate}
\end{lemma}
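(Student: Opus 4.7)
The plan is to exploit the nested cylinder structure that was built into the construction of $W$, so that intersections of cylinders of level $n$ with $W$ and its complement are controlled entirely by the partition $\{A_n,B_n,C_n\}$ of $D_n\cap\Gamma_{n-1}$.

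First I would record the basic structural observation that for every $n\in\N$ one has a disjoint decomposition
\[
\overleftarrow{G} \;=\; Z_n \;\uplus\; \biguplus_{j=1}^n (X_j \cup Y_j),
\]
where each $X_j, Y_j$ is a union of cylinders of level $j$ and $Z_n$ is a union of cylinders of level $n$. This is by induction on $n$: at level $1$ it follows from $A_1\uplus B_1\uplus C_1 = D_1$, and at level $n+1$ one applies the inductive step to each cylinder of level $n$ contained in $Z_n$, noting that the subdivision into cylinders of level $n+1$ sitting in $X_{n+1},Y_{n+1},Z_{n+1}$ is exactly the partition $\{A_{n+1},B_{n+1},C_{n+1}\}$ of $D_{n+1}\cap\Gamma_n$ (property (D4) of Lemma~\ref{lem: ex. of seq. of fundamental domains} ensures that the level-$(n+1)$ cylinders in a fixed level-$n$ cylinder $[\tau(\xi_n)]_n$ are indexed by $d\in D_{n+1}\cap\Gamma_n$ via $[\tau(\xi_n d)]_{n+1}$).

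For the first equivalence, I would then argue in a cycle. Since $X_j\subseteq U\subseteq W$ and $Y_j\subseteq V\subseteq \overleftarrow{G}\setminus W$ by Lemma~\ref{lem: alternative chara W and partial W}, a level-$n$ cylinder $C$ satisfying (1) cannot be contained in any $X_j$ or $Y_j$ with $j\leq n$, hence $C\subseteq Z_n$, giving (3). The implication (3) $\Rightarrow$ (2) is trivial, and (2) $\Rightarrow$ (3) holds because $Z_n$ is a union of level-$n$ cylinders, which are pairwise either equal or disjoint. For (3) $\Rightarrow$ (4) I would iterate: inside $C\subseteq Z_n$, non-emptiness of $C_{m+1}$ for all $m\geq n$ produces a decreasing sequence of non-empty compact level-$m$ cylinders contained in $Z_m$, whose intersection lies in $\bigcap_m Z_m = \partial W$ by Lemma~\ref{lem: alternative chara W and partial W}. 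Finally (4) $\Rightarrow$ (1) is immediate since $\partial W\subseteq W$ and, within $C\subseteq Z_n$, any fixed $b\in B_{n+1}$ produces a level-$(n+1)$ sub-cylinder contained in $Y_{n+1}\subseteq \overleftarrow{G}\setminus W$.

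For the second equivalence, fix a level-$n$ cylinder $C$ satisfying (1)--(4) and let $\xi_n\in D_n$ be the representative with $C = [\tau(\xi_n)]_n$. The level-$(n+1)$ cylinders inside $C$ are exactly $\{[\tau(\xi_n d)]_{n+1} : d\in D_{n+1}\cap\Gamma_n\}$. A direct computation using uniqueness of the decomposition $\xi_n d = \psi_n(\xi_n d)\varphi_n(\xi_n d)$ (with $\xi_n\in D_n$ and $d\in\Gamma_n$) gives $\psi_n(\xi_n d) = \xi_n$ and hence $\pi_{n+1}(\xi_n d) = d$; combined with $C\subseteq Z_n$ this shows $\xi_n d\in B_{n+1}^*$ iff $d\in B_{n+1}$. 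Since the level-$(n+1)$ cylinders in $C$ which lie in $\overleftarrow{G}\setminus W$ are precisely those contained in $Y_{n+1}$, I conclude $\#\mathcal{M}_n(W,C) = \#B_{n+1}$, independently of $C$. This immediately gives the equivalence (1)$\iff$(2) of the second part of the lemma. The only mildly delicate point is to keep careful track of the identifications $\pi_{n+1}(\xi_n d) = d$ and $\psi_n(\xi_n d) = \xi_n$; everything else is a bookkeeping argument on the tree of cylinders.
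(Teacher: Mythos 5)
Your proof is correct, and since the paper explicitly leaves this lemma to the reader, your argument is exactly the intended bookkeeping on the tree of cylinders: the disjoint decomposition $\overleftarrow{G}=Z_n\uplus\biguplus_{j=1}^n(X_j\uplus Y_j)$ into unions of level-$n$ cylinders, together with $X_j\subseteq \mathrm{int}(W)$, $Y_j\subseteq\overleftarrow{G}\setminus W$ and $\partial W=\bigcap_m Z_m$ from Lemma~\ref{lem: alternative chara W and partial W}, and the identification $\pi_{n+1}(\xi_n d)=d$ for $d\in D_{n+1}\cap\Gamma_n$. The only point worth making explicit is that in the count $\#\mathcal{M}_n(W,C)=\#B_{n+1}$ the sub-cylinders lying in $Z_{n+1}$ must be excluded because they meet $W$ (by the already-established first equivalence applied at level $n+1$), and that the second equivalence is non-vacuous because $C_m\neq\emptyset$ for all $m$ guarantees $Z_n\neq\emptyset$.
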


This leads to the following simple sufficient criterion for irredundancy.
\begin{corollary}\label{cor: suff crit irredundancy}
    If $\#B_n= 1$ for all $n\in\N$, then $W$ is irredundant.
\end{corollary}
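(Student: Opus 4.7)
The plan is to derive the corollary as a direct consequence of Lemma~\ref{lem: suff. crit. irredundancy}, using the equivalences in Lemma~\ref{lem: chara nonempty intersect with W and complement} to translate the hypothesis $\#B_n=1$ into the relevant conditions on cylinder sets. Specifically, the task will reduce to exhibiting, for every $n\in\N_0$, a cylinder $S\subseteq \overleftarrow{G}$ of level $n$ that (i) meets both $W$ and $\overleftarrow{G}\setminus W$ non-trivially, and (ii) satisfies $\#\mathcal{M}_n(W,S)=1$. Condition (ii) is automatic: the second part of Lemma~\ref{lem: chara nonempty intersect with W and complement} yields $\#\mathcal{M}_n(W,S)=1$ for every cylinder $S$ of level $n$ meeting both $W$ and its complement, as soon as $\#B_{n+1}=1$, which holds by assumption.

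It remains to produce, for every $n$, some cylinder of level $n$ satisfying (i). For $n=0$, the unique level $0$ cylinder is $\overleftarrow{G}$ itself, and non-emptiness of $A_1$ and $B_1$ (built into the standing assumption that $A_n, B_n, C_n$ partition $D_n\cap\Gamma_{n-1}$) yields $X_1\subseteq W$ and $Y_1\subseteq \overleftarrow{G}\setminus W$ both non-empty, so (i) is satisfied trivially. For $n\geq 1$, the first part of Lemma~\ref{lem: chara nonempty intersect with W and complement} shows that any level $n$ cylinder contained in $Z_n$ satisfies (i), so it suffices to verify $Z_n\neq\emptyset$.

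I would prove $Z_n\neq\emptyset$ by induction on $n$, using that each $C_n$ is non-empty. The base case $n=1$ is immediate since $Z_1=\bigcup_{a\in C_1}[\tau(a)]_1$ and $C_1\neq\emptyset$. For the inductive step, fix $u\in D_n$ with $[\tau(u)]_n\subseteq Z_n$ and pick any $c\in C_{n+1}\subseteq D_{n+1}\cap \Gamma_n$. Property (D4) of Lemma~\ref{lem: ex. of seq. of fundamental domains} yields $uc\in D_{n+1}$; uniqueness of the expansion (Lemma~\ref{eq: uniqueness of expansion}) gives $\pi_{n+1}(uc)=c\in C_{n+1}$; and $\tau(uc)\in [\tau(u)]_n\subseteq Z_n$ because $c\in \Gamma_n$. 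Hence $uc\in C_{n+1}^*$, so $[\tau(uc)]_{n+1}\subseteq Z_{n+1}$, completing the induction.

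The argument is essentially bookkeeping, and I expect no genuine obstruction. The only step requiring any care is the inductive verification that $Z_n$ is non-empty at every level, which is routine once the recursive definition of $Z_n$ is combined with property (D4) of the chosen fundamental domains.
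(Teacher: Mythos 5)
Your proof is correct and follows exactly the route the paper intends: the corollary is stated there without proof as an immediate consequence of Lemma~\ref{lem: suff. crit. irredundancy} combined with Lemma~\ref{lem: chara nonempty intersect with W and complement}. The only content you add is the explicit inductive verification that $Z_n\neq\emptyset$ (equivalently, that a level-$n$ cylinder meeting both $W$ and its complement exists), which the paper leaves implicit but which is indeed needed and which you carry out correctly via (D4) and the uniqueness of the expansion.
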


Since the sequence of sets $(Z_n)_{n\in\mathbb{N}}$ is decreasing, we can easily compute the measure of the boundary $\partial W$. This will allow to ensure the irregularity of the window. 

\begin{lemma}\label{lem: measure of boundary}
    Let $W\subseteq \overleftarrow{G}$ be the set defined in (\ref{eq: definition W}). Then
    $$\nu(\partial W) = \lim_{n\to\infty} \frac{1}{\# D_n} \prod_{k=1}^n \# C_k = \lim_{n\to\infty} \prod_{k=1}^n \frac{\# C_k}{\#( D_k \cap \Gamma_{k-1})}.$$
\end{lemma}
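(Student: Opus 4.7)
The plan is to compute $\nu(Z_n)$ exactly for each $n$ and then pass to the limit. By Lemma~\ref{lem: alternative chara W and partial W}, $\partial W=\bigcap_{n\in\N} Z_n$, and the sequence $(Z_n)_{n\in\N}$ is decreasing by construction. Since $\nu(Z_1)<\infty$, continuity of $\nu$ from above yields
$$\nu(\partial W)=\lim_{n\to\infty}\nu(Z_n),$$
so the task reduces to computing $\nu(Z_n)$.

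The key step is to prove by induction on $n$ that $Z_n$ is a disjoint union of exactly $\prod_{k=1}^n \#C_k$ cylinder sets of level $n$. The base case $n=1$ is immediate from the definition $Z_1=\bigcup_{a\in C_1}[\tau(a)]_1$. For the inductive step, recall that $Z_{n+1}$ is indexed by $C_{n+1}^*=\{g\in D_{n+1}:\tau(g)\in Z_n\text{ and }\pi_{n+1}(g)\in C_{n+1}\}$. By property (D4) of Lemma~\ref{lem: ex. of seq. of fundamental domains}, together with uniqueness of the decomposition $g=\psi_n(g)\cdot\pi_{n+1}(g)$, every $g\in D_{n+1}$ admits a unique representation $g=d\cdot v$ with $d\in D_n$ and $v\in D_{n+1}\cap\Gamma_n$. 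Since $v\in\Gamma_n$, we have $[\tau(g)]_n=[\tau(d)]_n$, so the condition $\tau(g)\in Z_n$ is equivalent to $d$ indexing one of the cylinders of $Z_n$. Hence $\#C_{n+1}^*=(\text{number of cylinders of }Z_n)\cdot\#C_{n+1}$, which closes the induction.

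Since every cylinder of level $n$ has $\nu$-measure $1/[G:\Gamma_n]=1/\#D_n$ by \eqref{measure_definition}, we obtain
$$\nu(Z_n)=\frac{\prod_{k=1}^n \#C_k}{\#D_n},$$
and the first equality follows upon taking the limit. For the second equality, iterating the identity $\#D_k=\#D_{k-1}\cdot\#(D_k\cap\Gamma_{k-1})$ (again from (D4) combined with uniqueness of the decomposition, using $D_0=\{1_G\}$) gives $\#D_n=\prod_{k=1}^n \#(D_k\cap\Gamma_{k-1})$. Dividing the products term by term yields the stated expression.

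The argument is essentially bookkeeping, and no substantial obstacle arises; the only point requiring mild care is to correctly identify, via the unique decomposition $g=\psi_n(g)\cdot\pi_{n+1}(g)$, how the level-$(n{+}1)$ cylinders of $Z_{n+1}$ sit inside the level-$n$ cylinders of $Z_n$, so that the counting $\#C_{n+1}^*=\#C_n^*\cdot\#C_{n+1}$ is justified rigorously.
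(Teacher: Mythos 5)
Your proof is correct and follows essentially the same route as the paper's: both reduce to $\nu(\partial W)=\lim_n\nu(Z_n)$ via continuity from above, count that $Z_n$ consists of $\prod_{k=1}^n\#C_k$ disjoint level-$n$ cylinders each of measure $1/\#D_n$, and convert $\#D_n=\prod_{k=1}^n\#(D_k\cap\Gamma_{k-1})$ using (D4). The only difference is that you make explicit the inductive counting of $\#C_{n+1}^*$ which the paper dismisses as "by construction"; that added detail is accurate.
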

\begin{proof}
Since the sequence of sets $(Z_n)_{n\in\mathbb{N}}$ is decreasing and we have $\partial W=\bigcap_{n\in\N} Z_n$ by  Lemma~\ref{lem: alternative chara W and partial W}, we obtain $\nu(\partial W)=\lim_{n\to \infty}\nu(Z_n)$. The measure of any cylinder set of level $n$ is equal to $\frac{1}{\#D_n}$, and by construction $Z_n$ contains $\prod_{k=1}^n \# C_k$ of such cylinder sets. Together, this yields 
\[
\nu(Z_n)=\frac{1}{\#D_n}\prod_{k=1}^n\#C_k=\prod_{k=1}^n\frac{\#C_k}{\#(D_k\cap \Gamma_{k-1})}.  \qedhere
\]
\end{proof}

If $\#B_n = 1$ and $1_G \notin C_n$ for every $n \in \mathbb{N}$, then Lemmas~\ref{lem: alternative chara W and partial W} and~\ref{genericity} together with Corollary~\ref{cor: suff crit irredundancy} imply that the window $W$ resulting from the above construction is proper, generic and irredundant. In combination with Theorem~\ref{thm: Toeplitz is model set}, this yields that the model set $x_W \in \{0,1\}^G$ is a Toeplitz array. Moreover, if $\nu(\partial W)>0$, then $x_W$ is irregular. In the light of Lemma~\ref{lem: measure of boundary}, this will be the case when $\#(A_n\cup B_n)$ grows sufficiently slow compared to $\frac{\# D_{n+1}}{\#D_n}=\#(D_{n+1}\cap \Gamma_n)$. It remains to provide an efficient criterion for the perfect self-similarity of the window. 
\smallskip

In order to do so, we will exploit the fact that $G$ is amenable, which so far has not been needed in this section. 
Further, we use the fact mentioned above that for every $g\in G$ there exists $n\in\N$ such that $\pi_j(g)=1_G$ for all $j>n$. Hence, the expansion $g = \prod_{j=1}^\infty \pi_j(g)$ can always be understood as a product of finitely many group elements. We note that this is true only due to the particular choice of the fundamental domains $D_n$ according to Lemma~\ref{lem: ex. of seq. of fundamental domains}, namely the fact that $G=\bigcup_{n\in\N} D_n$. This is unlike the standard $p$-adic expansion, where for instance the expansion of $-1\in \Z$ is given by $(p-1,p^2-1,\ldots)$.

Applying an inductive argument, we have that the sets $C_{n+1}^*$ can be written as 
\begin{align*}
    C_{n+1}^*=\{g\in D_{n+1} : \pi_j(g)\in C_j, 1\leq j \leq n+1\}.
\end{align*}
Therefore, we have that $\xi\in Z_n$ if and only if $\pi_j(\xi)\in C_j$ for every $j\in \{1,\ldots, n\}$.
\begin{lemma}\label{perfectly-self-similar}
    Let $W\subseteq \overleftarrow{G}$ be the set defined in (\ref{eq: definition W}) and suppose that $W$ is irredundant. Moreover, assume that for every $n\in\mathbb{N}$ we have
\begin{equation} \label{eq: C_n is in the inside of D_n}
 K_n\cdot C_n \tm D_n,
\end{equation}
where $K_n$ is the range of the map $d_n:G\times G \to G$ defined in Remark~\ref{rem: carry-over maps}. Then $W$ is perfectly self similar.
\end{lemma}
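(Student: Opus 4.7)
The plan is to prove that for every critical $\xi\in\overleftarrow{G}$ there is exactly one similarity class with respect to $\xi$. Fix $\tau(g_1),\tau(g_2)\in\partial(W\xi^{-1})\cap\tau(G)$ and set $\eta_i:=\tau(g_i)\xi\in\partial W$, so that $\pi_j(\eta_i)\in C_j$ for every $j\in\N$. Recall that $\eta\in\overleftarrow{G}$ lies in $W=\overline{U}$ if and only if either $\pi_j(\eta)\in C_j$ for all $j$, or the smallest index $j$ with $\pi_j(\eta)\notin C_j$ satisfies $\pi_j(\eta)\in A_j$. Using the bi-invariance of $d$ together with the normality of $[1]_N$ in $\overleftarrow{G}$ (it is the kernel of the projection $\overleftarrow{G}\to G/\Gamma_N$), so that $\chi\in [1]_N$ iff $\tilde\chi:=\xi^{-1}\chi\xi\in[1]_N$, the similarity condition $\tau(g_1)\sim_\xi\tau(g_2)$ is equivalent, under the identification $\tau(g_i)\chi\xi=\eta_i\tilde\chi$, to the existence of some $N\in\N$ such that
\[
\eta_1\tilde\chi\in W\ \Longleftrightarrow\ \eta_2\tilde\chi\in W\qquad\text{for every }\tilde\chi\in[1]_N.
\]

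The core step is to show, using the hypothesis $K_n\cdot C_n\subseteq D_n$, that left multiplication by $\tau(g):=\tau(g_2 g_1^{-1})$ stabilises the tail of the expansion of any point in $\partial W$: for every $\eta\in\partial W$ and every $j\geq N(g)+3$ one has $\pi_j(\tau(g)\eta)=\pi_j(\eta)$. Extending the carry-over formula from Lemma~\ref{lem: carry over rule} to $G\times\overleftarrow{G}$ by continuity (using Remark~\ref{rem: extension of psi_j and pi_j}) and exploiting $\pi_j(g)=1_G$ for $j>N(g)+1$, the $j$-th intermediate term reduces to $c_j^{(g,\eta)}=d_j^{(g,\eta)}\cdot\pi_j(\eta)$. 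Since $d_j^{(g,\eta)}\in K_j$ by definition and $\pi_j(\eta)\in C_j$ by $\eta\in\partial W$, the hypothesis yields $c_j^{(g,\eta)}\in D_j$, so $\psi_j(c_j^{(g,\eta)})=d_j^{(g,\eta)}\pi_j(\eta)$ and $d_{j+1}^{(g,\eta)}=\varphi_j(c_j^{(g,\eta)})=1_G$. Starting the induction at $j=N(g)+2$ then gives $d_j^{(g,\eta)}=1_G$ and $\pi_j(\tau(g)\eta)=\pi_j(\eta)$ for every $j\geq N(g)+3$. Applied to $\eta=\eta_1$, this produces $\pi_j(\eta_1)=\pi_j(\eta_2)$ for all $j\geq N(g)+3$.

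To conclude, set $N:=N(g)+2$ and take $\tilde\chi\in[1]_N$, so that $\pi_j(\tilde\chi)=1_G$ for $j\leq N$. Running the carry-over rule for the product $\eta_i\tilde\chi$, an induction entirely analogous to the previous paragraph (again using $K_j\cdot C_j\subseteq D_j$ and $\pi_j(\eta_i)\in C_j$) shows that $d_j^{(\eta_i,\tilde\chi)}=1_G$ and $\pi_j(\eta_i\tilde\chi)=\pi_j(\eta_i)\in C_j$ for every $j\leq N$ and both $i=1,2$. For $j>N$ we have $\pi_j(\eta_1)=\pi_j(\eta_2)$ by the previous paragraph, while the remaining inputs of the carry-over rule (the values $d_j$, $\psi_{j-1}(\tilde\chi)$ and $\pi_j(\tilde\chi)$) are common to both $i$, so a further induction on $j$ delivers $\pi_j(\eta_1\tilde\chi)=\pi_j(\eta_2\tilde\chi)$ for every $j>N$. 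Consequently the expansion sequences of $\eta_1\tilde\chi$ and $\eta_2\tilde\chi$ lie coefficient-wise in $C_\bullet$ up to level $N$ and coincide from level $N+1$ onwards, so the combinatorial description of $W$ recalled in the first paragraph immediately yields $\eta_1\tilde\chi\in W$ iff $\eta_2\tilde\chi\in W$, as required.

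The main obstacle is the careful bookkeeping of the carry-over sequence in the non-abelian setting. Without the hypothesis $K_n\cdot C_n\subseteq D_n$, a carry-over generated at a finite level could in principle propagate indefinitely, so that the expansions of $\eta_1$ and $\tau(g)\eta_1=\eta_2$ would differ at arbitrarily high indices and the inductions above would collapse. The hypothesis is designed precisely to force the carry-over sequence to become trivial after only finitely many levels, which is what makes both inductions go through.
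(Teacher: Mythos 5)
Your proof is correct and follows essentially the same route as the paper's: both arguments reduce similarity to the fact that multiplying a boundary point by a fixed group element leaves the tail of its expansion unchanged, which is exactly where the hypothesis $K_n\cdot C_n\subseteq D_n$ kills the carry-over via Lemma~\ref{lem: carry over rule}. The only difference is organizational — the paper tracks membership in the complement $V$ of $W$ for points $\zeta\in[t^{(1)}]_M\cap V\xi^{-1}$, whereas you conjugate the neighbourhood to the right and compare the coefficient sequences of $\eta_1\tilde\chi$ and $\eta_2\tilde\chi$ directly against the coefficient-wise description of $W$.
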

\begin{proof}
Let $\xi \in \overleftarrow{G}$ and $t^{(1)},t^{(2)} \in \partial(W\xi^{-1})\cap \tau(G)$.
Note that in particular we have $\eta:= t^{(2)}(t^{(1)})^{-1}\in \tau(G)$.
Let $M\in\N$ be such $\pi_n(\eta) = 1_G$ for all $n\geq M$.
We claim that 
$$ (t^{(1)})^{-1}\cdot\kl[t^{(1)}]_M \cap (V\xi^{-1})\kr  = (t^{(2)})^{-1}\cdot\kl[t^{(2)}]_M \cap (V\xi^{-1})\kr,$$
which implies, due to $W=\overleftarrow{G}\setminus V$ that $t^{(1)}$ and $t^{(2)}$ are similar with respect to $\xi$.
To that end, let $\zeta \in [t^{(1)}]_M\cap V\xi^{-1}$.
By $\zeta \in V\xi^{-1}$ and definition of $V$, there exists $N\in\N$ such that $\pi_n(\zeta\xi) \in C_n$ for all $n<N$ and $\pi_N(\zeta\xi) \in B_N$.
However, due to $\zeta \in [t^{(1)}]_M$ and $t^{(1)} \in \partial (W \xi^{-1})$, we obtain for all $n\leq M$ that
$$ \pi_n(\zeta\xi) = \pi_n( t^{(1)}\xi) \in C_n.$$
Hence, $N>M$.
Now, in order to prove that $ t^{(2)}(t^{(1)})^{-1}\zeta =\eta\zeta\in V \xi^{-1} $, we need to compute $\pi_n(\eta\zeta\xi)$.
First, note that since $\zeta \in [t^{(1)}]_M$ it follows that $\eta\zeta\xi= t^{(2)}(t^{(1)})^{-1}\zeta\xi \in [t^{(2)}\xi]_M$.
Thus, as before we obtain 
$$ \pi_n(\eta\zeta\xi) = \pi_n(t^{(2)}\xi) \in C_n$$
for all $n\leq M$.
For the case $n>M$, we make use of Lemma~\ref{lem: carry over rule} and Remark~\ref{rem: extension of psi_j and pi_j} with $g=\psi_n(\eta)$ and $h=\psi_n(\zeta\xi)$, from which we obtain 
\begin{align*}
    c_M =   d_M\cdot \alpha_{\psi_{M-1}(\zeta\xi)}(\underbrace{\pi_M(\eta)}_{=1_G})\cdot \pi_M(\zeta\xi) 
    = d_M\cdot \pi_M(\zeta\xi) 
\end{align*}
with $d_M \in K_M$.
Now it follows by $\pi_M(\zeta\xi)\in C_M$ and condition (\ref{eq: C_n is in the inside of D_n}) that $d_M\cdot \pi_M(\zeta\xi)  \in D_M$, so that we obtain $d_{M+1}= \varphi_M(c_M)= 1_G$ for the following carry-over.
Therefore, we readily compute $\pi_{M+1}(\eta\zeta\xi) = \pi_{M+1}(\zeta\xi)$ and by induction $\pi_n(\eta\zeta\xi) = \pi_n(\zeta\xi)$ for all $n>M$.
In particular, we have shown that $\pi_n(\eta\zeta\xi) \in C_n$ for all $n<N$ as well as $\pi_N(\eta\zeta\xi) \in B_N$.
This shows $\eta \zeta  \in V\xi^{-1} $ as desired and hence 
$$ (t^{(1)})^{-1}\cdot\kl [t^{(1)}]_M \cap (V\xi^{-1}) \kr   \tm (t^{(2)})^{-1}\cdot\kl [t^{(2)}]_M \cap (V\xi^{-1}) \kr.$$
The other inclusion follows by symmetry, finishing the proof.
\end{proof}

It remains to show that the above conditions for genericity, irredundancy, self simililarity and irregularity can all be fulfilled simultaneously. This is done in the proof of the following proposition, which summarises the results of this section.  

\begin{proposition}\label{prop: W_perf}
Suppose that $G$ is an infinite countable amenable residually finite group and $\overleftarrow{G}$ is a $G$-odometer defined by a decreasing sequence $(\Gamma_n)_{n\in\mathbb{N}}$ of normal finite index subgroups with trivial intersection. 
Let $\varepsilon\in(0,1)$. Then there exists a window $W_{\mathrm{perf}}$ which is is proper, irredundant, generic, perfectly self similar and satisfies $\nu(\partial W_{\mathrm{perf}}) \geq 1-\varepsilon$.
\end{proposition}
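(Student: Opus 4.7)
The plan is to apply the construction from~\eqref{eq: definition W}, choosing the fundamental domains $(D_n)$ and the partitions $\{A_n, B_n, C_n\}$ of $D_n\cap\Gamma_{n-1}$ so that each criterion established in Section~\ref{subsect: perfectly self similar windows} is satisfied. Specifically, I will impose: (i) $B_n=\{1_G\}$, which yields irredundancy via Corollary~\ref{cor: suff crit irredundancy} and forces $1_G\notin C_n$, hence genericity via Lemma~\ref{genericity}; (ii) $C_n\subseteq E_n := \{g\in D_n\cap\Gamma_{n-1}: K_n\cdot g\subseteq D_n\}$, which yields perfect self similarity via Lemma~\ref{perfectly-self-similar}; and (iii) $\#C_n/\#(D_n\cap\Gamma_{n-1})\geq 1-\varepsilon/2^n$, which produces $\nu(\partial W_{\mathrm{perf}})\geq \prod_n(1-\varepsilon/2^n)\geq 1-\varepsilon$ via Lemma~\ref{lem: measure of boundary}. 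Properness will then be automatic from Lemma~\ref{lem: alternative chara W and partial W}.

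The heart of the argument lies in constructing $(D_n)$ so that (ii) and (iii) hold simultaneously. I would proceed inductively: at stage $n$, the sets $D_1,\ldots,D_{n-1}$, and hence the finite set $K_n$ (cf.\ Remark~\ref{rem: carry-over maps}), are already fixed. Using the decomposition $k=\psi_{n-1}(k)\varphi_{n-1}(k)$ with $\psi_{n-1}(k)\in D_{n-1}$ and $\varphi_{n-1}(k)\in\Gamma_{n-1}$, together with property~(D4) in the form $D_n = D_{n-1}\cdot(D_n\cap\Gamma_{n-1})$, one checks that for $g\in D_n\cap\Gamma_{n-1}$ the inclusion $\varphi_{n-1}(K_n)\cdot g\subseteq D_n\cap\Gamma_{n-1}$ already implies $K_n\cdot g\subseteq D_n$. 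This reduces (ii) and (iii) to a F\o lner-type condition on $D_n\cap\Gamma_{n-1}$, viewed as a fundamental domain of $\Gamma_{n-1}/\Gamma_n$ inside $\Gamma_{n-1}$, with respect to the finite set $\varphi_{n-1}(K_n)\subseteq\Gamma_{n-1}$.

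Since $\Gamma_{n-1}$ is amenable as a subgroup of $G$, I can apply Lemma~\ref{lem: ex. of seq. of fundamental domains} and Remark~\ref{rem: D_n Folner if G amenable} inside $\Gamma_{n-1}$---after passing to a sufficiently deep term of the original chain, which only conjugates the odometer by \cite[Lemma 2]{CoPe08}---to find $\Gamma_n$ and a fundamental domain $F_n:=D_n\cap\Gamma_{n-1}$ of $\Gamma_{n-1}/\Gamma_n$ that contains $1_G$, is F\o lner with respect to $\varphi_{n-1}(K_n)$, and is large enough to ensure $\#E_n/\#F_n\geq 1-\varepsilon/2^{n+1}$ and $\#F_n\geq 2^{n+2}/\varepsilon$. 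Setting $D_n=D_{n-1}\cdot F_n$ yields fundamental domains satisfying (D1)--(D4). The partitions are then defined by $B_n=\{1_G\}$, $A_n=(F_n\setminus E_n)\cup\{a_n\}$ for an arbitrary $a_n\in E_n\setminus\{1_G\}$, and $C_n=E_n\setminus\{1_G,a_n\}$. A short calculation gives $\#(A_n\cup B_n)\leq \#(F_n\setminus E_n)+2\leq \varepsilon\cdot\#F_n/2^n$, so (iii) holds, while (i) and (ii) are satisfied by construction.

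The main obstacle is the simultaneous fulfilment of the self-similarity constraint $K_n\cdot C_n\subseteq D_n$ and the largeness of $C_n$ relative to $F_n$, as these pull in opposite directions. Overcoming this hinges on the reduction to a F\o lner condition inside the subgroup $\Gamma_{n-1}$ and with respect to the finite set $\varphi_{n-1}(K_n)$, which is made possible precisely by the amenability of $\Gamma_{n-1}$ (inherited from $G$) combined with the freedom to pass to a subsequence of $(\Gamma_m)$ at each stage of the induction.
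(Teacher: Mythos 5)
Your proposal is correct in substance and assembles the proof from exactly the same ingredients as the paper: properness from Lemma~\ref{lem: alternative chara W and partial W}, genericity from Lemma~\ref{genericity} via $1_G\notin C_n$, irredundancy from Corollary~\ref{cor: suff crit irredundancy} via $\#B_n=1$, perfect self similarity from Lemma~\ref{perfectly-self-similar} via $K_n\cdot C_n\subseteq D_n$, and the measure bound from Lemma~\ref{lem: measure of boundary} with a product of the form $\prod_n(1-\varepsilon/2^n)\geq 1-\varepsilon$. Where you genuinely diverge is in the quantitative step that makes $K_n\cdot C_n\subseteq D_n$ compatible with $\#C_n/\#(D_n\cap\Gamma_{n-1})$ being close to $1$. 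The paper takes the fundamental domains $(D_n)$ to be a left F\o lner sequence in $G$ (Remark~\ref{rem: D_n Folner if G amenable}), rewrites the constraint as $C_n\subseteq (D_n\cap\Gamma_{n-1})\setminus\partial_{K_n^{-1}}(D_n)$ using the Van Hove boundary, and passes to a subsequence so that $(\#\partial_{K_n^{-1}}(D_n)+a_n)\cdot\#D_{n-1}/\#D_n$ is small --- note the extra factor $\#D_{n-1}$, needed because $C_n$ sits inside $D_n\cap\Gamma_{n-1}$ rather than $D_n$. You instead build $D_n=D_{n-1}\cdot F_n$ inductively and impose a \emph{relative} F\o lner condition on $F_n$ inside the subgroup $\Gamma_{n-1}$ with respect to $\varphi_{n-1}(K_n)$, using the reduction $\varphi_{n-1}(K_n)\cdot g\subseteq F_n\Rightarrow K_n\cdot g\subseteq D_n$, which is indeed valid by (D4) and the uniqueness of the decomposition $k=\psi_{n-1}(k)\varphi_{n-1}(k)$. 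Your route is more self-contained (it does not invoke the global F\o lner property of $(D_n)$ and makes the ``pulling in opposite directions'' tension explicit), at the cost of having to re-verify (D1)--(D4), and in particular (D3), for the domains you construct. Two small points to patch: if $1_G\notin E_n$ then your $A_n=(F_n\setminus E_n)\cup\{a_n\}$ and $B_n=\{1_G\}$ overlap, so you should remove $1_G$ from $F_n\setminus E_n$ before placing it in $A_n$ (or arrange $\varphi_{n-1}(K_n)\subseteq F_n$ so that $1_G\in E_n$); and you should note that $C_n=E_n\setminus\{1_G,a_n\}$ is non-empty, which follows from $\#F_n$ being large and $E_n$ occupying most of $F_n$.
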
 

\begin{remark}
    This immediately implies that there exists an irregular Toeplitz flow $\left(\overline{O_G(x)},G\right)$ with maximal equicontinuous factor $\overleftarrow{G}$ such that almost all fibres over the odometer have cardinality two, and there are no fibres of larger cardinality. 

    In order to see this, we simply let $x=x_W$ and apply Corollary~\ref{cor: number of fibers if self similar} with $k=1$ to see that the maximal cardinality of a fibre is $2$. Proposition \ref{prop: regular Toeplitz flows} then yields that almost every fibre has cardinality $2$. Note here that the set $\mathcal{T}$ of injectivity points in $\overline{O_G(x)}$ is $G$-invariant, and since the Toeplitz flow is irregular it must have measure~$0$. 
\end{remark}

\begin{proof}[Proof of Proposition~\ref{prop: W_perf}] Recall that given finite sets $A,K\subseteq G$, the (left) $K$-boundary (Van Hove boundary) $\partial_K(A)$ of $A$ with respect to $K$ is defined as $$ \partial_K(A) = \{g\in G: K^{-1}\cdot g \text{ intersects both } A \text{ and } G\setminus A\}.$$
Since $(D_n)_{n\in\N}$ is a left F\o lner sequence and $G$ is discrete, we have 
\begin{equation} \label{eq: Foelner property} \lim_{n\to\infty} \frac{\# \partial_K(D_n)}{\# D_n} = 0 
\end{equation}
for all finite sets $K$. Further, observe that if $C_n\subseteq D_n$ and $1_G\in K_n$, condition~\eqref{eq: C_n is in the inside of D_n} is equivalent to $C_n \tm G\setminus \partial_{K_n^{-1}}(D_n)$.
\smallskip

Now, fix some sequence $(a_n)_{n\in\N}$ of integers $a_n\geq 2$. As discussed in Remark~\ref{rem: carry-over maps}, 
the set $K_n=d_n(G\times G)$ only depends on the sets $D_1,\ldots,D_{n-1}$. Therefore, using \eqref{eq: Foelner property} and going over to a subsequence if necessary, we can assume that the quantities $\# \partial_{K_n^{-1}}(D_n)/{\# D_n}$ and hence also 
$$
\frac{\#\partial_{K_n^{-1}}(D_n)+a_n}{\#(D_n\cap \Gamma_{n-1})} \ = \ \frac{(\# \partial_{K_n^{-1}}(D_n)+a_n) \cdot \# D_{n-1}}{\# D_n}
$$ are arbitrarily small. In particular, given any $\delta>0$, we can assume that 
$$\frac{\#\partial_{K_n^{-1}}(D_n)+a_n}{\#(D_n\cap \Gamma_{n-1})} \ < \ 1- \exp\kl{-\frac{\delta}{2^n}}\kr.$$
Therefore, we can choose the partition $\{A_n,B_n,C_n\}$ of $D_n\cap \Gamma_{n-1}$ such that 
\begin{eqnarray}
   & C_n  \tm  (D_n\cap \Gamma_{n-1})\setminus \partial_{K_n^{-1}}(D_n), \label{eq: boundary condition satisfied}\\
&\# \kl\kl (D_n\cap \Gamma_{n-1})\setminus \partial_{K_n^{-1}}(D_n)\kr \setminus C_n \kr= a_n, \label{eq: boundary condition estimate}
\end{eqnarray}
and at the same time $\#B_n=1$ and $\#A_n \geq a_n-1\geq 1$ (where we use $a_n\geq 2$ in the case that $\partial_{K_n^{-1}}(D_n)=\emptyset$) and $1_G\notin C_n$. 

Now, as mentioned above, \eqref{eq: boundary condition satisfied} implies that $C_n$ satisfies \eqref{eq: C_n is in the inside of D_n}. At the same time, as $\# B_n=1$, the window $W$ is irredundant by Corollary~\ref{cor: suff crit irredundancy}. Therefore, we can apply Lemma~\ref{perfectly-self-similar} to see that $W$ is perfectly self-similar. Moreover, properness and genericity follow from Lemmas \ref{lem: alternative chara W and partial W} and \ref{genericity}, respectively.  It remains to prove the lower bound on $\nu(\partial W)$. To that end, note that
$$
\frac{\# C_n}{\# (D_n\cap \Gamma_{n-1})} 
\ \geq \ \frac{\# (D_n\cap \Gamma_{n-1}) -(\# \partial_{K_n^{-1}}(D_n) +a_n)}{\# D_n\cap \Gamma_{n-1}} \ \geq \ \exp\kl{-\frac{\delta}{2^n}}\kr
$$
by \eqref{eq: boundary condition estimate}. Choosing $\delta = -\log(1-\varepsilon)$, Lemma~\ref{lem: measure of boundary} now yields $\nu(\partial W) \geq \exp({-\delta})=1-
\varepsilon$ as required.
\end{proof}

\begin{remark}
    \label{rem: number of retained cylinders}
    For further use in the next subsection, the following observation is important. By $\mathcal{C}_n$, we denote the family of cylinder sets of level $n$. In the above proof, we can choose the sequence $(a_n)_{n\in\N}$ such that $a_n\geq 3$ for all $n\in\mathbb{N}$. This will entail that $\# A_n\geq 2$ for all $n\in\N$. By construction, it then follows that for all $C\in \mathcal{C}_n$ with $C\cap \partial W_{\mathrm{perf}}\neq \emptyset$ there exist at least two different cylinder sets $C^{(1)}$ and $C^{(2)}$ in $\mathcal{C}_{n+1}$ such that $C^{(1)}\uplus C^{(2)}\tm C \cap X_{n+1}$. 

    This fact will play a crucial role in the construction of the $k$-self similar windows $W^{(k)}$ below as modifications of $W_\mathrm{perf}$. 
\end{remark}

\begin{remark} 
We remark that despite the fact that the employed methods are quite different, the Toeplitz arrays obtained from the construction in this section can be seen as a generalisation of  well-known examples of Toeplitz sequences presented by Williams for the case $G=\Z$ in \cite[Section 4]{Wi84}. 
\end{remark}

\subsection{Construction of $k$-self similar windows with strictly ordered similarity classes} \label{subsect: k-self similar windows}
We now aim to modify the window $W_\mathrm{perf}$ obtained in the last section in order to produce $k$-self similar windows $W^{(k)}$ with strictly ordered similarity classes. To that end, for the whole section, we fix $\epsilon\in (0,1)$ and let $W_{\mathrm{perf}}\tm \overleftarrow{G}$ be the window given by Proposition~\ref{prop: W_perf}. Recall that we have $\nu(\partial W_\mathrm{perf})>0$ and $\# B_n = 1$ for all $n\in\N$. An important point of the construction is that we will keep the boundary of $W_\mathrm{perf}$, that is, we will obtain $\partial W^{(k)}=\partial W_\mathrm{perf}$. \medskip

Since $\nu(\partial W_\mathrm{perf})>0$, there exist $L\in\mathbb{N}$ and a clopen partition $\{H_1,\ldots, H_k\}$ of $\overleftarrow{G}$ such that  $H_j$ is a union of cylinder sets of level $L$ and $\nu(\partial W_\mathrm{perf} \cap H_j) >0$, for all $j=1,\ldots k$. In addition, we also choose some partition $\N = N_1\uplus \ldots \uplus N_k$ of the integers such that $\# N_j = \infty$ for all $j$ and $\set{1,\ldots, L}\subseteq N_1$. 
For convenience, we let $N_{[i,j]} = N_i\cup\ldots\cup N_j$, where $1\leq i\leq j\leq k$.

The following statement will allow us to modify the window $W_\mathrm{perf}$, by removing certain collections of cylinder sets from its interior, while still keeping the same boundary. It will also be instrumental in Section~\ref{sect: infinite max rank}. Recall that $\mathcal{C}_n$ denotes the collection of all cylinder sets of level $n$. 

\begin{lemma}\label{lem: modified window boundary} 
    Suppose that $(E_n)_{n\in\N}$ is a sequence of cylinder sets such that $E_n \in \mathcal{C}_n$ and $E_n\tm X_n$, where $X_n$ is defined as in Section~\ref{subsect: perfectly self similar windows}.
    Let $\widetilde{X}_n^{(0)} = X_n$ and $\widetilde{X}_n^{(1)}=X_n \setminus E_n$.
    Then, for any sequence $(s_n)_{n\in\N} \in \{0,1\}^\N$ and any infinite set $\mathcal{N}\tm \N$, we have 
    $$ \partial \kl \bigcup_{n\in \mathcal{N}} (H_j \cap {X}_n^{(s_n)})\kr = \partial W_{\mathrm{perf}} \cap H_j \quad \text{for all } 1\leq j\leq k.$$
    In particular, the window
    $$ \widetilde{W} := \partial W_{\mathrm{perf}} \cup \bigcup_{j=1}^k \bigcup_{n\in N_{[1,j]}} (H_j\cap {X}_n^{(s_n)})$$
    is proper with $\partial(\widetilde{W}\cap H_j) = \partial W_{\mathrm{perf}}\cap H_j$ for all $j=1,\ldots, k$ and $\partial\widetilde{W} = \partial W_{\mathrm{perf}}$.
\end{lemma}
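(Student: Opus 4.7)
The plan is to first establish the boundary identity $\partial\bigl(\bigcup_{n\in \mathcal{N}}(H_j \cap \widetilde{X}_n^{(s_n)})\bigr) = \partial W_{\mathrm{perf}}\cap H_j$ for each $j$, and then derive the remaining assertions about $\widetilde{W}$ from it. The argument rests on three structural features of the construction in Section~\ref{subsect: perfectly self similar windows}: (i) the $X_n$ are pairwise disjoint open sets whose union equals $U=\mathrm{int}(W_{\mathrm{perf}})$, while $\partial W_{\mathrm{perf}} = \overleftarrow{G}\setminus (U\cup \mathcal{V})$, where the open set $\mathcal{V}:=\bigcup_n Y_n$ is disjoint from $\widetilde{W}$; (ii) $\mathcal{V}$ is dense in $\partial W_{\mathrm{perf}}$, as follows directly from the proof of Lemma~\ref{lem: alternative chara W and partial W} (every cylinder $[\xi]_n$ around a point $\xi\in\partial W_{\mathrm{perf}}$ contains cylinders inside $Y_{n+1}$); and (iii) Remark~\ref{rem: number of retained cylinders}, which guarantees that every level-$n$ cylinder meeting $\partial W_{\mathrm{perf}}$ contains at least two distinct level-$(n+1)$ cylinders inside $X_{n+1}$.

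Set $A := \bigcup_{n\in \mathcal{N}}(H_j\cap \widetilde{X}_n^{(s_n)})$, which is an open subset of the clopen set $H_j$. For $\partial A\tm \partial W_{\mathrm{perf}}\cap H_j$, take $\xi\in\partial A\tm \overline{A}\tm H_j\cap W_{\mathrm{perf}}$; it suffices to rule out $\xi\in U$. In that case $\xi\in X_m$ for a unique $m$, and a short case analysis yields a contradiction. If $m\in\mathcal{N}$ with $s_m=0$, or $m\in\mathcal{N}$ with $s_m=1$ and $\xi\notin E_m$, then $\xi$ lies in the open set $A$, contradicting $\xi\in\partial A$. Otherwise, either $m\notin\mathcal{N}$ (and $X_m$ is an open neighborhood of $\xi$) or $m\in\mathcal{N}$, $s_m=1$ and $\xi\in E_m$ (and $E_m$ is an open neighborhood of $\xi$). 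In both cases, the chosen neighborhood is disjoint from $A$ by the pairwise disjointness of the $X_n$, contradicting $\xi\in\overline{A}$. For the reverse inclusion, take $\xi\in \partial W_{\mathrm{perf}}\cap H_j$ and an arbitrary cylinder $[\xi]_n\tm H_j$. Choosing $m\in \mathcal{N}$ with $m>n$, the cylinder $[\xi]_{m-1}\tm Z_{m-1}$ meets $\partial W_{\mathrm{perf}}$, so by (iii) there exist two distinct level-$m$ cylinders inside $[\xi]_{m-1}\cap X_m$. At most one of them coincides with the single cylinder $E_m$, so the other lies in $\widetilde{X}_m^{(s_m)}\cap[\xi]_n$, producing the required point of $A\cap[\xi]_n$.

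For the assertions about $\widetilde{W}$, set $A_j := \bigcup_{n\in N_{[1,j]}}(H_j\cap\widetilde{X}_n^{(s_n)})$. The pairwise disjointness of the $H_j$ yields $\widetilde{W}\cap H_j = (\partial W_{\mathrm{perf}}\cap H_j)\cup A_j$, and since $A_j$ is open with $\partial A_j = \partial W_{\mathrm{perf}}\cap H_j$ by the first step, this equals $\overline{A_j}$. Hence $\widetilde{W}=\bigcup_{j=1}^k \overline{A_j}$ is closed as a finite union of closed sets. For the interior, $\bigcup_j A_j\tm\mathrm{int}(\widetilde{W})$ is clear, while any $\xi\in\partial W_{\mathrm{perf}}$ is approximated by points of $\mathcal{V}$ by (ii), and $\mathcal{V}$ is disjoint from $\widetilde{W}$, so $\xi\notin\mathrm{int}(\widetilde{W})$. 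Therefore $\mathrm{int}(\widetilde{W})=\bigcup_j A_j$, and the finiteness of the union gives $\overline{\mathrm{int}(\widetilde{W})}=\bigcup_j\overline{A_j}=\widetilde{W}$, proving properness. Finally, $\partial\widetilde{W}=\widetilde{W}\setminus\mathrm{int}(\widetilde{W})=\partial W_{\mathrm{perf}}$, and $\partial(\widetilde{W}\cap H_j)=\partial\overline{A_j}=\partial A_j=\partial W_{\mathrm{perf}}\cap H_j$, where the middle equality uses density of $\mathcal{V}$ in $\partial W_{\mathrm{perf}}$ to see that points of $\partial A_j$ are not interior to $\overline{A_j}$.

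The main obstacle I foresee is the lower bound $\partial W_{\mathrm{perf}}\cap H_j\tm\partial A$ in the first step: removing a single level-$m$ cylinder $E_m$ from $X_m$ could in principle prevent $A$ from approaching certain points of $\partial W_{\mathrm{perf}}$ from within $X_m$, and Remark~\ref{rem: number of retained cylinders} is precisely what ensures this does not happen. A secondary point of care is the density of $\mathcal{V}$ in $\partial W_{\mathrm{perf}}$, which is only implicit in the proof of Lemma~\ref{lem: alternative chara W and partial W} but underpins both the interior computation for $\widetilde{W}$ and the final boundary identities.
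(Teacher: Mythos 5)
Your proof is correct and follows essentially the same route as the paper's: the lower bound $\partial W_{\mathrm{perf}}\cap H_j\subseteq \partial(\cdots)$ rests in both cases on the fact that at least two level-$(n+1)$ cylinders of $X_{n+1}$ survive inside every cylinder meeting $\partial W_{\mathrm{perf}}$ (you cite Remark~\ref{rem: number of retained cylinders}; the paper re-derives this on the spot via the expansion coefficients and $\#A_n\geq 2$), while the upper bound is the same case analysis on which $X_m$ contains the point. Your treatment of the ``in particular'' part is more explicit than the paper's one-line deduction, but the content is identical.
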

\begin{proof}
    In order to prove the first claim, we let $W_{\mathcal{N},j}:= \bigcup_{n\in \mathcal{N}}(H_j \cap \widetilde{X}_n^{(s_n)})$. Further, for every $n\in\N$ we choose $e_n\in\overleftarrow{G}$ such that $E_n = [e_n]_n$.
        Let $\xi \in \partial W_{\mathrm{perf}}\cap H_j$, so that $\xi\in Z_n$ for every $n\in\N$ by Lemma~\ref{lem: alternative chara W and partial W}.
    Since $H_j$ is open and $Z_n$ is a union of cylinders of level $n$, we can find $m\in\N$ such that $[\xi]_n\tm H_j\cap Z_n$ for all $n\geq m$. 
    
    Fix $n\geq m$. As $\mathcal{N}\tm \N$ is infinite, there exists $\widetilde{n}\geq n$ such that $\widetilde{n}+1 \in \mathcal{N}$.
    We choose some $a_{\widetilde{n}+1}\in A_{\widetilde{n}+1}\setminus\{\pi_{\widetilde{n}+1}(e_{\widetilde{n}+1})\}$, which is possible due to $\#A_{\widetilde{n}+1}\geq 2$.
    If we define $g = \kl\prod_{j=1}^{\widetilde{n}}\pi_j(\xi)\kr\cdot a_{\widetilde{n}+1}$, then $\tau(g) \in [\xi]_n\cap(\widetilde{X}_{\widetilde{n}+1}^{(s_{\widetilde{n}+1})}\cap H_j)$ by construction.
    As $n\geq m$ was arbitrary, this shows $\xi\in \ol{W_{\mathcal{N},j}}$.
    Together with $\xi\in \ol{\overleftarrow{G}\setminus W_{\mathrm{perf}}} \tm \ol{\overleftarrow{G}\setminus W_{\mathcal{N},j}}$, we obtain $\xi \in \partial W_{\mathcal{N},j}$.\\
    Conversely, if $\xi \notin \partial W_{\mathrm{perf}}$, then we can find $n\in\N$ minimal such that $\xi\notin Z_n$.
    In this case, it is easy to see that either $\xi\in X_n$ or $\xi \in Y_n$.
    If $\xi \in Y_n$ or $\xi \in X_n\setminus \widetilde{X}_n^{(s_n)}$, then clearly $[\xi]_n \tm \overleftarrow{G}\setminus W_{\mathcal{N},j}$, so that $\xi \notin \partial W_{\mathcal{N},j}$.
    If $\xi \in \widetilde{X}_n^{(s_n)}$, then obviously $\xi \in W_{\mathcal{N},j} = \mathrm{int}(W_{\mathcal{N},j})$, so that $\xi\notin \partial W_{\mathcal{N},j}$ in this case, as well.
    This shows the first claim.\\
    The fact that $\partial(\widetilde{W}\cap H_j) = \partial W_{\mathrm{perf}} \cap H_j$ is now obtained easily from the above with $\mathcal{N} = N_{[1,j]}$.
    Furthermore, $\partial\widetilde{W}=\partial W_{\mathrm{perf}}$ follows now because the $H_j$ form a clopen partition of $\overleftarrow{G}$.
    This also shows properness of $\widetilde{W}$ and thus completes the proof.
\end{proof}




We define the modified windows $W^{(k)}$ as 
$$ W^{(k)}=\partial W_{\mathrm{perf}}\cup \bigcup_{j=1}^k\bigcup_{n\in N_{[1,j]}}(H_j\cap X_n) = W_{\mathrm{perf}} \setminus \bigcup_{j=1}^k \bigcup_{n\in N_{[j+1,k]}} (H_j\cap X_n).  $$
Note that this corresponds to the above construction of $\widetilde{W}$ with $s_n=0$ for all $n\in\N$ (the case of non-constant $(s_n)_{n\in\N}$ only being needed in Section~\ref{sect: infinite max rank}). Further, observe that cylinder sets of level $n$ with $n\in N_1$ are ``never removed'' from the interior of $W_\mathrm{perf}$ by this construction, and that $H_k \cap W^{(k)} = H_k \cap W_{\mathrm{perf}}$ (since $N_{[1,k]}=\N)$.
Lemma~\ref{lem: modified window boundary}  now immediately yields 
\begin{corollary} \label{cor: W^(k) boundary}
The window $W^{(k)}$ is proper with $\partial W^{(k)} = \partial W_{\mathrm{perf}}$.
\end{corollary}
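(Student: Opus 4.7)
The plan is to observe that $W^{(k)}$ is precisely an instance of the window $\widetilde{W}$ constructed in Lemma~\ref{lem: modified window boundary}, obtained by choosing the constant sequence $s_n = 0$ for every $n\in\mathbb{N}$. Under this choice we have $\widetilde{X}_n^{(s_n)} = \widetilde{X}_n^{(0)} = X_n$, so the definition
\[
\widetilde{W} = \partial W_{\mathrm{perf}} \cup \bigcup_{j=1}^k \bigcup_{n\in N_{[1,j]}} (H_j\cap \widetilde{X}_n^{(s_n)})
\]
reduces exactly to the defining formula of $W^{(k)}$.

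Having made this identification, both conclusions of the corollary follow immediately from the corresponding conclusions of Lemma~\ref{lem: modified window boundary}: properness of $W^{(k)}$ and the identity $\partial W^{(k)} = \partial W_{\mathrm{perf}}$. No further argument is needed, since the sequence $(E_n)_{n\in\mathbb{N}}$ plays no role when $s_n\equiv 0$ (any admissible choice, e.g.\ $E_n = $ any cylinder of level $n$ contained in $X_n$ — which exists since $\# A_n \geq 2$ by the construction of $W_{\mathrm{perf}}$ — will do, but the lemma's hypotheses are trivially satisfied and the conclusion does not depend on the $E_n$).

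There is no real obstacle here; the substance of the argument has already been carried out inside Lemma~\ref{lem: modified window boundary}. The only thing worth double-checking is that the indexing $N_{[1,j]}$ in the definition of $W^{(k)}$ matches the indexing in the lemma, and that $\partial W_{\mathrm{perf}}$ is indeed a subset of $W^{(k)}$ (so that writing $W^{(k)} = W_{\mathrm{perf}} \setminus \bigcup_{j=1}^k \bigcup_{n\in N_{[j+1,k]}} (H_j\cap X_n)$ is consistent), both of which are immediate from the definitions.
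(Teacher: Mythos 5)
Your proof is correct and is exactly the paper's argument: the paper likewise observes that $W^{(k)}$ is the window $\widetilde{W}$ of Lemma~\ref{lem: modified window boundary} with $s_n=0$ for all $n$, so that properness and $\partial W^{(k)}=\partial W_{\mathrm{perf}}$ follow immediately from that lemma. Your side remarks (that the choice of $E_n$ is irrelevant when $s_n\equiv 0$, and that the second expression for $W^{(k)}$ is consistent) are accurate but not needed.
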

We also note that as a consequence $W^{(k)}$ is both generic and irregular, since both holds for $W_\mathrm{perf}$ and these properties only depend on the boundary of the window. Therefore, it now remains to check that $W^{(k)}$ also has the two remaining desired properties of irredundancy and $k$-self similarity (with strictly ordered similarity classes). We start with irredundancy. 

\begin{lemma}\label{lemma: irredundant}
    The window $W^{(k)}$ is irredundant.
\end{lemma}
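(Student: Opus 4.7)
The plan is to apply the sufficient criterion of Lemma~\ref{lem: suff. crit. irredundancy}. By Corollary~\ref{cor: W^(k) boundary} we have $\partial W^{(k)}=\partial W_{\mathrm{perf}}$, and Lemma~\ref{lem: chara nonempty intersect with W and complement} then shows that a level-$n$ cylinder $S$ meets both $W^{(k)}$ and its complement precisely when $S\tm Z_n$. The task thus reduces to producing, for every $n\in\N_0$, a cylinder $S\tm Z_n$ of level $n$ such that $\#\mathcal{M}_n(W^{(k)},S)=1$.

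My strategy is to choose $S$ on which $W^{(k)}$ and $W_{\mathrm{perf}}$ actually coincide at level $n+1$, so that $\mathcal{M}_n(W^{(k)},S)=\mathcal{M}_n(W_{\mathrm{perf}},S)$, and then invoke Lemma~\ref{lem: chara nonempty intersect with W and complement} applied to $W_{\mathrm{perf}}$ (whose defining data satisfies $\#B_{n+1}=1$) to conclude $\#\mathcal{M}_n(W_{\mathrm{perf}},S)=1$. I would split into two cases. If $n+1\leq L$, the setup condition $\{1,\dots,L\}\tm N_1$ forces $n+1\in N_{[1,j]}$ for every $j=1,\dots,k$, so by the very definition
$$W^{(k)}=\partial W_{\mathrm{perf}}\cup\bigcup_{j=1}^{k}\bigcup_{n\in N_{[1,j]}}(H_j\cap X_n)$$
no level-$(n+1)$ cylinder of $X_{n+1}$ is removed from $W_{\mathrm{perf}}$; hence any $S\tm Z_n$ works. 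If instead $n+1>L$, then since $\nu(\partial W_{\mathrm{perf}}\cap H_k)>0$ and $H_k$ is a union of level-$L$ cylinders, I can find a level-$n$ cylinder $S\tm H_k\cap Z_n$. Because $N_{[k+1,k]}=\emptyset$, the construction of $W^{(k)}$ never removes anything from $H_k$, so $H_k\cap W^{(k)}=H_k\cap W_{\mathrm{perf}}$ and in particular $W^{(k)}$ and $W_{\mathrm{perf}}$ agree on all sub-cylinders of $S$.

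The only real subtlety is the small-$n$ case $n<L$, since there a level-$n$ cylinder generally straddles several of the $H_j$ and the argument via $H_k$ cannot be applied directly. The built-in requirement $\{1,\dots,L\}\tm N_1$ in the construction of $W^{(k)}$ is precisely what rescues this case, ensuring that no modification of $W_{\mathrm{perf}}$ occurs at levels $\leq L$, so that the irredundancy of $W_{\mathrm{perf}}$ transfers verbatim through Lemma~\ref{lem: suff. crit. irredundancy}.
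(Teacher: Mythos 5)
Your proof is correct and follows essentially the same route as the paper: both reduce to the sufficient criterion of Lemma~\ref{lem: suff. crit. irredundancy} and split into the case $n+1\leq L$ (where $\{1,\dots,L\}\tm N_1$ guarantees that no level-$(n+1)$ cylinder is removed, so $\mathcal{M}_n(W^{(k)},S)=\mathcal{M}_n(W_{\mathrm{perf}},S)$) and the case $n\geq L$ (where one takes $S\tm H_k\cap Z_n$ and uses $H_k\cap W^{(k)}=H_k\cap W_{\mathrm{perf}}$). If anything, your case split $n\leq L-1$ versus $n\geq L$ is slightly tidier than the paper's, which literally states the first case as $n<L-1$ and so nominally omits $n=L-1$, although the same argument covers it.
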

\begin{proof}
    In the light of Lemma~\ref{lem: suff. crit. irredundancy}, we need to show that for each $n\in\N$ there exists $C\in\cC_{n}$ such that $C\cap W^{(k)}\neq \emptyset \neq C\cap (\overleftarrow{G}\setminus W^{(k)})$ and $\#\mathcal{M}_n(W^{(k)},C)=1$. Due to Lemma~\ref{lem: chara nonempty intersect with W and complement} and the construction of $W_\mathrm{perf}$ (that is, the fact that $\# B_n=1$), we know that $\#\mathcal{M}_n(W_\mathrm{perf},C)=1$ holds for all $C\in\mathcal{C}_n$ with $C\cap \partial W_\mathrm{perf}\neq \emptyset$. We now distinguish two cases. 

    First, suppose that $n<L-1$ and choose some cylinder set $C\in \cC_n$ that intersects $\partial W_\mathrm{perf}$. Since $\partial W_\mathrm{perf}=\partial W^{(k)}$, this means that $C$ intersects both $W^{(k)}$ and its complement. At the same time, $C$ contains a unique cylinder set $C'$ of level $n+1$ that does not intersect $W_\mathrm{perf}$. Suppose $S$ is any other cylinder set of level $n+1$ contained in  $C$. Then either $S\subseteq W_\mathrm{perf}$, or $S\cap \partial W_\mathrm{perf}\neq \emptyset$. In the first case, we also have $S\subseteq W^{(k)}$, since no cylinder sets of level $n+1\leq L$ were removed from $W_\mathrm{perf}$ in the construction. In the second case, $S$ intersects $\partial W^{(k)}$. Hence, in both cases, $S$ cannot be contained in the complement of $W^{(k)}$ and we obtain $\# \cM_n(W^{(k)},C)=1$. 

    Secondly, suppose that $n\geq L$. In this case, we can choose a cylinder set $C\in\cC_n$ such that $C\cap\partial W_\mathrm{perf}\neq \emptyset$ and at the same time $C\subseteq H_k$.
    Then we have $\# \cM_n(W_\mathrm{perf}, C)=1$, but since $W^{(k)}\cap H_k = W_\mathrm{perf}\cap H_k$, this also means $\#\cM(W^{(k)},C)=1$. 

    Altogether, this shows that the assumptions of Lemma~\ref{lem: suff. crit. irredundancy} are satisfied and we obtain the irredundancy of $W^{(k)}$. 
\end{proof}

Now we turn our attention to $k$-self similarity and the ordering of similarity classes. In this context, a first observation that will be useful is the fact that the perfect self similarity of $W_\mathrm{perf}$ also holds for the sets $X_n$ that appear as intermediates in the construction of that window. The precise statement reads as follows. 

\begin{lemma}\label{lemma: nested-union}
Let $\xi_1, \xi_2\in \partial W_{\mathrm{perf}} = \partial W^{(k)}$ and $m\in\mathbb{N}$ be such that 
$$
\xi_1^{-1}\cdot\left([\xi_1]_m\cap W_{\mathrm{perf}} \right)=\xi_2^{-1}\cdot\left([\xi_2]_m\cap W_{\mathrm{perf}} \right)
$$
Then $$\xi_1^{-1}\cdot\left([\xi_1]_m\cap X_n \right)=\xi_2^{-1}\cdot\left([\xi_2]_m\cap X_n \right)$$ holds for every $n\in\mathbb{N}$.
\end{lemma}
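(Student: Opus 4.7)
The plan is to prove the claim by giving an intrinsic description of each $X_n$ purely in terms of $U = \mathrm{int}(W_{\mathrm{perf}})$, and then exploit the hypothesis after translation by $\xi_i^{-1}$. The trivial range $n \leq m$ is disposed of immediately: since $\xi_i \in \partial W_{\mathrm{perf}} = \bigcap_k Z_k$ by Lemma~\ref{lem: alternative chara W and partial W}, the cylinder $[\xi_i]_m$ is contained in $Z_m$; by the nested-disjoint structure of the construction, $Z_m \subseteq Z_{n-1}$ and $Z_n \cap X_n = \emptyset$, so $[\xi_i]_m \cap X_n = \emptyset$ and both sides are empty.

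For $n > m$, the central step is to establish, for every $\eta \in \overleftarrow{G}$,
\begin{equation*}
\eta \in X_n \iff [\eta]_n \subseteq U \ \text{ and }\ [\eta]_{n-1} \not\subseteq U,
\end{equation*}
where we set $[\eta]_0 = \overleftarrow{G}$. The forward direction uses that $X_n$ is a union of level-$n$ cylinders inside $Z_{n-1}$, while each level-$(n-1)$ cylinder in $Z_{n-1}$ splits (according to the partition $A_n \sqcup B_n \sqcup C_n$ of $D_n \cap \Gamma_{n-1}$, with $B_n \neq \emptyset$) into pieces in $X_n$, $Y_n$, and $Z_n$, hence contains a piece of $V = \bigcup_k Y_k$ that is disjoint from $U$. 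For the converse, if $\eta \in U$ then $\eta \in X_k$ for a unique $k$; the case $k<n$ would force $[\eta]_{n-1} \subseteq [\eta]_k \subseteq X_k \subseteq U$, while the case $k>n$ would give $\eta \in Z_n$, and then $[\eta]_n$ contains a level-$(n+1)$ cylinder in $Y_{n+1}$ by the same partition argument (using $B_{n+1}\neq \emptyset$), so $[\eta]_n \not\subseteq U$.

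The conclusion will then follow by translation. Bi-invariance of the metric on $\overleftarrow{G}$ gives $\xi_i \cdot [\zeta]_k = [\xi_i\zeta]_k$, so taking interiors of both sides of the hypothesis within $[1_{\overleftarrow{G}}]_m = \xi_i^{-1}[\xi_i]_m$ yields
\begin{equation*}
S \ := \ \xi_1^{-1}([\xi_1]_m \cap U) \ = \ \xi_2^{-1}([\xi_2]_m \cap U) \ \subseteq\ [1_{\overleftarrow{G}}]_m.
\end{equation*}
For $n > m$ and $\zeta \in [1_{\overleftarrow{G}}]_m$, both $[\zeta]_n$ and $[\zeta]_{n-1}$ sit inside $[1_{\overleftarrow{G}}]_m$, so the intrinsic criterion reads $\xi_i\zeta \in X_n \iff [\zeta]_n \subseteq S$ and $[\zeta]_{n-1} \not\subseteq S$, with the right-hand side independent of $i$, giving the desired equality. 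I expect the main obstacle to be the converse direction of the intrinsic characterization: it requires simultaneously invoking the nested-disjoint structure $X_k \subseteq Z_{k-1}$, $X_k \cap Z_k = \emptyset$ of the construction and the non-emptiness of each $B_n$ (guaranteed by Proposition~\ref{prop: W_perf}), which together prevent any $Z_n$-cylinder from being entirely swallowed by $U$. Everything else is essentially bookkeeping with the bi-invariant metric.
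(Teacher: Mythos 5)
Your proof is correct, but it takes a genuinely different route from the paper's. The paper's argument rests on the observation that a level-$n$ cylinder $C$ satisfies $C\subseteq W_{\mathrm{perf}}$ iff $C\subseteq X_t$ for some $t\leq n$; it then takes a level-$n$ cylinder $C$ of $[\xi_1]_m\cap X_n$, transports it into $[\xi_2]_m$ where it must land inside some $X_t$ with $m<t\leq n$, transports the ambient level-$t$ cylinder back so that it lands inside some $X_{t'}$ with $t'\leq t$, and concludes $t'=t=n$ from the disjointness of the $X_j$. You instead establish the intrinsic pointwise characterization ``$\eta\in X_n$ iff $[\eta]_n\subseteq U$ and $[\eta]_{n-1}\not\subseteq U$'', reduce the hypothesis to the equality $\xi_1^{-1}([\xi_1]_m\cap U)=\xi_2^{-1}([\xi_2]_m\cap U)$ by taking interiors (legitimate, since the cylinders are open and translations are homeomorphisms), and then transport in a single step using bi-invariance of the metric. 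Both arguments use the same structural inputs: the partition $X_n\uplus Y_n\uplus Z_n$ of $Z_{n-1}$, the pairwise disjointness of the $X_j$, and $B_n\neq\emptyset$. What your version buys is a clean, reusable local invariant --- the first level at which the cylinder around a point of $U$ is swallowed by $U$ determines which $X_n$ it lies in --- after which the transport is mechanical; the paper's version avoids passing to interiors and works directly with the closed window. Two trivial remarks: in your $n\leq m$ case the inclusion you actually need is $Z_m\subseteq Z_n$ (not $Z_m\subseteq Z_{n-1}$), which together with $Z_n\cap X_n=\emptyset$ gives $[\xi_i]_m\cap X_n=\emptyset$ as claimed; and the paper disposes of this case instead by noting that $[\xi_i]_m\subseteq X_n$ would force $\xi_i\in\mathrm{int}(W_{\mathrm{perf}})$, contradicting $\xi_i\in\partial W_{\mathrm{perf}}$. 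Neither affects correctness.
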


\begin{proof}
First, observe that from the construction of $W_{\mathrm{perf}}$, it follows that for $C\in \mathcal{C}_n$,  
\begin{equation}\label{eq: C in W iff C in X_n}
    C\tm W_{\mathrm{perf}} \iff C\tm X_t, \mbox{ for some } 1\leq t\leq n.
\end{equation}
If $n\leq m$, then for $i=1,2$ we either have $[\xi_i]_m \cap X_n = \emptyset$ or $[\xi_i]_m \tm X_n$.
However, the latter case would imply $\xi_i \in X_n \tm \mathrm{int}(W_{\mathrm{perf}})$, contradicting the assumption $\xi_i\in \partial W_{\mathrm{perf}}$. Thus, we have $\xi_1^{-1}\cdot\left([\xi_1]_m\cap W_{\mathrm{perf}} \right)=\emptyset =\xi_2^{-1}\cdot\left([\xi_2]_m\cap W_{\mathrm{perf}} \right)
$, so that the assertions holds in this case (with both sets being empty).

Now, assume that $n> m$, so that $[\xi_i]_m\cap X_n$ is a (disjoint) union of cylinders of level $n$.
Let $C$ be one of these cylinders. 
Our hypothesis implies that $\xi_2\xi_1^{-1} C\subseteq [\xi_2]_m\cap W_{\mathrm{perf}}$, and the last set is  equal to $[\xi_2]_m\cap \bigcup_{j>m} X_j$ by the previous argument.
Applying \eqref{eq: C in W iff C in X_n}, there exists $m< t\leq n$ such that $\xi_2\xi_1^{-1} C\subseteq [\xi_2]_m\cap X_t$.
Let $C'\in \mathcal{C}_t$ be such that $\xi_2\xi_1^{-1} C\subseteq C' \subseteq [\xi_2]_m\cap X_t$.
An analogous argument applying \eqref{eq: C in W iff C in X_n} implies that there exists $m< t'\leq t$ such that $C\subseteq \xi_1\xi_2^{-1} C'\subseteq [\xi_1]_m\cap X_{t'}$.
Consequently, $t'=t=n$, concluding the proof.
\end{proof}

 
The following lemma now describes the structure of the similarity classes within $\partial W^{(k)}$.

\begin{lemma}\label{self-similar-k}
    Let $\xi \in \overleftarrow{G}$, $1\leq i_1 \leq i_2 \leq k$ and suppose that $\tau(l_1)  \in (\partial W^{(k)}\cap H_{i_1})\xi^{-1}\cap \tau(G)$ and $\tau(l_2) \in (\partial W^{(k)} \cap H_{i_2})\xi^{-1} \cap \tau(G)$.
    Then, $\tau(l_1) \sim_\xi \tau(l_2)$ if and only if $i_1=i_2$ and $\tau(l_1) \prec_\xi \tau(l_2)$ if and only if $i_1<i_2$. 
\end{lemma}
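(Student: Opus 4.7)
The plan is to translate everything to the points $\xi_i := \tau(l_i)\xi \in \partial W_{\mathrm{perf}}\cap H_{i_i}$ for $i=1,2$, and then exploit the perfect self similarity of $W_{\mathrm{perf}}$ together with the explicit way in which $W^{(k)}$ was obtained from $W_{\mathrm{perf}}$. Using bi-invariance of $d$ one checks that $\tau(l_1)\preccurlyeq_\xi\tau(l_2)$ (with respect to $W^{(k)}$) is equivalent to the existence of some $m\in\N$ such that
\[
\xi_1^{-1}\bigl([\xi_1]_m\cap W^{(k)}\bigr)\ \subseteq \ \xi_2^{-1}\bigl([\xi_2]_m\cap W^{(k)}\bigr),
\]
and similarly for $\sim_\xi$ and $\prec_\xi$. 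This is the form I would work with throughout.

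First, since $\xi_1,\xi_2\in\partial W_{\mathrm{perf}}$ and $W_{\mathrm{perf}}$ is perfectly self similar (Proposition~\ref{prop: W_perf}), there is some $m_0$ with $\xi_1^{-1}([\xi_1]_{m_0}\cap W_{\mathrm{perf}})=\xi_2^{-1}([\xi_2]_{m_0}\cap W_{\mathrm{perf}})$. By Lemma~\ref{lemma: nested-union}, this equality lifts to each piece $X_n$: $\xi_1^{-1}([\xi_1]_{m_0}\cap X_n)=\xi_2^{-1}([\xi_2]_{m_0}\cap X_n)$ for every $n\in\N$. Since each $H_j$ is clopen, one can enlarge $m_0$ to an $m\geq m_0$ with $[\xi_i]_m\subseteq H_{i_i}$ for $i=1,2$. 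Using disjointness of the $H_j$ and the description $W^{(k)}=W_{\mathrm{perf}}\setminus\bigcup_{j=1}^k\bigcup_{n\in N_{[j+1,k]}}(H_j\cap X_n)$, one then obtains the crucial local formula
\[
\xi_i^{-1}\bigl([\xi_i]_m\cap W^{(k)}\bigr)\ =\ \xi_i^{-1}\bigl([\xi_i]_m\cap W_{\mathrm{perf}}\bigr)\ \setminus\ \bigcup_{n\in N_{[i_i+1,k]}}\xi_i^{-1}\bigl([\xi_i]_m\cap X_n\bigr),
\]
and all the individual sets on the right-hand side coincide for $i=1$ and $i=2$ by the previous step.

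Since $i_1\leq i_2$ implies $N_{[i_2+1,k]}\subseteq N_{[i_1+1,k]}$, the above formula gives $\xi_1^{-1}([\xi_1]_m\cap W^{(k)})\subseteq\xi_2^{-1}([\xi_2]_m\cap W^{(k)})$, i.e.\ $\tau(l_1)\preccurlyeq_\xi\tau(l_2)$. If $i_1=i_2$ both index sets agree and we get equality, hence $\tau(l_1)\sim_\xi\tau(l_2)$. If $i_1<i_2$, pick any $n\in N_{i_2}$ with $n>m$; by Remark~\ref{rem: number of retained cylinders} (which guarantees $\#A_n\geq 2$, so every cylinder of level $m$ meeting $\partial W_{\mathrm{perf}}$ still contains level-$n$ cylinders of $X_n$), the set $[\xi_2]_m\cap X_n$ contains a cylinder $C\in\mathcal{C}_n$. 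For any $c\in C$, $c\in W^{(k)}$ because $n\in N_{[1,i_2]}$ and $c\in H_{i_2}$, whereas the corresponding point $\xi_1\xi_2^{-1}c\in[\xi_1]_m\cap X_n$ is excluded from $W^{(k)}$ because $n\in N_{[i_1+1,k]}$ and $\xi_1\xi_2^{-1}c\in H_{i_1}$. This witnesses strict inclusion, giving $\tau(l_1)\prec_\xi\tau(l_2)$.

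The two equivalences follow immediately: $\sim_\xi$ forces $i_1=i_2$, since $i_1<i_2$ already yields $\prec_\xi$; conversely, $i_1=i_2$ forces $\sim_\xi$, ruling out the strict relation. The one step that needs care is the passage from the ``abstract'' relations $\preccurlyeq_\xi,\prec_\xi,\sim_\xi$ on $\partial(W^{(k)}\xi^{-1})\cap\tau(G)$ to the local ``shifted'' formulation on $\partial W_{\mathrm{perf}}$ around $\xi_1,\xi_2$; this is a routine but notationally delicate application of bi-invariance of the metric, and once it is in place the rest is essentially a direct comparison of the two cylinder sets under the index sets $N_{[i_1+1,k]}$ and $N_{[i_2+1,k]}$.
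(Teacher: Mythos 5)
Your proof is correct and follows essentially the same route as the paper's: both reduce to the perfect self similarity of $W_{\mathrm{perf}}$, lift the local equality to each $X_n$ via Lemma~\ref{lemma: nested-union}, enlarge $m\geq L$ so the cylinders sit inside $H_{i_1}$ and $H_{i_2}$, and then compare which index sets $N_{[\cdot,\cdot]}$ are retained (you use the complement form of $W^{(k)}$, the paper the union form --- the same computation). The only substantive difference is that you exhibit an explicit witness cylinder for the strict inclusion when $i_1<i_2$, which the paper merely asserts; that is a welcome bit of extra care rather than a deviation.
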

\begin{proof}
Let $l_1^*=\tau(l_1)$ and $l_2^*=\tau(l_2)$. Since $\partial W^{(k)}=\partial W_{\mathrm{perf}}$, perfect self similarity of $W_{\mathrm{perf}}$ (see Lemma~\ref{perfectly-self-similar}) implies there exists $m\in \mathbb{N}$ such that
$$
l_1^{*-1}\cdot([l^*_1]_m\cap W_{\mathrm{perf}}\xi^{-1})=l_2^{*-1}\cdot ([l^*_2]_m\cap W_{\mathrm{perf}}\xi^{-1}).
$$
Obviously, this implies $l^*_2l_1^{*-1}\cdot ([l^*_1]_m \cap \partial W_{\mathrm{perf}}\xi^{-1}) = [l^*_2]_m \cap \partial W_{\mathrm{perf}} \xi^{-1}$.
Moreover, from Lemma~\ref{lemma: nested-union}, taking $\xi_1=l^*_1\xi$ and $\xi_2=l^*_2\xi$, we deduce that 
 $$l^*_2l_1^{*-1}\cdot \left([l^*_1]_m\cap X_n\xi^{-1}\right)=[l^*_2]_m\cap X_n\xi^{-1},$$ for every $n\in \mathbb{N}$.
Note that since $l^*_s\in H_{i_s}\xi^{-1}$, we can assume $[l^*_{s}]_m\subseteq H_{i_s}\xi^{-1}$ (by choosing $m\geq L$) for $s=1,2$. Thus,
\begin{eqnarray*}
[l^*_2]_m\cap W^{(k)}\xi^{-1} & = & [l^*_2]_m\cap \left( \partial W_{\mathrm{perf}}\xi^{-1} \cup \bigcup_{j=1}^k\bigcup_{n\in N_{[1,j]}} H_j\xi^{-1}\cap X_n\xi^{-1}  \right)\\
   &=& \kl [l^*_2]_m\cap \partial W_{\mathrm{perf}} \xi^{-1}\kr\cup \bigcup_{n\in N_{[1,i_2]}}[l^*_2]_m\cap X_n\xi^{-1}\\
   &=& l^*_2l_1^{*-1}\kl ([l^*_1]_m \cap \partial W_{\mathrm{perf}}\xi^{-1}) \cup \bigcup_{n\in N_{[1,i_2]}} [l^*_1]_m \cap X_n\xi^{-1}\kr\\
   &\mt&l^*_2l_1^{*-1}\kl ([l^*_1]_m \cap \partial W_{\mathrm{perf}}\xi^{-1}) \cup \bigcup_{n\in N_{[1,i_1]}} [l^*_1]_m \cap X_n\xi^{-1}\kr\\
   &=& l^*_2l_1^{*-1}\cdot\left([l^*_1]_m\cap W^{(k)}\xi^{-1} \right)
\end{eqnarray*}
 with equality iff $i_1=i_2$ and strict inclusion iff $i_1<i_2$. This proves the assertion.  
\end{proof}

We summarize the above findings in the following
\begin{corollary} \label{cor: k similarity class structure}
    Given $\xi\in\overleftarrow{G}$ and $j\in\{1,\ldots ,k\}$, let 
    $$
    S_j^\xi = (\partial W^{(k)} \cap H_j)\xi^{-1} \cap \tau(G) . 
    $$
    Then each of the sets $S_j^\xi$ equals a (possibly empty) similarity class with respect to the relation $\sim_\xi$. Further, if $i<j$ and $S_i^\xi,S_j^\xi$ are non-empty, then $S_i^\xi\prec_\xi S_j^\xi$. 
\end{corollary}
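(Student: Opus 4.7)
The plan is to derive the corollary directly from Lemma~\ref{self-similar-k}, using the earlier identification $\partial W^{(k)}=\partial W_{\mathrm{perf}}$ from Corollary~\ref{cor: W^(k) boundary} together with the fact that $\{H_1,\ldots,H_k\}$ is a clopen partition of $\overleftarrow{G}$. First, I would observe that
\[
\partial W^{(k)}\xi^{-1}\cap\tau(G) \;=\; \bigsqcup_{j=1}^{k} S_j^{\xi},
\]
since right-multiplication by $\xi^{-1}$ is a homeomorphism and $H_j\xi^{-1}$ still partitions $\overleftarrow{G}$. Moreover, as noted in Section~\ref{sec: self similarity}, any similarity class with respect to $\xi$ is contained in $\partial(W^{(k)}\xi^{-1})\cap\tau(G)$, so the $S_j^\xi$ are the only candidates to organize the similarity structure.

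Next, I would apply Lemma~\ref{self-similar-k} to show that each non-empty $S_j^\xi$ is exactly one similarity class. Fixing $j$ and picking any $\tau(l_1),\tau(l_2)\in S_j^\xi$, the case $i_1=i_2=j$ of the lemma yields $\tau(l_1)\sim_\xi\tau(l_2)$, so $S_j^\xi$ is contained in a single $\sim_\xi$-class. Conversely, any $\tau(l)$ in that class lies in some $S_{j'}^\xi$ by the above disjoint union, and the ``only if'' direction of Lemma~\ref{self-similar-k} forces $j'=j$. Hence $S_j^\xi$ coincides with a full similarity class.

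For the ordering, suppose $i<j$ and both $S_i^\xi, S_j^\xi$ are non-empty. Choosing representatives $\tau(l_1)\in S_i^\xi$, $\tau(l_2)\in S_j^\xi$ and applying the ``$\prec_\xi$ iff $i_1<i_2$'' statement of Lemma~\ref{self-similar-k} gives $\tau(l_1)\prec_\xi\tau(l_2)$. Since the relation $\preccurlyeq_\xi$ descends to a well-defined partial order on similarity classes (as noted after Definition~\ref{similarity}), this is precisely $S_i^\xi\prec_\xi S_j^\xi$, completing the proof.

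There is no real obstacle here; the corollary is essentially a repackaging of Lemma~\ref{self-similar-k} in terms of the labels $S_j^\xi$. The only point that deserves a one-line check is that the disjoint decomposition of $\partial W^{(k)}\xi^{-1}\cap\tau(G)$ into the $S_j^\xi$ indeed exhausts all possible similarity classes, which follows at once from $\partial W^{(k)}=\partial W_{\mathrm{perf}}$ and the fact that the $H_j$ partition $\overleftarrow{G}$.
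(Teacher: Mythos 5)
Your proof is correct and follows the same route as the paper, which simply presents the corollary as a summary of Lemma~\ref{self-similar-k} without writing out the details you supply. The bookkeeping you add (the disjoint decomposition of $\partial (W^{(k)}\xi^{-1})\cap\tau(G)$ into the $S_j^\xi$ via $\partial W^{(k)}=\partial W_{\mathrm{perf}}$ and the clopen partition $\{H_j\}$, then the two directions of the lemma's equivalences) is exactly the intended argument.
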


\subsection{Proof of Theorem~\ref{thm: existence of k-1 Toeplitz arrays}.} \label{subsect: proof of main theorem}
We fix $k\geq 1$ and show in the following that the assertion of Theorem~\ref{thm: existence of k-1 Toeplitz arrays} for $k+1$ holds. \medskip

First, Co\-ro\-llary~\ref{cor: W^(k) boundary} implies that $W^{(k)}$ is proper, generic and irregular, whereas Lemma~\ref{lemma: irredundant} yields irredundancy. Consequently, Theorem~\ref{thm: Toeplitz is model set} implies that $x_{W^{(k)}}\in\{0,1\}^G$ is an irregular Toeplitz array. Further, by Proposition~\ref{prop: torus para bis 1} there exists a unique factor map $\beta: \overline{O_G(x_{W^{(k)}})}\to \overleftarrow{G}$ such that $\{x_{W^{(k)}}\}=\beta^{-1}(\{1_{\overleftarrow{G}}\})$. Due to Corollary~\ref{cor: k similarity class structure}, we can apply Lemma~\ref{lem: if l_1 in Gamma then l_2} and Proposition~\ref{prop: k-self similar with total order implies k+1 elements in fiber} to obtain the following. 
\begin{corollary}\label{cor: fiber structure for W^(k)}
 We have $\#\beta^{-1}(\{\xi\})\leq k+1$, for every $\xi\in \overleftarrow{G}$.
 Furthermore, $\# \beta^{-1}(\{\xi\}) = k+1$ if and only if $S_j^\xi := (\partial W^{(k)} \cap H_j)\xi^{-1} \cap \tau(G) \neq \emptyset$ for all $j\in \set{1,\ldots,k}$.
\end{corollary}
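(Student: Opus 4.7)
My plan is to obtain the statement as a direct consequence of the structural results for similarity classes in $\partial W^{(k)}$ combined with the fibre-counting proposition from the previous section.

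First I would set the notation: given $\xi \in \overleftarrow{G}$, let $m(\xi) := \#\{j \in \{1,\ldots,k\} : S_j^\xi \neq \emptyset\}$, and denote by $j_1 < \cdots < j_{m(\xi)}$ the indices where $S_j^\xi$ is non-empty. Since the sets $H_j$ form a partition of $\overleftarrow{G}$ (so the translated sets $H_j \xi^{-1}$ also partition $\overleftarrow{G}$), we have $\partial(W^{(k)}\xi^{-1}) \cap \tau(G) = \bigcup_{j=1}^k S_j^\xi$. By Corollary~\ref{cor: k similarity class structure}, the similarity classes of $\sim_\xi$ are precisely the non-empty $S_{j_i}^\xi$, and they are strictly ordered as $S_{j_1}^\xi \prec_\xi \cdots \prec_\xi S_{j_{m(\xi)}}^\xi$.

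Next I would split into two cases. If $m(\xi) = 0$, then $\xi$ is not critical, i.e., $\partial(W^{(k)}\xi^{-1}) \cap \tau(G) = \emptyset$. In this situation, the characterization of the fibre in Proposition~\ref{prop: torus para bis 1} forces $\#\beta^{-1}(\{\xi\}) = 1$, since for any two elements in the fibre they can only disagree on $g \in G$ with $\tau(g) \in \partial(W^{(k)}\xi^{-1})$. If $m(\xi) \geq 1$, then $\xi$ is critical and the set of similarity classes is totally ordered with exactly $m(\xi)$ elements; applying Proposition~\ref{prop: k-self similar with total order implies k+1 elements in fiber} (with $k$ replaced by $m(\xi)$) gives $\#\beta^{-1}(\{\xi\}) = m(\xi) + 1$.

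In both cases $\#\beta^{-1}(\{\xi\}) = m(\xi) + 1 \leq k+1$, which establishes the upper bound. Equality holds precisely when $m(\xi) = k$, i.e., when every $S_j^\xi$ with $j \in \{1,\ldots,k\}$ is non-empty, as claimed. I do not anticipate any real obstacle here: all the heavy lifting — the identification of similarity classes, their linear order, and the translation from ordered similarity classes to fibre cardinality — has already been carried out in Corollary~\ref{cor: k similarity class structure} and Proposition~\ref{prop: k-self similar with total order implies k+1 elements in fiber}. The only mild subtlety is to verify that the non-critical case is handled correctly, but this follows at once from the squeeze characterisation of $\beta$ in Proposition~\ref{prop: torus para bis 1}.
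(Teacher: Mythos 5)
Your proposal is correct and follows essentially the same route as the paper, which derives the corollary directly from Corollary~\ref{cor: k similarity class structure} (the non-empty $S_j^\xi$ are exactly the similarity classes and are strictly ordered) together with Proposition~\ref{prop: k-self similar with total order implies k+1 elements in fiber}. Your explicit treatment of the non-critical case $m(\xi)=0$ via the squeeze characterisation in Proposition~\ref{prop: torus para bis 1} is a detail the paper leaves implicit (it is noted just before Definition~\ref{similarity}), but it is the intended argument.
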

In particular, this settles property (1) of Theorem~\ref{thm: existence of k-1 Toeplitz arrays}.\medskip

In order to prove (2), let 
$$H_0 := \{\xi\in\overleftarrow{G}: \#\beta^{-1}(\{\xi\}) = k+1\}.
$$
Then Corollary~\ref{cor: fiber structure for W^(k)} yields $\xi \in H_0 \iff \forall j\in \{1,\ldots,k\}:\: S_j^\xi \neq \emptyset$.
Consider the action of $G$ on $\overleftarrow{G}$ given by $g\cdot \xi= \tau(g)\xi$, for every $g\in G$ and $\xi\in \overleftarrow{G}$.
This $G$-action is uniquely ergodic, where the unique ergodic measure  is the Haar measure $\nu$ on $\overleftarrow{G}$.
Hence, for any tempered (left-) F\o lner sequence $(F_n)_{n\in\N}$ and any $j=1,\ldots , k$, the pointwise ergodic theorem \cite{Lin01} implies  the existence of a full measure set $M_j$ of $\overleftarrow{G}$ such that for all $\xi \in M_j$ one has
\begin{equation} \label{eq: H_0 definition} \frac{1}{\#F_n} \sum_{l\in F_n} 1_{\partial W^{(k)} \cap H_j} (\tau(l)\xi ) \xrightarrow{n\to\infty} \nu(\partial W^{(k)} \cap H_j) > 0.
\end{equation}
In particular, for each $\xi \in M_j$ there exist infinitely many $l \in G$ such that $\tau(l) \xi  \in \partial W^{(k)} \cap H_j$, so that $\tau(l) \in S_j^\xi$. 
Of course, $\bigcap_{j=1}^k M_j$ is still a full measure set. 
As $\bigcap_{j=1}^k M_j \tm H_0$, we obtain $\nu(H_0)=1$ as desired. 

For later use, we also choose a full measure set $M_0\subseteq\overleftarrow{G}$ such that 
\begin{equation} \label{eq: M_0 definition} \frac{1}{\#F_n} \sum_{l\in F_n} 1_{\mathrm{int}(W^{(k)})} (\tau(l)\xi ) \xrightarrow{n\to\infty} \nu(\mathrm{int}(W^{(k)})    > 0
\end{equation}
and let 
\begin{equation} \label{eq: def hat H_0}
\widehat{H}_0 = \bigcap_{j=0}^k M_j.
\end{equation}\medskip

It thus remains to verify assertion (3) about the structure of the invariant measures of $(\overline{\cO(x_{W^{(k)}}}),G)$ in Theorem~\ref{thm: existence of k-1 Toeplitz arrays}. 
Proposition~\ref{prop: k-self similar with total order implies k+1 elements in fiber} implies that  for every $\xi\in H_0$ we have
$$
\beta^{-1}(\{\xi\}) = \{x^\xi_1 ,\ldots x^\xi_{k+1}\},
$$
where $x^\xi_j\in\{0,1\}^G$ is defined by 
$$
x_j^\xi(g)=1 \quad \Longleftrightarrow \quad \tau(g) \in \mathrm{int}(W^{(k)})\xi^{-1} \cup \bigcup_{i=j}^k S_i^\xi $$
for $j=1,\ldots , k+1$. If we let 
\begin{align*}
 W_j^{(k)} 
  \ = \ \mathrm{int}(W^{(k)}) \cup\bigcup_{i=j}^k(\partial W^{(k)} \cap H_i),
\end{align*}
then we see that 
$$
x_j^\xi = x_{W_j^{(k)}\xi^{-1}}$$
for $j=1,\ldots,k+1$. Hence, for every $\xi\in H_0$, the respective fibre is given by 
$$
  \beta^{-1}(\{\xi\})\ = \ \left\{ x_{W^{(k)}_1\xi^{-1}}, \ldots, x_{W^{(k)}_{k+1}\xi^{-1}}\right\}.
$$
Our aim now is to show that for each $j=1,\ldots k+1$ the mapping $$\phi_j : H_0 \to \overline{\cO(x_{W^{(k)}})} , \quad \xi\mapsto x_{W^{(k)}_j\xi^{-1}}$$ defines a measurable function on $H_0$, whose image in $\overline{\cO(x_{W^{(k)}})}$ is $G$-invariant and supports an ergodic $G$-invariant measure. The latter is obtained by projection the Haar measure $\nu$ on $\overleftarrow{G}$ to $\overline{\cO(x_{W^{(k)}})}$ via $\phi_j$. \medskip

We first ensure the measurability of the mappings $\phi_j$. 
\begin{lemma}\label{Borel_map}
For every $1\leq j\leq k+1$, the map $\phi_j:H_0\to \overline{O_G(x_{W^{(k)}})}=X$ defined above 
is Borel.
\end{lemma}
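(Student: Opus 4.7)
The plan is to exploit the fact that $X \subseteq \{0,1\}^G$ inherits its Borel structure from the product topology on $\{0,1\}^G$, together with the countability of $G$. Since $G$ is countable, the Borel $\sigma$-algebra on $\{0,1\}^G$ coincides with the product $\sigma$-algebra. Hence a map $\phi_j: H_0 \to X$ will be Borel if and only if each coordinate projection $\xi \mapsto \phi_j(\xi)(g)$ is Borel measurable for every $g \in G$. This reduction is the first step I would take.

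For the second step, I would compute these coordinate projections explicitly. By the definition of $\phi_j$ and of a model set array (see \eqref{eq: def of model set array}), for every $\xi \in H_0$ and $g \in G$ one has
$$ \phi_j(\xi)(g) = x_{W_j^{(k)} \xi^{-1}}(g) = 1 \iff \tau(g) \in W_j^{(k)} \xi^{-1} \iff \tau(g) \xi \in W_j^{(k)}. $$
Hence $\xi \mapsto \phi_j(\xi)(g)$ factors as $\xi \mapsto \tau(g) \xi \mapsto 1_{W_j^{(k)}}(\tau(g)\xi)$. Since left multiplication by $\tau(g)$ is continuous on $\overleftarrow{G}$, it will remain only to verify that $W_j^{(k)}$ is a Borel subset of $\overleftarrow{G}$.

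For the third and final step, I would unpack the definition $W_j^{(k)} = \mathrm{int}(W^{(k)}) \cup \bigcup_{i=j}^k(\partial W^{(k)} \cap H_i)$. The first set is open by construction. The second is a finite union of sets of the form $\partial W^{(k)} \cap H_i$, each of which is closed since $\partial W^{(k)}$ is closed and $H_i$ is clopen (being a finite union of cylinders of level $L$). Therefore $W_j^{(k)}$ is Borel, which will complete the argument. There is no substantial obstacle here; the only point of care is the initial identification of the Borel structure on $X$ with the product $\sigma$-algebra, which relies on the countability of $G$.
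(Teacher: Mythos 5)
Your proof is correct, but it takes a genuinely different and more elementary route than the paper's. You reduce Borel measurability to coordinatewise measurability (legitimate, since $G$ is countable and the Borel $\sigma$-algebra on $\{0,1\}^G$ is the product $\sigma$-algebra, which passes to the closed subspace $X$), observe via \eqref{eq: def of model set array} that $\phi_j(\xi)(g)=1$ iff $\tau(g)\xi\in W_j^{(k)}$, and conclude from the continuity of left translation together with the fact that $W_j^{(k)}=\mathrm{int}(W^{(k)})\cup\bigcup_{i=j}^k(\partial W^{(k)}\cap H_i)$ is Borel (an open set union a closed set, as each $H_i$ is clopen). The paper instead exhibits $\phi_j$ as a pointwise limit of maps $\phi_{n,j}$ built from the level-$n$ approximations $W_{n,j}^{(k)}$ (unions of cylinders involving $Z_n$); each $\phi_{n,j}$ takes finitely many values with cylinder preimages, hence is Borel, and the bulk of the paper's proof is the four-case verification that $\phi_{n,j}(\xi)(g)\to\phi_j(\xi)(g)$. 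Your argument is shorter and avoids that case analysis entirely; the paper's approach yields the extra (unused, but structurally natural) information that $\phi_j$ is a pointwise limit of locally constant maps adapted to the cylinder filtration. Both establish the lemma; no gap in yours.
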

\begin{proof}
Let $n\in\N$ and $a_{n,1},\ldots, a_{n,k_n}\in G$ be such that $[a_{n,1}]_n,\ldots, [a_{n,k_n}]_{n}$  are  the different cylinder sets of level $n$ (where $k_n=[G:\Gamma_n]$). For every $1\leq j\leq k+1$, define 
$$W_{n,j}^{(k)}=\kl\mathrm{int}(W^{(k)})\setminus\bigcup_{i=1}^{j-1} (H_i \cap Z_n)\kr\cup \bigcup_{i=j}^k\left(H_i\cap Z_n\right),$$
and for every $\xi\in H_0$ and $g\in G$ we set $$\phi_{n,j}(\xi) = x_{W_{n,j}^{(k)}\psi_n(\xi)^{-1}},$$ which means that 
$$\phi_{n,j}(\xi)(g) = 1 \iff \tau(g) \in W_{n,j}^{(k)}\psi_n(\xi)^{-1}.$$

Since  $\phi_{n,j}(H_0)=\left\{x_{W_{n,j}^{(k)}a_{n,s}^{-1}}: 1\leq s\leq k_n\right\}$ and
$$(\phi_{n,j})^{-1}\left(\left\{x_{W_{n,j}^{(k)}a_{n,s}^{-1}}\right\}\right)=[a_{n,s}]_n,$$ for every $1\leq s\leq k_n$ and $n\in \N$, the maps $\phi_{n,j}$ are Borel.
It is therefore enough to show that $\lim_{n\to \infty}\phi_{n,j}(\xi)=\phi_j(\xi)$, for every $\xi\in H_0$. 

Let $\xi\in H_0$ and $g\in G$.
There are four cases to distinguish:
\begin{enumerate}
    \item $\phi_j(\xi)(g)=1$ and $\tau(g)\xi \in \partial W^{(k)}$.
    \item $\phi_j(\xi)(g)= 1 $ and $\tau(g) \xi\notin \partial W^{(k)}$.
    \item $\phi_j(\xi)(g)=0$ and $\tau(g)\xi \in \partial W^{(k)}$. 
    \item $\phi_j(\xi)(g)=0$ and $\tau(g)\xi\notin \partial W^{(k)}$.
\end{enumerate}
In case (1), we see that $\tau(g)\xi \in \bigcup_{i=j}^k(\partial W^{(k)} \cap H_i)$.
Let $j\leq i \leq k$ such that $\tau(g)\xi \in \partial W^{(k)}\cap H_i$.
Due to Lemma~\ref{lem: alternative chara W and partial W}, this implies for every $n\geq L$ that $\tau(g)\xi\in Z_n \cap H_i$.
Now, we have $[\tau(g)\xi]_n = [\tau(g)\psi_n(\xi)]_n$.
Since $Z_n\cap H_i$ is a union of cylinder sets of level $n$, this also implies $\tau(g) \psi_n(\xi) \in Z_n\cap H_i \tm W_{n,j}^{(k)}$.
Therefore, we have $\phi_{n,j}(\xi)(g) = 1$ for all $n\geq L$, which shows that $\lim_{n\to \infty}\phi_{n,j}(\xi)(g)=\phi_j(\xi)(g)$ in this case.\\
In case (2) we obtain $\tau(g)\xi \in \mathrm{int}(W^{(k)})$.
Choose $m\in \N$ such that $[\tau(g)\xi]_m\tm \mathrm{int}(W^{(k)})$.
In particular, we have $[\tau(g)\xi]_m\cap \partial W_{\mathrm{perf}} = [\tau(g)\xi]_m \cap \partial W^{(k)} = \emptyset$.
Thus, Lemma~\ref{lem: chara nonempty intersect with W and complement} yields $[\tau(g)\xi]_m\cap Z_m=\emptyset$.
Altogether, we obtain for all $n\geq m$ that 
$$ \tau(g)\psi_n(\xi) \in [\tau(g)\xi]_m \tm \mathrm{int}(W^{(k)}) \setminus Z_m \tm W_{n,j}^{(k)},$$
so that $\phi_{n,j}(\xi)(g) = 1$. This again shows $\lim_{n\to \infty}\phi_{n,j}(\xi)(g)=\phi_j(\xi)(g)$.\\
In case (3) we deduce $\tau(g)\xi\in \bigcup_{i=1}^{j-1}(\partial W^{(k)}\cap H_i)$ and can proceed in an analougous way as in (1) to show $\phi_{n,j}(\xi)(g) = 0$ for all $n\geq L$. Finally, in case (4) we obtain $\tau(g)\xi \in \overleftarrow{G}\setminus W^{(k)}$ which is analogous to (2).

In conclusion, we have shown that $\phi_j:H_0\to X$ is a pointwise limit of Borel maps, so that $\phi_j$ itself is Borel.
\end{proof}

Now we are in the position to complete the proof of Theorem~\ref{thm: existence of k-1 Toeplitz arrays} with the following
 \begin{proposition}\label{prop: measure theoretic chara of W^(k)}
    The dynamical system $\overline{O_G(x_{W^{(k)}})}=X$ has exactly $k+1$ ergodic measures $\mu_1,\ldots, \mu_{k+1}$, where $\mu_j$ is supported on the set
   $$
   \Omega_j = \phi_j\left(\widehat{H}_0\right) . 
   $$
   Moreover, $(X, \mu_j,G)$ is measure-theoretically conjugate to $(\overleftarrow{G}, \nu,G)$, for every $1\leq j\leq k+1$. 
 \end{proposition}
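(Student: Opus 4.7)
My plan is to realize each candidate measure $\mu_j$ as the pushforward of the Haar measure under the selector $\phi_j$, and then use unique ergodicity of the odometer together with a Rokhlin disintegration to force any invariant measure to lie in the convex hull of the $\mu_j$'s.

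First I would verify that each $\phi_j$ is $G$-equivariant. A direct computation from the definitions of the shift action and the model-set array gives $g\cdot x_W=x_{W\tau(g)^{-1}}$ for any window $W$, and therefore
$$g\cdot \phi_j(\xi)=g\cdot x_{W_j^{(k)}\xi^{-1}}=x_{W_j^{(k)}\xi^{-1}\tau(g)^{-1}}=x_{W_j^{(k)}(\tau(g)\xi)^{-1}}=\phi_j(g\xi).$$
Since the sets $M_j$ in \eqref{eq: H_0 definition}--\eqref{eq: M_0 definition} come from the pointwise ergodic theorem, they may be assumed $G$-invariant, so $\widehat{H}_0$ is $G$-invariant, and $\Omega_j=\phi_j(\widehat{H}_0)$ is a $G$-invariant Borel set (using Lemma~\ref{Borel_map}). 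By Proposition~\ref{prop: k-self similar with total order implies k+1 elements in fiber} one has $\beta\circ\phi_j=\mathrm{id}$ on $\widehat{H}_0$, so $\phi_j\colon\widehat{H}_0\to\Omega_j$ is a Borel bijection with Borel inverse $\beta|_{\Omega_j}$, and moreover the sets $\Omega_1,\dots,\Omega_{k+1}$ are pairwise disjoint (on a common $\xi$, the fiber elements $x_1^\xi,\ldots,x_{k+1}^\xi$ are distinct, and elements of distinct $\Omega_j$ are already separated by $\beta$ when they sit over different $\xi$).

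Next I define $\mu_j:=(\phi_j)_*\nu$. By equivariance, $\mu_j\in\mathcal{M}(X,G)$, and $\phi_j$ realizes a measure-theoretic conjugacy between $(\overleftarrow{G},\nu,G)$ and $(X,\mu_j,G)$. This immediately gives the ``moreover'' assertion of the proposition and yields ergodicity of each $\mu_j$, as $\nu$ is ergodic (unique ergodicity of the odometer). Distinctness of the $\mu_j$ follows from disjointness of their supports $\Omega_j$.

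Finally, to show these are the \emph{only} ergodic measures, let $\mu\in\mathcal{M}(X,G)$. Unique ergodicity of $(\overleftarrow{G},G)$ forces $\beta_*\mu=\nu$, so disintegrating $\mu$ along $\beta$ yields a Borel family $(\mu_\xi)_{\xi\in\overleftarrow{G}}$ with $\mu_\xi$ supported on $\beta^{-1}(\{\xi\})$ and $\mu=\int\mu_\xi\,d\nu(\xi)$. For $\nu$-almost every $\xi$ the fiber equals $\{x_1^\xi,\ldots,x_{k+1}^\xi\}$ (by the full-measure nature of $H_0$ established in \eqref{eq: H_0 definition}), so one may write $\mu_\xi=\sum_{j=1}^{k+1}p_j(\xi)\delta_{x_j^\xi}$ with measurable weights $p_j\geq 0$ summing to $1$. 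The equivariance identity $g\cdot x_j^\xi=x_j^{g\xi}$ together with $G$-invariance of $\mu$ gives $p_j(g\xi)=p_j(\xi)$ for all $g\in G$ and $\nu$-a.e.\ $\xi$; since $G$ is countable, each $p_j$ is $G$-invariant on a full-measure set, and ergodicity of $\nu$ implies $p_j\equiv c_j$ is $\nu$-a.e.\ constant. Hence $\mu=\sum_j c_j\mu_j$, so the extreme points of $\mathcal{M}(X,G)$ are exactly $\mu_1,\dots,\mu_{k+1}$. The main subtlety is making the ``selector-to-disintegration'' identification rigorous in the non-commutative odometer setting; the key point that makes this clean is that $G$-equivariance of $\phi_j$ matches the canonical labeling of fiber elements dictated by the ordering of similarity classes.
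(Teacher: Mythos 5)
Your proof is correct, and while the first half (defining $\mu_j$ as the push-forward of $\nu$ under the equivariant Borel selector $\phi_j$, deducing invariance, ergodicity, the conjugacy with $(\overleftarrow{G},\nu,G)$, and distinctness from disjointness of the $\Omega_j$) matches the paper's argument, you diverge from it in two places. First, for measurability of $\Omega_j=\phi_j(\widehat{H}_0)$ the paper gives an explicit description $\Omega_j=\Omega\cap\eta^{-1}(\{d_j\})$ via the densities $\eta(x)=\lim_n \frac{1}{\#F_n}\sum_{g\in F_n}x(g)$, which take pairwise distinct values $d_j$ on the different branches; your route instead rests on the Lusin--Suslin theorem (an injective Borel image of a Borel set is Borel), which works but should be named, since images of Borel sets under Borel maps are in general only analytic. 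Second, and more substantially, for the statement that $\mu_1,\ldots,\mu_{k+1}$ exhaust the ergodic measures the paper simply cites \cite{HLSY21} or \cite{BreHaJa25}, whereas you give a self-contained argument: unique ergodicity of the odometer forces $\beta_*\mu=\nu$, the Rokhlin disintegration over the a.e.\ finite fibres $\{\phi_1(\xi),\ldots,\phi_{k+1}(\xi)\}$ yields weights $p_j(\xi)=\mu_\xi(\Omega_j)$, equivariance of the disintegration and $G$-invariance of $\Omega_j$ make each $p_j$ invariant, and ergodicity of $\nu$ forces them constant, so $\mu=\sum_j c_j\mu_j$. This is a genuine gain in self-containedness and transparency. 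Two small points you gloss over: the sets $M_j$ produced by the pointwise ergodic theorem for a left F{\o}lner sequence need not be literally $G$-invariant, so you should replace $\widehat{H}_0$ by $\bigcap_{g\in G} g\widehat{H}_0$ (still of full measure since $G$ is countable) before claiming invariance of $\Omega_j$; and the measurability of $\xi\mapsto p_j(\xi)$ is cleanest if you note that $p_j(\xi)=\mu_\xi(\Omega_j)$ with $\Omega_j$ a fixed Borel set, which is measurable by the defining property of the disintegration. Neither is a real gap.
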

    \begin{proof}
     Let $1\leq j\leq k+1$. We first note that $\Omega_j$ is a measurable set. This can be seen as follows: 
     first, we have that $\Omega=\bigcup_{j=1}^{k+1} \Omega_j=\beta^{-1}\left(\widehat{H}_0\right)$ is measurable as the preimage of a measurable set under the continuous mapping $\beta$. Moreover, due to \eqref{eq: H_0 definition}, \eqref{eq: M_0 definition}, and the definition of the sequences $x_{W^{(k)}_j\xi^{-1}}$, the limits 
     $$
     \eta(x) = \lim_{n\to\infty} \frac{1}{\#F_n} \sum_{g\in F_n} x(g) 
     $$
     exist for all $x\in\Omega$. Note here that any $x\in\Omega$ is of the form $x=x_{W^{(k)}_j\xi^{-1}}$ for some $j=1,\ldots k+1$ and $\xi\in\overleftarrow{G}$ and we have 
     $$
     \eta\left(x_{W^{(k)}_j\xi^{-1}}\right) = \nu\left(\mathrm{int}(W^{k})\cup\bigcup_{i=j}^k (\partial W^{(k)}\cap H_j)\right) =: d_j
     $$
     due to \eqref{eq: H_0 definition} and \eqref{eq: M_0 definition}. For this reason, we have 
     $$
     \Omega_j= \Omega \cap \eta^{-1}(\{d_j\}) ,
     $$ 
     which is clearly measurable since $\eta$ is a measurable function.
     Lemma~\ref{Borel_map} now implies that $\beta_{|\Omega_j}: \Omega_j \to \widehat{H}_0$ is a bi-measurable bijective map with inverse $\phi_j$. Moreover, since 
     $$
     g\phi_j(\xi)=gx_{W^{(k)}_j\xi^{-1}}=x_{W^{(k)}_j\xi^-1g^-1}=\phi_j(g\xi),
     $$
     this map is equivariant and $\Omega_j$ is $G$-invariant. Consequently, the measure $\mu_j(A)=\nu(\beta(A\cap \Omega_j))$ is a well defined invariant probability measure on the Borel subsets of $X$. Moreover, this is an ergodic measure because $\nu$ is ergodic.    

     The measures $\mu_1,\ldots, \mu_{k+1}$ are different, because their supports are disjoint. 
     From Corollary~\ref{cor: fiber structure for W^(k)} and \cite[Lemma 3.6]{HLSY21} or \cite[Prop. 3.1 and Add. 3.2]{BreHaJa25},  we finally deduce that the ergodic measures of $X$ are exactly $\mu_1,\ldots, \mu_{k+1}$. 
    \end{proof}

\begin{remark} We note that the structure of the system $(X,G)$ in Proposition \ref{prop: measure theoretic chara of W^(k)} implies that its topological sequence entropy is exactly equal to $\log(k+1)$. This can be seen as follows: as a consequence of \cite[Proposition 3.7]{HLSY21}, $(X,G)$ has non-trivial $k+1$-IN-tuples, but only trivial $k+2$-IN-tuples (as a consequence of the fact that an $n$-IN-tuple is an $n$-regionally proximal tuple, see for instance \cite[Proposition 2.6]{GoLeMu25}).
This in turn implies, joint with the last part of Proposition \ref{prop: measure theoretic chara of W^(k)} and \cite[Theorem 4.4]{HuYe09}, that $X$ is a system having zero topological entropy but having topological sequence entropy equal to $\log(k+1)$. The same remark applies when IN-tuples are replaced by IT-tuples.
\end{remark}



\section{Measure-theoretically $k:1$ Toeplitz flows with infinite fibers} \label{sect: infinite max rank}

Our final goal now is to modify the windows $W^{(k)}$ constructed in the previous section in order to obtain windows $\widetilde{W}^{(k)}$ such that $\partial \widetilde{W}^{(k)} = \partial W^{(k)} = \partial W_{\mathrm{perf}}$ and for all $\xi \in\overleftarrow{G}$ the fibres of the factor map $\widetilde\beta:\overline{O_G(x_{\widetilde{W}^{(k)}})}\to\overleftarrow{G}$ are given by 
\begin{equation} \label{eq: new fibers}
\widetilde\beta^{-1}(\{\xi\}) = \left\{\widetilde{x}_1,\ldots,\widetilde{x}_{k+1}\right\} \cup \left\{\widetilde{x}_{k,l} \mid \tau(l) \in S_k^\xi\right\} ,
\end{equation}
where 
\begin{equation}\label{eq:tilde-x}
   \widetilde{x}_j(g)=1 \iff \tau(g) \in \mathrm{int}(\widetilde{W}^{(k)})\xi^{-1} \cup \bigcup_{i=j}^k S_i^\xi, 
\end{equation}
and the sequences $\widetilde{x}_{k,l}$ are defined by 
$$
\{g\in G: \widetilde{x}_{k,l}(g)=1\}=\{g\in G: \widetilde{x}_k(g)=1\}\setminus\{l\}.
$$
Here, we let $S_j^\xi = (\partial W^{(k)}\cap H_j)\xi^{-1}\cap \tau(G) = (\partial \widetilde{W}^{(k)}\cap H_j)\xi^{-1}\cap \tau(G)$ (see Corollary~\ref{cor: k similarity class structure}.\\
In particular, the dynamical system $(\ol{O_G(x_{\widetilde{W}^{(k)}})},G)$ has countably infinite maximal rank.
Note that if some of the sets $S_j$ are empty, then some of the fibre elements in \eqref{eq: new fibers} may coincide. In particular, if $\xi$ is non-critical, then $\beta^{-1}(\{\xi\}) = \{x_{\widetilde{W}^{(k)}\xi^{-1}}\}$.
However, we will show that for all $\xi$ from a full-measure subset $\widetilde{H}_0$, all the arrays included in \eqref{eq: new fibers} are distinct, so that the fiber $\widetilde{\beta}^{-1}(\{\xi\})$ is infinite.\\
Since the additional new elements $\widetilde{x}_{k,l}$ in the fibers of $\widetilde\beta$ differ only in one coordinate from $\widetilde{x}_k$, they are all asymptotic to each other. In particular, this means that our modification does not concern the measure-theoretic structure of the resulting systems. 
Hence, this will prove Theorem~\ref{thm: existence of k-1 Toeplitz arrays with infinite max rank}. 

We adopt all the notation and definitions from the previous section.
In order to construct $\widetilde{W}^{(k)}$, we choose one cylinder $E_n\subseteq X_n$ of level $n$ for each $n\in \N$ in such a way that 
\begin{equation}\label{eq: boundary of E_n}
    \partial \kl \bigcup_{n\in N_k} (X_n\setminus E_n)\kr = \partial \kl \bigcup_{n\in N_k} E_n\kr = \partial W_{\mathrm{perf}}.
\end{equation}
This can be achieved due to Remark~\ref{rem: number of retained cylinders}. 
In view of Lemma~\ref{lem: modified window boundary} we let $\widetilde{X}_n^{(0)} = X_n$ and $\widetilde{X}_n^{(1)} = X_n \setminus E_n$ and define the sequence $(s_n)_{n\in\N} \in \{0,1\}^{\N}$ via $s_n = 1 \iff n\in N_k$.
Now, we define
$$ \widetilde{W}^{(k)}= \partial W_{\mathrm{perf}} \cup \bigcup_{j=1}^k \bigcup_{n\in N_{[1,j]}} (H_j\cap \widetilde{X}_n^{(s_n)}).$$
Note that, as can be seen from the construction, the choice of the $E_n$ for $n\notin N_k$ is actually irrelevant and is only made for formal reasons.\smallskip

Lemma~\ref{lem: modified window boundary} now yields that $\widetilde{W}^{(k)}$ is proper, generic and irregular with $\partial \widetilde{W}^{(k)} = \partial W_{\mathrm{perf}} = \partial W^{(k)}$. In order to ensure the irredundancy of $\widetilde{W}^{(k)}$, we assume that the set $H_k$ in the construction of $W^{(k)}$ in the last section is chosen such that it contains at least two cylinder sets of level $L$ contained in $Z_L$ (this is always possible by choosing $L$ large enough). In this case, given any $n\geq L$, there exist at least two cylinder sets $C_i$, $i=1,2$, in $\cC_n$ which intersect the boundary of $W^{(k)}$ and satisfy $\cM_n(W^{(k)},C_i)=1$. Since at most one cylinder set of level $n+1$ is removed from $W^{(k)}$ in the construction of $\widetilde{W}^{(k)}$, we must have either $\cM_n(\widetilde{W}^{(k)},C_1)=1$ or $\cM_n(\widetilde{W}^{(k)},C_2)=1$. This allows to prove the irredundancy of $\widetilde{W}^{(k)}$ in the same way as in the proof of Lemma~\ref{lemma: irredundant}. 

Further, since we only remove cylinder sets of levels $n\in N_k$,  we immediately obtain that 
\begin{equation}\label{eq: new window structure 3}
\widetilde{W}^{(k)} \cap H_j  =  W^{(k)}\cap H_j
\end{equation}
for all $1\leq j < k$. 
Let $\xi\in \overleftarrow{G}$.
The following lemma aims to describe the relation $\preccurlyeq_\xi$ on $\partial(\widetilde{W}^{(k)}\xi^{-1})\cap \tau(G)$
\begin{lemma}\label{lem: order structure tilde(W)}
    Let $l_1,l_2,l_3\in G$ be distinct elements such that $\tau(l_i)\in \partial (\widetilde{W}^{(k)}\xi^{-1})$.
    Let $i_1,i_2,i_3\in \{1,\ldots,k\}$ such that $\tau(l_j)\in S_{i_j}^\xi$, with $S_i^\xi = (\partial \widetilde{W}^{(k)}\cap H_i)\xi^{-1}\cap \tau(G)$ as above.
    \begin{enumerate}
        \item If $i_1\leq i_2<k$, then $\tau(l_1)\preccurlyeq_\xi \tau(l_2)$.
        \item If $i_1<i_2\leq k$, then $\tau(l_1)\prec_\xi \tau(l_2)$.
        \item If $i_1=i_2 =k$, then we have neither $\tau(l_1)\preccurlyeq_\xi \tau(l_2)$ nor $\tau(l_2)\preccurlyeq_\xi \tau(l_1)$ -- both elements are not comparable via the order $\preccurlyeq_\xi$.
        \item If $i_1=i_2=i_3=k$, then there exists $\varepsilon>0$ such that
        $$
        \tau(l_1)^{-1}\cdot\left(B_\varepsilon(\tau(l_1))\cap \widetilde{W}^{(k)} \xi^{-1}\right) 
        \subseteq \bigcup_{i\in\{2,3\}}\tau(l_i)^{-1} \cdot\kl B_{\varepsilon}(\tau(l_i))\cap \widetilde{W}^{(k)}\xi^{-1}\kr.$$
    \end{enumerate}
\end{lemma}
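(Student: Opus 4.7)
The approach is to use perfect self-similarity of $W_{\mathrm{perf}}$ (Lemma~\ref{perfectly-self-similar}) together with Lemma~\ref{lemma: nested-union} to parametrize $\widetilde{W}^{(k)}\xi^{-1}$ locally around each point $\tau(l_r)$ in a uniform way, reducing every assertion to comparing the removed cylinders $E_n$, $n\in N_k$, near the points $l_r^*:=\tau(l_r)\xi\in\partial W_{\mathrm{perf}}$. First I would fix a level $m$ large enough that the cylinders $[l_r^*]_m$ are pairwise disjoint, each contained in its respective $H_{i_r}$, that Lemma~\ref{lemma: nested-union} yields common sets $X_n^*:=\tau(l_r)^{-1}([l_r^*]_m\cap X_n\xi^{-1})$ and $\partial^*:=\tau(l_r)^{-1}([l_r^*]_m\cap\partial W_{\mathrm{perf}}\xi^{-1})$ independent of $r$, and such that $l_il_j^{-1}\notin\Gamma_m$ for the distinct indices occurring. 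Taking $\varepsilon=2^{-m}$, no cylinder $E_n$ with $n<m$ can meet $[l_r^*]_m$ (since $l_r^*\in\partial W_{\mathrm{perf}}$ while $E_n\subseteq\mathrm{int}(W_{\mathrm{perf}})$), so only $n\geq m$ contributes to $E_n^{*,r}:=\tau(l_r)^{-1}(B_\varepsilon(\tau(l_r))\cap E_n\xi^{-1})$. Unfolding the definition of $\widetilde{W}^{(k)}$ then gives
$$V_r\ :=\ \tau(l_r)^{-1}\bigl(B_\varepsilon(\tau(l_r))\cap\widetilde{W}^{(k)}\xi^{-1}\bigr)\ =\ \partial^*\cup\bigcup_{n\in N_{[1,i_r]}\setminus N_k}X_n^*\cup\bigcup_{n\in N_{[1,i_r]}\cap N_k}(X_n^*\setminus E_n^{*,r}),$$
so $V_r$ depends on $r$ only through $i_r$ and through the removed cylinders $E_n^{*,r}$.

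Cases (1) and (2) with $i_2<k$ follow from Lemma~\ref{self-similar-k} applied to $W^{(k)}$, since $\widetilde{W}^{(k)}$ and $W^{(k)}$ agree on each $H_j$ with $j<k$. Case (2) with $i_2=k$ reduces directly to the display: the relation $N_{[1,i_1]}\subseteq N_{[1,k-1]}=\N\setminus N_k$ yields $V_1\subseteq V_2$, hence $\tau(l_1)\preccurlyeq_\xi\tau(l_2)$, and strictness is witnessed by a non-empty $X_n^*$ with $n\in N_{[i_1+1,k-1]}$ when $i_1<k-1$, or by $X_n^*\setminus E_n^{*,2}$ for some large $n\in N_k$ when $i_1=k-1$; in both cases Remark~\ref{rem: number of retained cylinders} ensures non-emptiness at every scale. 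For case (3), the equality $\partial\bigcup_{n\in N_k}E_n=\partial W_{\mathrm{perf}}$ supplies some $n\in N_k$ with $n\geq m$ and $E_n\subseteq[l_2^*]_m$; by Lemma~\ref{lemma: nested-union}, the set $\tau(l_1)\tau(l_2)^{-1}E_n$ is a cylinder $C\subseteq X_n\cap[l_1^*]_m$, and $C\neq E_n$ because $l_1l_2^{-1}\notin\Gamma_n$. Hence for any $\eta_0\in E_n$, the element $\eta=\tau(l_1)\tau(l_2)^{-1}\eta_0\in C\subseteq\widetilde{W}^{(k)}$ produces $\zeta=\tau(l_1)^{-1}\eta\xi^{-1}\in V_1$ while $\tau(l_2)\zeta=\eta_0\xi^{-1}\in E_n\xi^{-1}\subseteq\overleftarrow{G}\setminus\widetilde{W}^{(k)}\xi^{-1}$ (using $E_n\subseteq H_k$ and $n\in N_k$), ruling out $\tau(l_1)\preccurlyeq_\xi\tau(l_2)$; the reverse direction follows by symmetry.

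The main substance is case (4). Suppose towards a contradiction that $\zeta\in V_1$ but $\zeta\notin V_2\cup V_3$, and set $h_r:=\tau(l_r)\zeta\xi$. Then $h_1\in\widetilde{W}^{(k)}$ and $h_2,h_3\notin\widetilde{W}^{(k)}$, all three sitting in $[l_r^*]_m\subseteq H_k$. The common-part identification (applied to $W_{\mathrm{perf}}$) forces $h_2,h_3\in W_{\mathrm{perf}}$ as well, so $h_r\notin\widetilde{W}^{(k)}$ must come from $h_r\in\bigcup_{n\in N_k}E_n$; write $h_r\in E_{n_r}$ for $r=2,3$. Then $h_1\in X_{n_2}\cap X_{n_3}$ via Lemma~\ref{lemma: nested-union} applied to $X_n$, and disjointness of distinct $X_n$'s yields $n_2=n_3=:n$. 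Both $h_2$ and $h_3$ thus lie in the single level-$n$ cylinder $E_n$, which is a coset of the closed normal subgroup $\overline{\tau(\Gamma_n)}\subseteq\overleftarrow{G}$; consequently $h_2h_3^{-1}=\tau(l_2)\tau(l_3)^{-1}\in\overline{\tau(\Gamma_n)}$, i.e., $l_2l_3^{-1}\in\Gamma_n\subseteq\Gamma_m$, contradicting our choice of $m$. The main obstacle is to ensure that a single level $m$ (hence a single $\varepsilon$) simultaneously arranges the common-part identification of Lemma~\ref{lemma: nested-union}, the clopen separation of the $H_i$, and the group-theoretic conditions $l_il_j^{-1}\notin\Gamma_m$; once this is in place, the combinatorial essence of (4), that at most one of any three distinct left-shifts of a level-$n$ cylinder can coincide with $E_n$, yields the required inclusion.
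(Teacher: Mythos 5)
Your proof is correct and follows essentially the same route as the paper's (very terse) argument: cases (1)--(2) via \eqref{eq: new window structure 3} and Lemma~\ref{self-similar-k}, case (3) via \eqref{eq: boundary of E_n} and Lemma~\ref{lemma: nested-union}, and case (4) via the observation that only one cylinder per level is removed, so three distinct translates of $E_n$ cannot all be removed. You merely make explicit the local decomposition of $\widetilde{W}^{(k)}$ near the points $\tau(l_r)\xi$ and the coset argument $l_il_j^{-1}\in\Gamma_n$ that the paper leaves implicit.
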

\begin{proof}
    (1) and (2) follow from \eqref{eq: new window structure 3} and Lemma~\ref{self-similar-k}.
    (3) is a consequence of \eqref{eq: boundary of E_n} and Lemma~\ref{lemma: nested-union}.
    (4) follows from the fact that we remove at most one cylinder set of every level, so that we cannot remove corresponding cylinder sets sufficiently close to the $\tau(l_j)$, $j=1,2,3$.
\end{proof}

\begin{proposition}\label{prop: proof of eq: new fibers}
    The modified window $\widetilde{W}^{(k)}$ satisfies (\ref{eq: new fibers}). 
    Moreover, there exists a set $\widetilde{H}_0\tm \overleftarrow{G}$ of full Haar measure such that the fiber $\widetilde{\beta}^{-1}(\{\xi\})$ is infinite for all $\xi\in \widetilde{H}_0$.
\end{proposition}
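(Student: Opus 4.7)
The approach follows the blueprint of Section~\ref{subsect: proof of main theorem}, but two new complications must be handled because of Lemma~\ref{lem: order structure tilde(W)}(3): the singletons $\{\tau(l)\}$ with $\tau(l)\in S_k^\xi$ are pairwise incomparable under $\preccurlyeq_\xi$, so the set $\mathcal{S}$ of similarity classes is no longer totally ordered, and Lemma~\ref{lem: partition yields fiber element} alone no longer enumerates the fiber.

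For the inclusion $\widetilde{\beta}^{-1}(\{\xi\})\subseteq \{\widetilde{x}_1,\dots,\widetilde{x}_{k+1}\}\cup\{\widetilde{x}_{k,l}:\tau(l)\in S_k^\xi\}$, Lemma~\ref{lem: order structure tilde(W)}(1) and~(2) combined with Lemma~\ref{lem: if l_1 in Gamma then l_2} force any $x\in \widetilde{\beta}^{-1}(\{\xi\})$ to take the value~$1$ on an upper segment of the totally ordered chain $S_1^\xi\prec_\xi\dots\prec_\xi S_{k-1}^\xi$, and to vanish on $\bigcup_{j<k}S_j^\xi$ whenever $x(l)=0$ for some $l$ with $\tau(l)\in S_k^\xi$ (using $S_{k-1}^\xi\prec_\xi\{\tau(l)\}$). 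A variant of the argument of Lemma~\ref{lem: if l_1 in Gamma then l_2}, with the inclusion in Lemma~\ref{lem: order structure tilde(W)}(4) replacing~(\ref{eq: self-similarity-order}), shows further that if $x(l_1)=1$ and $x(l_2)=0$ with $\tau(l_1),\tau(l_2)\in S_k^\xi$, then $x(l_3)=1$ for every other $l_3$ with $\tau(l_3)\in S_k^\xi$. These constraints leave exactly the arrays listed in~(\ref{eq: new fibers}).

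For the reverse inclusion, the arrays $\widetilde{x}_j$ with $1\leq j\leq k-1$ are provided by Lemma~\ref{lem: partition yields fiber element} applied to $\mathcal{S}^-=\{S_1^\xi,\dots,S_{j-1}^\xi\}$, whose complement $\mathcal{S}^+$ has minimum $S_j^\xi$. For $\widetilde{x}_k$, $\widetilde{x}_{k+1}$, and each $\widetilde{x}_{k,l_0}$ that lemma no longer applies, and one has to verify the criterion of Lemma~\ref{lem: suff crit being in fiber} directly. The key input is that, by Lemma~\ref{lemma: nested-union} together with the perfect self-similarity of $W_\mathrm{perf}$, the sets $X_n$ and $Y_n$ share a common local structure near every boundary point (after translation to the identity). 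Hence, for any finite $N_0\subseteq\{l:\tau(l)\in S_k^\xi\}$ and sufficiently large $n$, one can find $\delta\in B_\varepsilon(1_{\overleftarrow{G}})$ lying in a common cylinder of $X_n$ (respectively $Y_n$) for every $l\in N_0$ simultaneously. The only caveat is that $\delta$ must avoid the ``bad'' cylinders $(\tau(l)\xi)^{-1}E_m$ removed at levels $m\in N_k$; at each such level there are at most $\#N_0$ such cylinders, and Remark~\ref{rem: number of retained cylinders} ensures that the sets $A_n$ contain enough alternative cylinders to carry out the selection. Lemma~\ref{lem: need only check at extrema} applied to the totally-ordered part of $M_0$ (in $\bigcup_{j<k}S_j^\xi$) takes care of the remaining constraints, and for $\widetilde{x}_{k,l_0}$ one further forces $\delta$ into $(\tau(l_0)\xi)^{-1}E_m$ for an appropriate $m\in N_k$, thereby separating $\tau(l_0)\xi$ from the other $\tau(l)\xi$.

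Finally, the infinite-fiber claim follows from Lindenstrauss' pointwise ergodic theorem for amenable groups applied to the indicator function of $\partial W_\mathrm{perf}\cap H_k$, which has positive Haar measure. This produces a full-measure set $\widetilde{H}_0\subseteq\overleftarrow{G}$ on which $S_k^\xi$ is countably infinite, so that the pairwise distinct $\widetilde{x}_{k,l}$ for $\tau(l)\in S_k^\xi$ yield infinitely many elements of $\widetilde{\beta}^{-1}(\{\xi\})$. The main technical obstacle throughout is the simultaneous open-neighborhood construction needed to verify Lemma~\ref{lem: suff crit being in fiber} for the new fiber elements: one must control a single perturbation $\delta$ that handles every coordinate in $N_0$ at once while still avoiding the finitely many removed cylinders $E_m$ at each level of $N_k$. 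This careful book-keeping constitutes the new technical content beyond the argument in Section~\ref{subsect: proof of main theorem}.
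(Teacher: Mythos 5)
Your proposal is correct and follows essentially the same route as the paper: the upper bound comes from Lemma~\ref{lem: if l_1 in Gamma then l_2} together with the order structure in Lemma~\ref{lem: order structure tilde(W)} (your ``variant'' using part (4) is just the fiber-criterion argument the paper runs through Proposition~\ref{prop: criterion being in fiber} with $N=\{l\}$, $M=\{l_1,l_2\}$), the lower bound uses Lemma~\ref{lem: partition yields fiber element} for the totally ordered part and a direct verification of Lemma~\ref{lem: suff crit being in fiber} for $\widetilde{x}_k$ and $\widetilde{x}_{k,l}$ exploiting that only one cylinder per level is removed, and the full-measure set comes from the pointwise ergodic theorem exactly as in the paper. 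One small point to tighten: your count of ``at most $\#N_0$ bad cylinders per level'' is not covered by $\#A_n\geq 2$ alone when $\#N_0>1$; the correct observation is that for $m$ large the cylinders $[\tau(l)\xi]_m$, $l\in N_0$, are pairwise disjoint, so the single removed cylinder $E_n$ can obstruct at most one of the translated choices, and two alternatives from Remark~\ref{rem: number of retained cylinders} then suffice.
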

\begin{proof}
To show that \eqref{eq: new fibers} is satisfied, let $\xi\in \overleftarrow{G}$ and let $S_j^\xi = (\partial \widetilde{W}^{(k)} \cap H_j)\xi^{-1}\cap \tau(G)$ for $1\leq j\leq k$, so that $\{ S_1^\xi,\ldots,S_k^\xi\}$ is a partition of $\partial (\widetilde{W}^{(k)}\xi^{-1})\cap \tau(G)$ (where some sets might be empty).
Further, for $l\in G$ such that $\tau(l)\in S_k^\xi$ we denote $S_{k,l}^\xi = \{\tau(l)\}$.\\
Suppose that $x$ is an element of $\widetilde{\beta}^{-1}(\{\xi\})$.
If we have $x(g)=0$ for all $g\in G$ such that $\tau(g) \in \partial (\widetilde{W}^{(k)}\xi^{-1})\cap \tau(G)$, then clearly $x = \widetilde{x}_{k+1}$.
Hence, we can assume that there exists a smallest index $j\in \{1,\ldots,k\}$ for which we can find $g\in G$ such that $\tau(g)\in S_j^\xi$ and $x(g)=1$.
If $j<k$, then it follows from Lemma~\ref{lem: order structure tilde(W)} and Lemma~\ref{lem: if l_1 in Gamma then l_2} that $x=\widetilde{x}_j$.\\
Thus, we may assume now $\{\tau(g): x(g)=1\} \tm \mathrm{int}(\widetilde{W}^{(k)}\xi^{-1})\cup S_k^\xi$ and there exists some $l\in G$ such that $\tau(l)\in S_k^\xi$ and $x(l) =1$.
Suppose that there exist distinct elements $\tau(l_1),\tau(l_2)\in S_k^\xi$ such that $x(l_1) = x(l_2)=0$.
In this case, we obtain for the set $N = \{l\}$ and $M = \{l_1,l_2\}$ by Lemma~\ref{lem: order structure tilde(W)} (4) that
$$ B_{\varepsilon}(\xi) \cap \kl \bigcap_{l\in N} \tau(l)^{-1}  \widetilde{W}^{(k)}\setminus \bigcup_{j\in M}\tau(j)^{-1} \widetilde{W}^{(k)}\kr$$
is empty for sufficiently small $\varepsilon>0$.
Therefore, Proposition~\ref{prop: criterion being in fiber} yields $x\notin \widetilde{\beta}^{-1}(\{\xi\})$, a contradiction.
This shows that $\#\kl S_k^\xi\setminus \{\tau(g):x(g)=1\}\kr \in\{0,1\}$.
One case implies $x = \widetilde{x}_k$ whereas the other $x= \widetilde{x}_{k,j}$, where $\tau(j)$ is the unique element of $S_k^\xi\setminus \{\tau(g):x(g)=1\}$.
This shows that $\widetilde{W}^{(k)}$ satisfies \eqref{eq: new fibers}.

Now let $$\widetilde{H}_0=\{\xi\in \overleftarrow{G}: (\partial \widetilde{W}^{(k)} \cap H_j)\xi^{-1}\cap \tau(G) \text{ is infinite for all } 1\leq j\leq k\}.$$
We have seen (after Corollary~\ref{cor: fiber structure for W^(k)}) that the pointwise ergodic theorem yields $\nu(\widetilde{H}_0) =1$. 
By definition, if $\xi\in \widetilde{H}_0$, then all the sets $S_j^\xi$, $1\leq j\leq k$ and $S_{k,l}^\xi$, $\tau(l)\in S_k^\xi$ defined above are non-empty.
It readily follows that all the elements $\widetilde{x}_j$ and $\widetilde{x}_{k,l}$ are distinct and the set $\{\widetilde{x}_{k,l}: \tau(l)\in S_k^\xi\}$ is (countably) infinite.
We claim that for $\xi\in \widetilde{H}_0$, the inclusion in \eqref{eq: new fibers} is an equality, which would conclude the proof of this Proposition and hence of Theorem~\ref{thm: existence of k-1 Toeplitz arrays with infinite max rank}.

By the above, we obtain that the set $\mathcal{S}$ of similarity classes w.r.t.\ $\xi$ is given by $\mathcal{S} =\{S_1^\xi,\ldots, S_{k-1}^\xi\} \cup \{S_{k,l}^\xi: \tau(l)\in S_k^\xi\}$.
Note that the similarity classes $S_{k,l}^\xi$ are all distinct by Lemma~\ref{lem: order structure tilde(W)} (3). 
We still have $S_{i_1}^\xi\prec_\xi S_{i_2}^\xi$ if $i_1<i_2<k$ and also $S_j^\xi\prec_\xi S_{k,l}^\xi$ whenever $j<k$ and $\tau(l)\in S_k^\xi$ by  Lemma~\ref{lem: order structure tilde(W)} (2). 

The fact that the elements $\widetilde{x}_j$, defined in \eqref{eq:tilde-x} with $j\neq k$, are in $\widetilde{\beta}^{-1}(\{\xi\})$ follows from Lemma~\ref{lem: partition yields fiber element} as follows:
If $j<k$, choose $\mathcal{S}^{-} = \{S_1^\xi,\ldots,S_{j-1}^\xi\}$ and $\mathcal{S}^{+} = \{S_j^\xi,\ldots,S_{k-1}^\xi\}\cup\{S_{k,l}^\xi:\tau(l)\in S_k^\xi\}$.
If $j=k+1$, then let $\mathcal{S}^{-}=\mathcal{S}$ and $\mathcal{S}^{+}=\emptyset$.

Now let $l_1\in G$ be such that $\tau(l_1)\in S_k^\xi$.
To show that $\widetilde{x}_{k,l_1}$ lies in the fiber, let $N,M\tm G$ be non-empty finite sets such that $\tau(N)\cup \tau(M)\tm \partial(\widetilde{W}^{(k)}\xi^{-1})$, $\widetilde{x}_{k,l_1}(g)=1$ for all $g\in N$ and $\widetilde{x}_{k,l_1}(g) = 0$ for all $g\in M$.
Then, clearly $M=\{l_1\}$.
However, the construction (in particular again  the fact that at most one cylinder set of every level is removed) yields that 
\begin{equation}\label{eq: tilde(x)_(k,l) in fiber}
 B_\varepsilon(\xi) \cap \bigcap_{l\in N} \tau(l)^{-1}\widetilde{W}^{(k)}\setminus  \tau(l_1)^{-1}\widetilde{W}^{(k)}
\end{equation}
has non-empty interior for every $\varepsilon>0$. 
Thus, $\widetilde{x}_{k,l_1}\in \widetilde{\beta}^{-1}(\{\xi\})$ follows from Lemma~\ref{lem: suff crit being in fiber}.
Note that since $l_1$ with $\tau(l_1)\in S_k^\xi$ was arbitrary, we also obtain from \eqref{eq: tilde(x)_(k,l) in fiber} that $B_{\varepsilon}(\xi) \cap \bigcap_{l \in N} \tau(l)^{-1} \widetilde{W}^{(k)}$ has non-empty interior for every $\varepsilon>0$ and finite subset $N$ such that $\tau(N)\tm \partial(\widetilde{W}^{(k)}\xi^{-1})$.
Hence, Lemma~\ref{lem: suff crit being in fiber} also implies $\widetilde{x}_k\in \widetilde{\beta}^{-1}(\{\xi\})$, finishing the proof.
\end{proof}

\end{document}